\renewcommand{\a}{\alpha}
\renewcommand{\b}{\beta}
\newcommand{\g}{\gamma}
\newcommand{\G}{\Gamma}
\renewcommand{\d}{\delta}
\newcommand{\D}{\Delta}
\renewcommand{\l}{\lambda}
\newcommand{\n}{\nu}
\renewcommand{\o}{\omega}
\renewcommand{\O}{\Omega}
\renewcommand{\r}{\rho}
\newcommand{\s}{\sigma}
\renewcommand{\SS}{\Sigma}
\renewcommand{\t}{\tau}
\newcommand{\e}{\varepsilon}
\newcommand{\f}{\varphi}
\newcommand{\F}{\Phi}
\newcommand{\x}{\xi}
\newcommand{\z}{\zeta}
\newcommand{\Fc}{{\mathcal F}}
\newcommand{\Oc}{{\mathcal O}}
\newcommand{\C}{{\mathbb C}}
\newcommand{\N}{{\mathbb N}}
\newcommand{\Z}{{\mathbb Z}}
\newtheorem{theorem}{Theorem}[section]
\newtheorem{proposition}[theorem]{Proposition}
\newtheorem{lemma}[theorem]{Lemma}
\newtheorem{corollary}[theorem]{Corollary}
\newtheorem{property}[theorem]{Property}
\newtheorem{fact}[theorem]{Fact}
\theoremstyle{definition}
\newtheorem{definition}[theorem]{Definition}
\theoremstyle{remark}
\newtheorem{remark}[theorem]{Remark}
\newtheorem{examples}[theorem]{Examples}
\begin{document}

\title[Growth behaviors in the range $e^{r^\a}$]
      {Growth behaviors in the range $e^{r^\a}$}
\author[Brieussel]{J\'er\'emie Brieussel \\ \\ Institut de Math\'ematiques \\ Rue Emile-Argand 11 \\ CH-2000 Neuch\^atel \\ Switzerland}
\email{jeremie.brieussel@gmail.com}

\date{June 27, 2012. Supported by Swiss NSF grant 20-126689}

\begin{abstract}
For every $\a \leq \b$ in a left neighborhood $[\a_0,1]$ of $1$, a group $G(\a,\b)$ is constructed, the growth function of which satisfies $\limsup \frac{\log \log b_{G(\a,\b)}(r)}{\log r}=\a$ and $\liminf \frac{\log \log b_{G(\a,\b)}(r)}{\log r}=\b$. When $\a=\b$, this provides an explicit uncountable collection of groups with growth functions strictly comparable. On the other hand, oscillation in the case $\a < \b$ explains the existence of groups with non comparable growth functions. Some period exponents associated to the frequency of oscillation provide new group invariants. 
\end{abstract}

\maketitle

\section{Introduction}

The growth function $b_{\G,S}(r)=|S^r|$ of a group $\G$ with finite generating set $S$ was introduced by Milnor \cite{Mil} in relation with Riemannian geometry. The class $b_\G(r)$ of $b_{\G,S}(r)$ under the equivalence relation associated to the order $f(r) \leq g(Cr)$ for some $C$ (written $f \precsim g$) is independant of the generating set $S$, so that $b_\G(r)$ is a group invariant.

For many groups, e.g. those containing a free semigroup, the growth function is exponential. However, the growth function of a nilpotent group $\G$ is polynomial $b_\G(r) \approx r^{d(\G)}$ where $d(\G)=\sum k. \textrm{rank}(\G_k/\G_{k+1})$ is the algebraic degree of nilpotency of $\G=\G_1$ associated to the filtration $\G_{k+1}=[\G_k,\G]$ (\cite{Bas}, \cite{Gui}, \cite{Wol}). Conversely, Gromov proved that polynomial growth implies virtual nilpotency (\cite{Gro}, see also \cite{Kle} and \cite{ST} for an explicit version applying to finite groups). This implies in particular that polynomial growth functions are indexed by integers $d(\G)$ and any two are always comparable for $\precsim$.

In the eighties, Grigorchuk has shown some groups have intermediate growth, i.e. faster than polynomial and slower than exponential. In \cite{Gri1}, he considers a family indexed by a Cantor set $\{0,1,2\}^\N$ of groups $G_\o$ acting on a binary rooted tree. These groups are commensurable with the infinite torsion groups constructed earlier by Aleshin in \cite{Ale}. Many groups in these families satisfy growth inequalities of the form $e^{r^\a} \precsim b_{G_\o}(r) \precsim e^{r^\b}$ for exponents $\frac{1}{2} \leq \a < \b < 1$. On the other hand, for some sequences $\o$, the growth of $G_\o$ is ``close to'' $e^r$. Grigorchuk also proved the existence of uncountable antichains of growth functions (i.e. collections of pairwise non comparable such functions).

Recently, Bartholdi and Erschler have computed the intermediate growth functions of some groups related to the group $G_{(012)^\infty}$ (see \cite{BE}). More precisely, for $\G_0= \Z/2\Z \wr_X G_{(012)^\infty}$ and $\G_{k+1}= \G_k \wr_X G_{(012)^\infty}$, there are explicit exponents $\a_k <1$ accumulating to $1$, such that their growth functions satisfy $b_{\G_k}(r) \approx e^{r^{\a_k}}$.

The purpose of the present article is to draw a panorama of growth behaviors in the range $e^{r^\a}$. The two main points are that on the one hand there is a neighborhood of $1$ in which any $\a$ is the growth exponent of some group, raising an explicit uncountable family of groups for which the growth functions are strictly comparable, and on the other hand, there are groups the growth function of which oscillates between two distinct exponents $\a < \b$, which explains non comparison phenomena. More precisely:

\begin{theorem}\label{mainthm}
Let $\eta\approx 0.8105$ be the real root of $X^3+X^2+X-2$ and $\a_0=\frac{\log 2}{\log 2-\log\eta}\approx 0.7674$. Then
for any $\a_0 \leq \a \leq \b \leq 1$, there exists a group $G(\a,\b)$ such that:
$$\liminf \frac{\log \log b_{G(\a,\b)}(r)}{\log r} = \a \textrm{ and $\limsup \frac{\log \log b_{G(\a,\b)}(r)}{\log r} = \b$.} $$
In particular, there exists a group $G(\a)$ such that $\lim \frac{\log \log b_{G(\a)}(r)}{\log r}=\a$.
\end{theorem}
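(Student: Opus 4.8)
The plan is to build $G(\a,\b)$ as a suitable permutational wreath product (or iterated wreath product) over an action on a rooted tree, in the spirit of Grigorchuk's groups $G_\o$ and the Bartholdi–Erschler computations, but where the defining data is allowed to vary along the tree in a controlled, ``oscillating'' pattern. Concretely, I would fix a branch group acting on a spherically homogeneous rooted tree with a sequence of oriented Grigorchuk-type groups assigned to the levels, where the labels at level $n$ are chosen from the three-letter alphabet (the ``$\{0,1,2\}$'' data) according to whether $n$ lies in a ``slow block'' or a ``fast block''. The known estimates give, for a group defined with label frequencies roughly constant over a long range, a growth of type $e^{r^{\gamma}}$ with $\gamma$ a computable function of those frequencies; the constants $\eta$ and $\a_0$ in the statement are exactly the output of this computation at the extreme uniform choices. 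The oscillation between $\a$ and $\b$ is then produced by alternating long blocks tuned to $\a$ with long blocks tuned to $\b$, with block lengths growing fast enough that the growth function ``forgets'' the previous block before the next asymptotic regime is reached.

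The key steps, in order, would be: (1) set up the group $G(\a,\b)$ precisely as such a branch group with a prescribed level-labelling $\o=\o(\a,\b)$, and record the portrait/self-similarity structure needed for growth estimates; (2) prove the \emph{upper bound} $b_{G(\a,\b)}(r) \precsim e^{r^{\b}}$ along the relevant subsequence and, more importantly, the uniform bound $\limsup \frac{\log\log b}{\log r}\le \b$, by a word-length/length-reduction argument: every element of $S^r$ has, after descending $k$ levels in the tree, coordinates whose lengths sum to at most $\eta^k r$ (or the appropriate contraction rate depending on the labels encountered), so a recursive counting bound on $b(r)$ in terms of $b(\eta^k r)^{(\text{branching})}$ closes up and yields $b(r)\le e^{C r^{\gamma}}$ with $\gamma$ determined by the worst (largest) local exponent, i.e.\ $\b$; (3) prove the \emph{lower bound} $\liminf \frac{\log\log b}{\log r}\ge \a$ by exhibiting, inside the group, enough independent ``directions'' — typically by embedding large finite quotients or products, or by the standard trick of evaluating the group on a ball of the tree and counting configurations — giving $b(r)\ge e^{c r^{\gamma'}}$ with $\gamma'$ controlled from below by $\a$; (4) prove that the $\limsup$ actually \emph{attains} $\b$ and the $\liminf$ actually attains $\a$ (not just lies between), by picking the subsequences of radii $r$ sitting at the far end of a long fast block (resp.\ slow block), where the local regime has had time to dominate; and finally (5) for the ``in particular'' statement, take $\a=\b$, so the labelling is essentially uniform and steps (2)–(3) pinch $\frac{\log\log b}{\log r}$ to the single value $\a$, giving $\lim \frac{\log\log b_{G(\a)}(r)}{\log r}=\a$.

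The main obstacle I expect is step (4): the matching of the block-length schedule with the growth estimates. Upper and lower bounds of the form $e^{r^{\gamma}}$ in this setting always come with multiplicative constants in the exponent and a ``memory'' — the growth in a new block is polluted by the accumulated size $b$ at the end of the previous block — so to see the pure exponent $\b$ (resp.\ $\a$) emerge in the limit one must choose the block lengths increasing fast enough that $\frac{\log\log(\text{leftover from old block})}{\log r}$ becomes negligible compared to the new regime, yet not so fast that one ``overshoots'' and the local exponent itself drifts. Getting this trade-off to work simultaneously for both the $\limsup$ and the $\liminf$ — i.e.\ so that along one subsequence of radii the fast regime genuinely dominates and along a complementary subsequence the slow regime genuinely dominates — is the delicate part; once the schedule is fixed, the growth estimates themselves are the (lengthy but routine) wreath-product/contraction computations already standard since Grigorchuk and refined by Bartholdi–Erschler, whose uniform-frequency case produces the numbers $\eta$ and $\a_0$.
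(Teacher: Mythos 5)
Your plan matches the paper's approach in outline: the group is indeed a permutational wreath product $F \wr_X G_\o$ over a Grigorchuk group defined by a sequence $\o \in \{0,1,2\}^{\N}$; the upper bound comes from a length-contraction recursion (with the contraction coefficient controlled by $\eta$); the lower bound comes from exhibiting many independent ``directions'' (in the paper, the \emph{activity} of a word, i.e.\ the size of the inverted orbit, which lower-bounds the ball cardinality by $|F|^{s(w)}$ -- this is exactly why the base group $G_\o$ is replaced by $F\wr_X G_\o$ \`a la Bartholdi--Erschler, a point you mention but do not articulate); and the oscillation between $\a$ and $\b$ comes from interleaving long blocks of a sequence $\o(\a)$ with long blocks of a sequence $\o(\b)$, combined with a localization lemma (a ball of radius $r$ sees only $\o_0,\dots,\o_{\log_2 r}$) and an asymptotic comparison lemma (the asymptotics depend only on the tail of $\o$). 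The paper proves the single-exponent case first and derives the oscillating case from it.

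The real gap is in your step (5). You write that for $\a=\b$ ``the labelling is essentially uniform and steps (2)--(3) pinch $\frac{\log\log b}{\log r}$ to the single value $\a$.'' That is \emph{not} how the exact exponent is achieved. If one uses a \emph{periodic} (or constant-frequency) labelling, the methods of the paper deliver only a pair of non-matching bounds; for instance, for the periodic $\o=(001122)^\infty$ one obtains $r^{0.8019} \leq \log b_\o(r) \leq r^{0.8684}$, and closing this gap for a periodic $\o$ is an open problem. To pin down a single exponent $\a$ strictly between $\a_0$ and $1$, the paper takes
\[
\o(\a) = 0^{m_1}(012)^{n_1}\,0^{m_2}(012)^{n_2}\cdots
\]
with both $m_i,n_i \to \infty$ and the rotation frequency $\frac{3n_i}{m_i+3n_i}$ tending (from above) to the $\l$ satisfying $2 = (2/\eta^\l)^\a$. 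The growing block sizes are essential on both sides: the Growth Lemma gives a multiplicative prefactor $A^P$ in front of $r^\a$, where $P$ bounds the local period, and this is harmless only if $P=P(\log_2 r)=o(\log r)$; the lower bound comes from a matrix-norm estimate $\|A_{\o_0}\cdots A_{\o_k}\| \le C^{j}\,2^{\sum m_i}(2/\eta)^{3\sum n_i}$ with $j$ the number of blocks, which is tight only if $j=o(k)$, again forcing the blocks to grow. So even the $\a=\b$ case has its own slow oscillation of block lengths, and the tuning you flag as ``the main obstacle'' in step (4) is already present there; your plan as stated would stall at the base case.
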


The groups $G(\a,\b)$ will be explicitely described as $F \wr_X G_{\o}$ for appropriate sequence $\o=\o(\a,\b)$. Note that the group $G(\a_0)$ is precisely the group $\G_0=\Z/2\Z \wr_X G_{(012)^\infty}$ considered in \cite{BE}. Also a better study of oscillation phenomena provides uncountable antichains of growth functions satisfying a uniform upper bound $e^{r^\b}$ for any $\b > \a_0$.

The strategy to prove theorem \ref{mainthm} is to construct first the groups $G(\a)$ by appropriate sequences $\o(\a)$. The construction of the groups $G(\a,\b)$ with oscillating growth for $\a<\b$ then follows from the fact that the asymptotic behavior of the growth of $G_\o$ depends only on the asymptotic of $\o$, whereas the first values of the growth function depend only on the first values of $\o$. This permits to construct groups with different growth behaviors at different scales.

In order to ease notation, adopt the following:
\begin{definition}
Given a finitely generated group $G$, the {\it upper logarithmic growth exponent} $\overline{\a}(G)$ and the {\it lower logarithmic growth exponent} $\underline{\a}(G)$ are real numbers in $[0,1]$ defined as:
$$\overline{\a}(G)= \limsup \frac{\log \log b_G(r)}{\log r} \textrm{ and $\underline{\a}(G)= \liminf \frac{\log \log b_G(r)}{\log r}$.} $$
In case of equality, call {\it logarithmic growth exponent} the number $\a(G)=\overline{\a}(G)=\underline{\a}(G)$.
\end{definition}
For submultiplicative functions, inequality $b(Cr)\leq b(r)^C$ implies:
$$\frac{\log \log b(Cr)}{\log r} \leq \frac{\log \log b(r)}{\log r} + \frac{\log C}{\log r}, $$
so that the logarithmic growth exponents of groups are independent of the choice of a particular representative $b_{\G,S}(r)$, i.e. the choice of generating set. Note that if $b_G(r) \simeq e^{r^\a}$, then $\a(G)=\a$ but the converse is not true, as shown by functions $e^{r^\a(\log r)^p}$ for any value of $p$. In particular, the growth functions of the groups studied here are not computed, but only their logarithmic growth exponents.

The article is structured as follows. Sections 2 and 3 are devoted to the description of the involved groups $\G_\o$, and in particular the notion of activity of a representative word. Section 4 presents the three main tools of estimation for growth. The activity of words is studied in section 5 to derive precise growth estimates, i.e. construct groups with a given logarithmic growth exponent. Oscillation phenomena are studied in sections 7 and 8, which permits to explain the existence of antichains of growth functions. Some explicit estimates on the frequency of oscillation are given. A few comments and some questions conclude the article. 

Note that after this work was first submitted, further computations of growth functions of similar groups have been done in \cite{KP} and \cite{BE2}. Also similar estimates on entropy of random walks have been done in \cite{Bri3}.

\section{The groups involved}

\subsection{Definition}\label{definition} Following Grigorchuk \cite{Gri1}, associate to each given sequence $\o=\o_0\o_1\o_2\dots$ in $\{0,1,2\}^\N$ a group $G_\o$ of automorphism of a binary rooted tree $T$, generated by four elements $G_\o=\langle a,b_\o,c_\o,d_\o\rangle$, defined via the wreath product isomorphism:
\begin{eqnarray}Aut(T) \simeq Aut(T) \wr S_2 = (Aut(T) \times Aut(T)) \rtimes S_2, \label{wr} \end{eqnarray}
where $S_2$ acts on the product by permuting components. The generator $a=(1,1)\e$, where $\e$ is non-identity in $S_2$, is independent of $\o$ and only acts at the root of $T$. The three other generators are defined recursively by:
\begin{eqnarray} b_\o=(u^b(\o_0),b_{\s \o}), c_\o=(u^c(\o_0),c_{\s \o}), d_\o=(u^d(\o_0),d_{\s \o}), \label{generateurs}\end{eqnarray}
where $\s$ is the shift of sequence $\s \o=\o_1\o_2\dots$ and:  
\begin{eqnarray} u^b \left(\begin{array}{c} 0 \\ 1 \\ 2 \end{array} \right)=\left(\begin{array}{c} a \\ a \\ id \end{array} \right),u^c \left(\begin{array}{c} 0 \\ 1 \\ 2 \end{array} \right)=\left(\begin{array}{c} a \\ id \\ a \end{array} \right),u^d \left(\begin{array}{c} 0 \\ 1 \\ 2 \end{array} \right)=\left(\begin{array}{c} id \\ a \\ a \end{array} \right). \label{defbcd} \end{eqnarray}
The group $G_\o$ is defined by the sequence $\o$ which rules the embeddings $G_\o \hookrightarrow G_{\s\o}\wr S_2$. The following relations are easily checked:
\begin{eqnarray} a^2=b_\o^2=c_\o^2=d_\o^2=b_\o c_\o d_\o=id. \label{relationsbasic} \end{eqnarray}
In particular, the group generated by $b_\o,c_\o,d_\o$ is a Klein group $V=S_2 \times S_2$ and each of the four generators has order 2 (unless $\o$ is constant), so they generate $G_\o$ as a quotient semigroup of $\O_\o=\{a,b_\o,c_\o,d_\o\}^\ast$, the free semigroup of words in the generators with concatenation as product. Also note that conjugating by $a$ exchanges the components on the two subtrees, in particular:
\begin{eqnarray}ab_\o a=(b_{\s \o},u^b(\o_0)), ac_\o a=(c_{\s \o},u^c(\o_0)), ad_\o a=(d_{\s \o},u^b(\o_0)). \label{generateursconjugues}\end{eqnarray}

Now following \cite{BE}, let $\r=1^\infty \in \partial T$ be the rightmost geodesic ray out of the root of $T$. Note that $b_\o,c_\o,d_\o$ fix $\r$ independently of $\o$. Denote $X=\r G_\o$ the right orbit of $\r$ under $G_\o$. The permutational wreath product of $G_\o$ and another group $F$ over $X$ is the group:
$$\G_\o = F \wr_X G_\o = \left( \SS_X F \right) \rtimes G_\o, $$
where $\SS_X F$ is the group of finitely supported functions $\f:X \rightarrow F$, on which $G_\o$ acts on the left by $(g.\f)(x)=\f(xg)$, and in particular the supports satisfy $supp(g.\f)=supp(\f)g^{-1}$. The elements are denoted $\f g$ for $\f \in \SS_X F$ and $g \in G_\o$. The computation rule is $(\f_1 g_1)(\f_2 g_2)= (\f_1 (g_1.\f_2)) (g_1g_2)$. Throughout the present article, assume the group $F$ is finite.

As a generating set, use $S_\o=\{ a \} \sqcup \{\f_f v | v \in \{id_{G_\o},b_\o,c_\o,d_\o\},f \in F\}$. Note that $\r v=\r$, so $[\f_f,v]=id_{\G_\o}$ and the set $\{\f_f v\}$ generates a finite subgroup in $\G_\o$, which is abstractly isomorphic to $F \times V$.

\subsection{A short history} The groups $G_\o$ are commensurable with the groups introduced by Aleshin \cite{Ale}, where automata techniques were used to provide a short solution to Burnside's problem. The groups $G_\o$ and especially $G_{(012)^\infty}$ have been widely studied especially since they provide the essentially only known exemples of groups of intermediate growth (\cite{Bar1}, \cite{Bar2}, \cite{BS}, \cite{Bri}, \cite{Ers1}, \cite{Ers2}, \cite{Ers3}, \cite{Gri1}, \cite{Gri2}, \cite{MP}, \cite{Zuk}). In particular, the best known estimates on the growth of $G_{(012)^\infty}$ are:
\begin{theorem}
$$e^{r^{0.5207}}\precsim b_{G_{(012)^\infty}}(r) \precsim e^{r^{\a_0}}. $$
\end{theorem}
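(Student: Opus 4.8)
The statement is the conjunction of two independent bounds, and I treat each in turn; both, in the end, rest on one delicate combinatorial estimate about which words in the generators can be short, and that estimate is the real content.

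For the upper bound $b_{G_{(012)^\infty}}(r)\precsim e^{r^{\a_0}}$, the plan is to iterate the embedding $G_\o\hookrightarrow G_{\s\o}\wr S_2$ of \eqref{wr}--\eqref{defbcd}, using that $\s^3(012)^\infty=(012)^\infty$, so that three decomposition steps return one to a conjugate copy of the same group. The key is a \emph{contraction lemma}: there is a constant $c$ so that every geodesic word $w$ of length $n$ in $G_{(012)^\infty}$ decomposes under \eqref{generateurs} as $(w_0,w_1)$ or $(w_0,w_1)\e$ with $|w_0|+|w_1|\le\eta\,n+c$, where $\eta$ is the real root of $X^3+X^2+X-2$. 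Granting this, an element of length $\le n$ is determined by the pair $(w_0,w_1)$ together with a choice in $S_2$, so $b_{G_{(012)^\infty}}(n)$ is at most $2(\eta n+c+1)\,b_{G_{(012)^\infty}}\!\big(\lceil(\eta n+c)/2\rceil\big)^2$; writing $\b(n)=\log b_{G_{(012)^\infty}}(n)$ this gives $\b(n)\le 2\,\b(\tfrac{\eta}{2}n+\tfrac c2)+O(\log n)$. Iterating $j$ times shrinks the argument by $(\eta/2)^j$ and multiplies the dominant term by $2^j$; stopping when $(\eta/2)^jn$ is bounded yields $\b(n)=O\!\big(2^{\log n/\log(2/\eta)}\big)$ up to polynomial factors, which is $O(n^{\a_0})$ because $\a_0=\log 2/\log(2/\eta)=\log 2/(\log 2-\log\eta)$, and polynomial factors do not affect the logarithmic growth exponent.

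All the difficulty is concentrated in the contraction lemma. I would record each element in the normal form that alternates a letter $a$ with a nonidentity element of the Klein group $V=\langle b_\o,c_\o,d_\o\rangle$, and analyze how such a word transforms under \eqref{generateurs}: each occurrence of $a$ is split between the two halves, while each $V$-syllable is sent entirely to one half according to $\o_0\in\{0,1,2\}$. The subtlety is that not every word is geodesic — the relations \eqref{relationsbasic} among $b_\o,c_\o,d_\o$ force cancellations — so one must classify which short blocks of consecutive syllables can occur in a geodesic word; this classification yields a finite rewriting system whose incidence matrix controls the worst-case total length after three decomposition steps, and whose Perron behaviour is exactly captured by the cubic $X^3+X^2+X-2$, i.e. the three-level contraction factor is $\eta^3$. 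I expect this step — essentially Bartholdi's argument — to be the main obstacle, and to require a careful (computer-assisted) case analysis.

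For the lower bound $e^{r^{0.5207}}\precsim b_{G_{(012)^\infty}}(r)$, I would start from the branch structure: $G_{(012)^\infty}$ has a finite-index subgroup $H$ that embeds, via the two sections, into $G_{(012)^\infty}\times G_{(012)^\infty}$ in a way that is \emph{expanding in reverse}, i.e. every pair $(g_0,g_1)$ is, up to finite index, $\psi(h)$ for some $h\in H$ with $|h|\le C(|g_0|+|g_1|)+C$. Taking $|g_0|=|g_1|=m$ gives $b_{G_{(012)^\infty}}(2Cm+C)\ge\mathrm{const}\cdot b_{G_{(012)^\infty}}(m)^2$, hence $\b(n)\succsim n^{\log 2/\log(2C)}$; the classical value $C=2$ already gives the exponent $\tfrac12$. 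To reach $0.5207$ one must sharpen this anti-contraction: working three levels down, an $8$-tuple of short elements admits a common preimage shorter than the naive bound, because suitable short words realize several sections at once thanks to the cancellations in $V$. Optimizing this saving — over a weighted length function on the generators — again reduces to a Perron eigenvalue of the incidence matrix of a finite rewriting description of minimal-length preimages, and that eigenvalue produces the constant $0.5207$. Symmetrically to the upper bound, the main obstacle is pinning down this optimal anti-contraction constant, which requires an exhaustive analysis of minimal-length words.
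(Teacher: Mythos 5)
The paper does not prove this theorem: it is stated as a summary of results from the literature, with the upper bound attributed to Bartholdi \cite{Bar1} (see also Muchnik--Pak \cite{MP}) and the lower bound to Brieussel's thesis \cite{Bri} (see also \cite{Bar2}, \cite{Leo}). There is therefore no in-paper proof to compare against; what follows assesses your sketch on its own terms.

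Your upper-bound outline does match Bartholdi's published approach in spirit: one introduces a weighted length on the generators (the present paper recalls this machinery in Section 5.2, Lemma \ref{contraction}, when generalizing to the groups $\G_\o$), proves a one-level contraction $\|w_0\|+\|w_1\|\le\eta\|w\|+C$ \emph{for the weighted length}, and iterates to get $\log b(n)=O(n^{\a_0})$ with $\a_0=\log 2/(\log 2-\log\eta)$. One imprecision: you assert the contraction for the ordinary word length, where the factor $\eta$ does not hold geodesic-by-geodesic; it holds for the weighted length, which is only bilipschitz to the word length, and that equivalence is what makes the three-level factor $\eta^3$ you allude to consistent with the one-level statement. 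This does not change the exponent, but it is where the argument actually lives. Your lower-bound outline is considerably vaguer and leaves the real content unaddressed. The branch/anti-contraction idea you describe does produce the classical exponent $\tfrac12$, and you correctly identify that improving it requires a refined analysis of which tuples admit short common preimages, but calling this ``a Perron eigenvalue of an incidence matrix of minimal-length preimages'' is a plan rather than an argument; nothing in your sketch produces the specific value $0.5207$. As the paper itself notes just below this theorem, ``the study of dilatation of pairs of words is delicate to handle, explaining the large gap between the upper and lower exponents of $G_{(012)^\infty}$'' --- this is precisely the gap your sketch glides over, and it is the reason the paper cites rather than reproves the bound.
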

The upper bound comes from \cite{Bar1} (see also \cite{MP}) and the lower bound from \cite{Bri} (see also \cite{Bar2}, \cite{Leo}). The estimation on the growth exponents of $G_\o$ is tightly related to the contraction of the length of reduced words $w=(w_0,w_1)$ under the wreath product decomposition (\ref{wr}). If for all reduced words, $|w_0|+|w_1|$ is a large contraction of $|w|$, the upper growth exponent is small. If for {\it all} pairs of reduced words, $|w|$ is a small dilatation of $|w_0|+|w_1|$, the lower growth exponent is big. As it turns out, the study of dilatation of pair of words is delicate to handle, explaining the large gap between the upper and lower exponents of $G_{(012)^\infty}$.

In \cite{BE}, Bartholdi and Erschler have bypassed this problem, considering (among others) the group $F \wr_X G_{(012)^\infty}$, where $F$ is any finite group, for which they prove:
\begin{theorem}\cite{BE}
$$b_{F \wr_X G_{(012)^\infty}}(r) \approx e^{r^{\a_0}}.$$
\end{theorem}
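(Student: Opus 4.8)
The plan is to tie the growth of $\G_\o=F\wr_X G_\o$, for the periodic sequence $\o=(012)^\infty$, to the \emph{inverted orbit growth} of $G_\o$ along the ray $\r$, namely the function
$$\d(r)=\max\bigl\{\,|O(w)|\ :\ w\in\O_\o,\ |w|\le r\,\bigr\},$$
where $O(w)\subseteq X$ is the inverted orbit of $\r$ under $w$, i.e.\ the set of positions successively occupied by $\r$ as the letters of $w$ act on it one at a time, read from the right (the notion introduced in \cite{BE}). First I would establish a word-length estimate in the permutational wreath product: writing an element $\f g$ of $\G_\o$ as a word in the generators $S_\o$ amounts to carrying a cursor, sitting initially at $\r$, along the orbit $X$, depositing each value $\f(x)$ when the cursor visits the point $x\in supp(\f)$, and ending with total $G_\o$-displacement $g$; this yields
$$|\f g|_{S_\o}\ \asymp\ |g|_{G_\o}\ +\ \min\bigl\{\,|\bar h|\ :\ \bar h\ \text{a word representing}\ g\ \text{in}\ G_\o,\ supp(\f)\subseteq O(\bar h)\,\bigr\},$$
the implied constants depending only on the finite group $F$; the lower estimate reflects that distinct lamp configurations genuinely force the cursor through the corresponding points. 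In particular, if $w$ is a word of length $\le r$ with $|O(w)|=\d(r)$, then the $|F|^{\d(r)}$ functions supported on $O(w)$ yield pairwise distinct elements of $\G_\o$ of $S_\o$-length $\lesssim r$, so $\log b_{\G_\o}(r)\succsim\d(r)\log|F|$.

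Next I would bound $\d(r)$ from below by $\succsim r^{\a_0}$ by exhibiting explicit words with large inverted orbit. Using the wreath decompositions (\ref{wr}) and (\ref{generateurs}) one sees that the inverted orbit of a word of length $n$ splits into the part lying in the right subtree — an inverted orbit of a subword in $G_{\s\o}$ along $\r$ — and the part in the left subtree, again such an inverted orbit, the two subwords having total length at most $\eta n+O(1)$, where $\eta$ is the root of $X^3+X^2+X-2$ which governs the contraction of (weighted) reduced words in $G_{(012)^\infty}$. A matching recursive construction, unfolding this splitting over the period of $\o$ so that each step roughly doubles the number of active subwords while the total length is multiplied by $\eta$, produces, for each $r$, a word of length $\le r$ whose inverted orbit has size $\succsim r^{\a_0}$: the recursion $\d(n)\gtrsim 2\,\d(\tfrac{\eta}{2}n)-O(1)$ solves with exponent $\a_0=\frac{\log 2}{\log 2-\log\eta}$. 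Hence $\log b_{\G_\o}(r)\succsim r^{\a_0}$.

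For the matching upper bound, the point is that $\G_\o$ is itself self-similar: the embedding $G_\o\hookrightarrow G_{\s\o}\wr S_2$ lifts to $\G_\o\hookrightarrow\G_{\s\o}\wr S_2$, the lamp coordinates in $\SS_X F$ distributing over the two subtrees without extra length cost, since $X$ itself decomposes under the $G_\o$-action — here one uses crucially that $\r$ is the rightmost ray, fixed by $b_\o,c_\o,d_\o$. One can then run the Grigorchuk--Bartholdi length-contraction argument directly on $\G_\o$: a reduced word of (weighted) length $\ell$ decomposes at the first level into two subwords of total (weighted) length $\le\eta\,\ell+O(1)$, whence the number $N(\ell)$ of elements of (weighted) length $\le\ell$ satisfies $N(\ell)\le N(\tfrac{\eta}{2}\ell+O(1))^2$, which iterates to $\log N(\ell)\precsim\ell^{\a_0}$, i.e.\ $\log b_{\G_\o}(r)\precsim r^{\a_0}$; running the same recursion as an upper bound also gives $\d(r)\precsim r^{\a_0}$, consistent with the bound $\log b_{G_{(012)^\infty}}(r)\precsim r^{\a_0}$ of \cite{Bar1}. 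Combining the two estimates, $\log b_{\G_\o}(r)\asymp r^{\a_0}$, that is $b_{F\wr_X G_{(012)^\infty}}(r)\approx e^{r^{\a_0}}$.

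I expect the main obstacle to be the upper bound: one must verify carefully that adjoining the permutational lamp group $F$ over the orbit $X$ does not spoil the contraction coefficient $\eta$ of reduced words, so that $\G_\o$ contracts exactly like $G_\o$. What makes the whole argument work — and is not available for $G_{(012)^\infty}$ alone — is that the matching \emph{lower} bound now passes through the inverted orbit (which is straightforward to make large) rather than through the dilatation of \emph{pairs} of reduced words, the delicate point responsible for the gap in the known estimates for $G_{(012)^\infty}$.
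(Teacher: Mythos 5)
The paper states this theorem as a citation from \cite{BE} and gives no proof of its own; however, the machinery of Sections 3--5 generalizes exactly the \cite{BE} argument and, specialized to $\o=(012)^\infty$, reproves it. Your outline matches that machinery closely. Your ``inverted orbit growth'' $\d(r)$ is what the paper calls the activity $s_\o(r)$: Proposition \ref{activite}(4) records the identification of $s(w)$ with $|\Oc(g^{-1})|$. Your lower bound via explicit words with doubling inverted orbit is the pull-back construction of Proposition \ref{somecontraction}; your upper bound via the contraction carried from $G_\o$ to $\G_\o$ is Lemma \ref{contraction}, fed into the Growth Lemma \ref{growthlemma} to give Proposition \ref{corupperbound}. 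The one organizational difference is that you route everything through an approximate word-length formula on $\G_\o$ ($|\f g|_{S_\o}$ comparable to the least length of a word for $g$ whose inverted orbit covers $supp(\f)$), as \cite{BE} does, whereas the paper avoids proving such a formula and instead establishes the direct counting estimate of Corollary \ref{actgro} (the number of elements admitting a representative of activity $\le s$ is $\le C^s$, via minimal trees). Both routes yield the same conclusion; the counting route is lighter and dovetails with the localization arguments later in the paper.

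Two points in your sketch need tightening, and both are where the real work sits. (a) The lower-bound recursion $\d(n)\gtrsim 2\,\d(\tfrac{\eta}{2}n)$ does \emph{not} follow from the contraction $\|w_0\|+\|w_1\|\le\eta\|w\|$, which is an inequality in the wrong direction for a lower bound: contraction says subwords are short, not that one can build a long word above a given pair. What is needed is the explicit pull-back substitution $\z:\O'_{\s\o}\to\O'_\o$ of Proposition \ref{somecontraction}, designed so that $\z(au)=(ua,au)$ (doubling activity) while the letter-count vector is multiplied by $A_{\o_0}$; that the resulting length inflation is only $2/\eta$ per period is the non-automatic fact that the spectral radius of $A_0C$ equals $2/\eta$. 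This is precisely the ingredient that is \emph{not} available for $G_{(012)^\infty}$ alone, which your last paragraph alludes to but the middle paragraph glosses over. (b) The upper-bound recursion $N(\ell)\le N(\tfrac{\eta}{2}\ell+O(1))^2$ is not justified as written, since the split lengths $\ell_0,\ell_1$ with $\ell_0+\ell_1\le\eta\ell+O(1)$ need not be balanced; you must either unfold $k$ levels to uniformly short subwords as in \cite{Bar1}, or (as the paper does) pass through a subadditive quantity and the concavity trick of the Growth Lemma \ref{growthlemma}. The conclusion is correct, but the recursion as stated is a heuristic, not a proof step.
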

In short, if the upper estimates still apply, the use of permutational wreath product permits to obtain a good lower bound from small dilatation of {\it some} pairs of words. The techniques developed in \cite{BE} are not restricted to the specific sequence $\o=(012)^\infty$, and can provide a good understanding of growth of $\G_\o$ for rotating sequences $\o$, as explained below. The construction of an appropriate sequence $\o(\a)$ or $\o(\a,\b)$ will be the key point to prove Theorem \ref{mainthm}.

\section{A description of the groups}

This section aims at giving description of the group $\G_\o=F\wr_X G_\o$. Useful notions are that of minimal tree and active set of a word. The active set is very much related with the inverted orbit introduced in \cite{BE}.

\begin{lemma}\label{CE}
The group $\G_\o=F \wr_X G_\o$ embeds cannonically into the finite permutational wreath product $\G_{\s\o}\wr S_2$. More precisely, the application $\F$:
\begin{eqnarray*}
\G_\o &\hookrightarrow& \G_{\s\o} \wr S_2 \\
a &\mapsto& (1,1)a \\
v_\o &\mapsto& (u^v(\o_0),v_{\s\o}) \\
\f_f &\mapsto& (1,\f_f)
\end{eqnarray*}
is an injective morphism of groups.
\end{lemma}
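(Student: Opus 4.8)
The plan is to verify directly that $\F$ is a well-defined injective group homomorphism, using the wreath product decomposition $Aut(T) \simeq Aut(T) \wr S_2$ from (\ref{wr}) together with the defining relations (\ref{generateurs})–(\ref{defbcd}). First I would check that $\F$ respects the semidirect product structure of $\G_\o = (\SS_X F) \rtimes G_\o$. On the $G_\o$-part, the assignments $a \mapsto (1,1)a$ and $v_\o \mapsto (u^v(\o_0), v_{\s\o})$ are nothing but the images of the canonical embedding $G_\o \hookrightarrow G_{\s\o} \wr S_2$ given by (\ref{generateurs}), so compatibility with products there is immediate from the fact that this is already known to be a homomorphism (and the basic relations (\ref{relationsbasic}) are preserved). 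The genuinely new part is the $\SS_X F$-factor: one must check that $\f_f \mapsto (1, \f_f)$ is consistent, i.e. that for $\f \in \SS_X F$ the natural image lands in the second copy of $\SS_{X'}F$ inside $\G_{\s\o} \wr S_2$, where $X' = \r G_{\s\o}$. The key geometric observation is that $\r = 1^\infty$ lies entirely in the right subtree of $T$, so the orbit $X = \r G_\o$ — once we track how $G_\o$ acts through the decomposition (\ref{wr}) — is carried into the right copy of $T$, hence the functions in $\SS_X F$ are supported there; this is exactly why the first coordinate of $\F(\f_f)$ is trivial.

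Next I would verify the one mixed relation that actually has content, namely the interaction of $a$ with $\f_f$: since conjugation by $a$ swaps the two subtrees (as recorded in (\ref{generateursconjugues})), one has $a\,\f_f\,a$ acting by $f$ along the ray $\r a$, which now sits in the left subtree; under $\F$ this must become $(1,1)a \cdot (1,\f_f) \cdot (1,1)a = (\f_f, 1)a \cdot$ (up to bookkeeping), and one checks both sides agree as elements of $\G_{\s\o}\wr S_2$. Equivalently, it suffices to confirm that $\F$ sends each generator in $S_\o$ to the claimed element and that all defining relations of $\G_\o$ — the relations (\ref{relationsbasic}) inherited from $G_\o$, the relation $[\f_f, v] = id$ noted after the definition of $S_\o$, and the wreath-product computation rule $(\f_1 g_1)(\f_2 g_2) = (\f_1 (g_1.\f_2))(g_1 g_2)$ — are respected; this is a finite, essentially mechanical check once the support-tracking above is in place.

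Finally, injectivity: since $\F$ restricted to $G_\o$ is the standard faithful action on the tree (the embedding $G_\o \hookrightarrow G_{\s\o}\wr S_2$ is injective because $G_\o$ acts faithfully on $T$), and $\F$ restricted to $\SS_X F$ is visibly injective into the second coordinate, a kernel element $\f g$ would have to satisfy $g = id$ and then $\f = id$; more carefully, if $\F(\f g) = (*, *)\,\text{(trivial permutation)}$ equals the identity, reading off the $S_2$-component forces $g \in St(1)$ (the stabilizer of the first level), and then recursing on the two coordinates (which live in $\G_{\s\o}$) lets one conclude by the faithfulness of the tree action of $G_\o$ together with injectivity on the $\SS_X F$ part. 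The main obstacle — though it is more a bookkeeping subtlety than a deep difficulty — is getting the orbit $X = \r G_\o$ and its image correctly identified inside the two subtrees, i.e. making precise the claim that $\SS_X F$ maps into the \emph{second} coordinate and verifying the $a$-conjugation relation with the right indices; once that is pinned down, everything else is routine.
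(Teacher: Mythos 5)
Your proposal takes essentially the same route as the paper: exploit the semidirect-product structure $\G_\o = (\SS_X F) \rtimes G_\o$, embed the $G_\o$-factor via the classical wreath recursion, send $\SS_X F$ into the wreath product using the fact that $\r = 1^\infty$ lies in the right subtree, and verify the multiplicative rule. Two remarks. First, the phrase ``all defining relations of $\G_\o$ are respected'' is imprecise: $\G_\o$ is not given by a presentation, and what must actually be checked is that $\F|_{G_\o}$ and $\F|_{\SS_X F}$ are homomorphisms together with $\F(g.\f) = \F(g)\F(\f)\F(g)^{-1}$ for all $g \in G_\o$, $\f \in \SS_X F$. Your $a\f_f a$ computation is the key case, but the general verification---which you label ``mechanical''---is precisely the content of the paper's proof: the paper extends $\F$ to all of $\G_\o$ by $\F(\f g) = (\f_0 g_0, \f_1 g_1)\s$ with $\f_t = \f|_{X_t}$ and checks pointwise that the twisted component $(\f(g.\f'))|_{X_t}$ equals $\f_t\,(g_t.\f'_{\s(t)})$. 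That step should be carried out, not deferred. Second, a small inaccuracy: $\F|_{\SS_X F}$ does not land in the second coordinate in general; only the single generator $\f_f$ supported at $\r$ does, while a general $\f \in \SS_X F$ maps to $(\f_0,\f_1)$ spread across both coordinates---the map is nevertheless visibly injective, which is all your injectivity argument needs (a point the paper leaves implicit).
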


Any $\g$ in $\G_\o$ is decomposed $\g=\f g$, with $g \in G_\o$ and $\f: X \rightarrow F$. The classical embedding $G_\o \hookrightarrow G_{\s\o} \wr S_2$ provides a decomposition $g=(g_0,g_1)\s$. Also the boundary of the tree can be decomposed into two components $\partial T = \partial T_0 \sqcup \partial T_1$ with $T_t$ the tree descended from the first level vertex $t$. In particular, the orbit $X$ inherits this decomposition into $X=X_0 \sqcup X_1$. Set $\f_t=\f|_{X_t}$ the restriction of $\f$ to the subset $X_t$ of the orbit $X$. With these notations, the application $\F$ is given by:
$$\F(\g)=(\f_0 g_0,\f_1 g_1)\s \in \G_\o \wr S_2. $$
In order to prove the lemma, it is sufficient to check that $\F(\g\g')=\F(\g)\F(\g')$.

\begin{proof} 
On the one hand, $\g\g'=\f g\f' g'=\f (g.\f') gg'= \psi gg'$, with $\psi=\f(g.\f')$. As above set $\psi_t=\psi|_{X_t}$, and as $gg'=(g_0g'_{\s(0)},g_1 g'_{\s(1)})\s\s'$, the embedding is:
$$\F(\g\g')=(\psi_0 g_0 g'_{\s(0)},\psi_1 g_1 g'_{\s(1)})\s\s'. $$
On the other hand:
\begin{eqnarray*}
\F(\g)\F(\g') &=& (\f_0g_0, \f_1g_1)\s(\f_0'g_0' ,\f_1'g_1')\s' \\
&=& (\f_0g_0 \f_{\s(0)}'g_{\s(0)}',\f_1g_1 \f_{\s(1)}'g_{\s(1)}')\s\s' \\
&=& (\f_0(g_0.\f_{\s(0)}') g_0g_{\s(0)}',\f_1(g_1.\f_{\s(1)}') g_1g_{\s(1)}')\s\s'
\end{eqnarray*}
There remains to check $\psi_t=\f_t(g_t.\f_{\s(t)}')$, and indeed for any $y \in X_t \simeq X$:
\begin{eqnarray*}
\psi_t(y) &=& \psi(ty)=(\f(g.\f'))(ty)=\f(ty)((g.\f')(ty))=\f(ty)\f'(ty.g) \\
&=& \f(ty) \f'(\s(t) (y.g_t))=\f_t(y)\f_{\s(t)}'(y.g_t)=\f_t(y)(g_t.\f_{\s(t)}')(y).
\end{eqnarray*}
\end{proof} 

The embedding $\psi: \G_\o \hookrightarrow \G_{\s\o} \wr S_2$ can also be used at the word level. Let us describe the {\it rewriting process} of a given word of the form $w=a^{i_1}k_1ak_2\dots ak_ra^{i_2}$, for $k_i=\f_{f_i}v_i$ in $\{\f_f v| v \in \{id,b_\o,c_\o,d_\o\},f \in F\}$, which is said {\it pre-reduced}. Note that any reduced representative word in $\G_\o$ has this form.

Any such word can be rewritten $w=k_1^ak_2k_3^ak_4\dots k_ra^{i_3}$ or $w=k_1k_2^a\dots k_r a^{i_4}$, where $i_j \in \{0,1\}$. Note also that $k=\f_f v_\o=(u^v(\o_0),\f_fv_{\s\o})=(u^v(\o_0),k)$ and $k^a=(k,u^v(\o_0))$ and remind $u^v(\o_0) \in \{id,a\}$. This permits to rewrite $w=(w_0,w_1)\s(w)$ via the wreath product embedding, and $w_0,w_1$ appear as products of the type $w_0=a^{\e_1}k_2a^{\e_3}k_4\dots k_r$ and $w_1=k_1a^{\e_2}\dots a^{\e_r}$ for $\e_j \in \{0,1\}$. Now reduce $w_0,w_1$ to obtain pre-reduced words in $S_{\s\o}$, by using the rule $k_ia^0k_{i+1}=k_ik_{i+1}=\f_{f_i}v_i\f_{f_{i+1}}v_{i+1}=\f_{(f_if_{i+1})}(v_iv_{i+1})$. 

The rewritting process associates to $w$ this representation $w=(w_0,w_1)\s(w)$ where $\s(w)$ is the image of $w$ in the quotient group $S_2$ acting at the root.

The process can be iterated, which provides for any level $p$ a representation $w=(w_1,\dots,w_{2^p})\s_p(w)$ with $w_i$ pre-reduced words in $S_{\s^p\o}$ and $\s_p(w) \in Aut(T_2(p))=S_2 \wr \dots \wr S_2$ with $p$ factors describes the action of $w$ on the subtree $T_2(p)$ consisting of the first $p$ levels.

\begin{definition}
Given a pre-reduced word $w$ in $S_\o$, define $T(w)$, called {\it minimal tree} of $w$, to be the minimal regular rooted subtree of $T$ such that for any leaf $z$ in $\partial T(w)$, one has $|w_z|_{pr} \leq 1$ for the word $w_z$ obtained by iterated rewritting process, where $|w|_{pr}$ is the number of factors $k_i=\f_{f_i} v_i$ in a pre-reduced word $w_z$. Remind that a subtree $T$ is rooted if it contains the root and regular if any vertex in $T$ either has its two descendants in $T$ or none of them. Note that the leaves of $\partial T(w)$ have depth at most $\log_2|w|$ because $w_0,w_1$ have length $\leq \frac{|w|+1}{2}$.
\end{definition}

The tree $T(w)$ allows a nice description of the action of a word $w$ in $\G_\o$ on $T$. Indeed, the group element $\g=_{\G_{\o}} w$ is described by the following data. First the minimal tree $T(w)$, secondly the permutations $\s_v \in S_2$ describing the action at vertex $v$ in the interior of $T(w)$ and third the short words $w_z=a^{\e_z}\f_{f_z}v_za^{\d_z}$ for $z \in \partial T(w)$. The latter can be refined in the tree action $a^{\e_z}v_za^{\d_z}$ as an automorphism of $T_z$ the subtree issued from the vertex $z$ and the boundary function $\f(x)=id_F$ for all $x \in \partial T_z \setminus \{z\e_z(1)\r\}$ and $\f(z\e_z(1)\r)=f_z$.

\begin{figure}
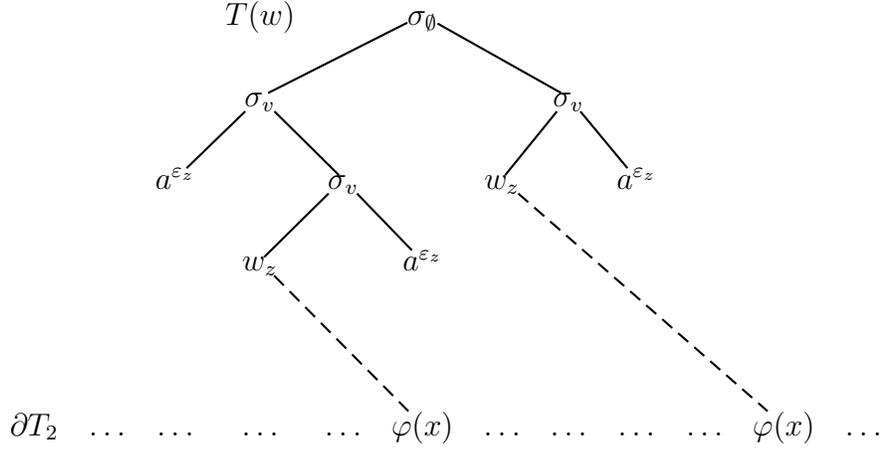

\begin{center}
$\begin{psmatrix}[rowsep=0.6cm,colsep=0.4cm]
& & & T(w) & & \s_\emptyset & & & & & \\
& & & \s_v & & & & \s_v & & & \\
& & a^{\e_z} & & \s_v & & w_z & & a^{\e_z} & & \\ 
& & & w_z & & a^{\e_z} & & & & & \\
& & & & & & & & & & \\
\partial T_2 & \dots & \dots & \dots & \dots & \f(x) & \dots & \dots & \dots & \dots & \f(x) & \dots 
\psset{arrows=-}
\ncline{1,6}{2,4}
\ncline{1,6}{2,8}
\ncline{2,4}{3,3}
\ncline{2,4}{3,5}
\ncline{2,8}{3,7}
\ncline{2,8}{3,9}
\ncline{3,5}{4,4}
\ncline{3,5}{4,6}
\ncline[linestyle=dashed]{4,4}{6,6}
\ncline[linestyle=dashed]{3,7}{6,11}
\end{psmatrix}$
\end{center}
\caption{\label{fig1} Description of the action of a word $w$ via the minimal tree $T(w)$}
\end{figure}

Call $z \in \partial T(w)$ an {\it active leaf} if $|w_z|_{pr}=1$, an {\it inactive leaf} if $|w_z|_{pr}=0$, denote $S(w)$ the set of active leaves of $w$, and $s(w)=\#S(w)$ its size. Mind that if $z$ is inactive then $w_z=a^{\e_z} \in S_2$ is just a permutation. Note also that regarding the rules of rewritting process $|\f_{id_F}|_{pr}=1$ so that an active leaf does not necessarily act on the tree, nor its boundary (see figure \ref{fig1}).

However, it appears from the description above that the support of $\f:X \rightarrow F$ associated to $w$ is included in $\{z\e_z(1)\r|z \textrm{ is an active leaf}\}$. Call this set the {\it support a priori} of $\f$, denoted $supp^{ap}(\f)$. Note that for $w=a^{i_1}\f_{f_1}v_1a\dots a\f_{f_r}v_ra^{i_2}$, if $i_1,i_2, v_j$ are kept fixed and $(f_1,\dots,f_r)$ are taking all possible values, then any function with support included in $supp^{ap}(\f)$ can be obtained. In particular, the support a priori of the function $\f$ for the word $w$ depends only on the image in the quotient $\G_\o \rightarrow G_\o$, $w \mapsto g=a^{i_1}v_1a\dots av_ra^{i_2}$.

\begin{remark}\label{foret}
In order to clarify the notion of support a priori, let us introduce a notion associated to the word combinatorics of the rewritting process of a fixed word $w$. For $z$ an active leaf of $T(w)$, the rewritting process provides $f_z$ as a product of terms $f_i^{z'}$ in $w_{z'}$ (where $z'$ is the first ascendant of $z$), which are themselves products of terms $f_j^{z''}$ in $w_{z''}$, etc. so eventually $f_z$ is a product of terms $(f_j)_{j \in J(z)}$ for a subset $J(z) \subset \{1,\dots,r\}$. Note that in this situation: $\bigsqcup_{z \in S(w)} J(z) =\{1,\dots,r\}$.

More generally, if $y$ is a vertex of $T$, the rewritting process of $w$ provides $w_y=a^{i_1^y}\f_{f_1^y}v_1^ya\dots a\f_{f_{r^y}^y}v_{^y}^yra^{i_2^y}$, and each factor $f_i^y$ is obtained as an ordered product:
\begin{eqnarray} \label{prod} f_i^y=\prod_{j \in I(y,i)}f_j^{y'}, \end{eqnarray}
where $y'$ is the first ascendant of $y$, and $\sqcup I(y,i) = \{1,\dots,r_{y'}\}$ where the disjoint union runs over all direct descendants $y$ of $y'$ and $i \in \{1,\dots, r_y\}$. 

Now the graph with vertex set $(f_i^y)_{y \in T, i \in \{1,\dots,r_y\}}$ and edges pairs of elements appearing on different sides of all possible products (\ref{prod}) is a forest, called the {\it ascendance forest} of $w$. It describes the combinatorics of the rewritting process of the word $w$. It depends only on $g=a^{i_1}v_1a\dots av_ra^{i_2}$. Precisely, this graph is a finite union of trees rooted in $f_z$ for each $z \in S(w)$ and with respective sets of leaves $\{f_j|j \in J(z)\}$. The ordered product $f_z=\prod_{j \in J(z)} f_j$ shows that indeed, the function $\f$ can take any value at the point $z\e_z(1)\r$.
\end{remark}

\begin{proposition}(Activity of a pre-reduced word)\label{activite}
The activity $s(w)$ of a pre-reduced word $w=a^{i_1}\f_{f_1}v_1a\dots a\f_{f_r}v_ra^{i_2}$ in $\G_\o,S_\o$, which counts equivalently
\begin{enumerate}
\item the size of the set $S(w)$ of active leaves in the minimal tree $T(w)$,
\item the number of components (i.e. trees) in the ascendance forest of $w$,
\item the size of the support a priori $supp^{ap}(\f)$,
\item the size of the inverted orbit $\Oc(g^{-1})$ of the word $g^{-1}$ in the sense of \cite{BE},
\end{enumerate}
depends only on the word $\underline{w}=a^{i_1}v_1a\dots av_ra^{i_2}$ in $G_\o$ and satisfies under rewritting process $w=(w_0,w_1)\s(w)$, with $w_0,w_1$ in $S_{\s\o}$:
$$s(w)=s(w_0)+s(w_1). $$
Also there exists a constant $C$ depending only on $\#F$ such that:
$$\#\{\g \in \G_\o| \exists w=_{\G_\o}\g,s(w) \leq s\} \leq C^s.$$
\end{proposition}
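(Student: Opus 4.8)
The plan is to treat the four equivalences and the two inequalities separately. The equivalence of (1), (2) and (3) is essentially bookkeeping already set up: the support a priori is by definition the set $\{z\e_z(1)\r \mid z \in S(w)\}$, and the map $z \mapsto z\e_z(1)\r$ is injective on $\partial T(w)$ since distinct leaves of the minimal tree label disjoint subtrees; this gives $\#\mathrm{supp}^{ap}(\f) = s(w) = \#S(w)$. For (2), Remark \ref{foret} already records that the ascendance forest has one tree rooted at $f_z$ for each $z \in S(w)$, so its number of connected components is $s(w)$. For (4), I would recall the definition of the inverted orbit from \cite{BE}: $\Oc(g^{-1})$ is the set of points $\r \cdot g_1^{-1}, \r\cdot g_2^{-1}, \dots$ obtained along a word representing $g^{-1}$, equivalently the set of first-level-orbit points of $X$ that the ``active'' prefixes of $w$ can reach; one checks directly that this coincides with $\mathrm{supp}^{ap}(\f)$, since the factor $\f_{f_i}$ is dropped into position $\r v_1 \cdots v_{i-1}$-translated-by-the-$a$'s, which is exactly a point of the inverted orbit of $\underline w^{-1}$. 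The dependence on $\underline w$ only (not on the $f_i$) is immediate from any of these descriptions, since $S(w)$, $T(w)$ and the forest shape are built from the rewriting of $\underline w$ alone, the $f_i$ riding along passively.

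The additivity $s(w)=s(w_0)+s(w_1)$ is the heart of the matter and the step I expect to be the main obstacle, mostly because it requires checking that the minimal tree behaves well under one step of rewriting. The clean statement is $T(w) = \{\emptyset\} \cup 0\,T(w_0) \cup 1\,T(w_1)$ (the rooted tree whose two subtrees below the root are $T(w_0)$ and $T(w_1)$), together with the matching of active leaves: a leaf of $T(w)$ lying in the subtree below $t\in\{0,1\}$ is active iff the corresponding leaf of $T(w_t)$ is active. This follows because the iterated rewriting process of $w$ restricted to the subtree below $t$ is, by construction, exactly the iterated rewriting process of $w_t$ in $S_{\s\o}$: the data $(w_1,\dots,w_{2^p})$ at level $p$ for $w$ is the concatenation of the level-$(p-1)$ data for $w_0$ and for $w_1$. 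Hence the minimality condition ``$|w_z|_{pr}\le 1$ at every leaf $z$'' decouples across the two subtrees, giving both the tree identity and $S(w) = 0\,S(w_0)\sqcup 1\,S(w_1)$, whence the cardinalities add. One must also dispose of the degenerate small cases ($|w|_{pr}\le 1$, where $T(w)$ is the trivial tree), which are immediate.

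For the counting bound $\#\{\g \mid \exists w =_{\G_\o}\g,\ s(w)\le s\} \le C^s$, the idea is that the description of $\g$ from Figure \ref{fig1} is determined by a bounded amount of data per active leaf. Precisely, fix a pre-reduced $w$ with $s(w)\le s$. By the description following the definition of $T(w)$, $\g$ is encoded by: the shape of the minimal tree $T(w)$, the permutations $\s_v\in S_2$ at interior vertices, and the short words $w_z = a^{\e_z}\f_{f_z}v_z a^{\d_z}$ at the leaves. Since $T(w)$ is a rooted regular binary tree with $s(w)+(\text{inactive leaves})$ leaves, and every interior vertex lies on a path to an active leaf (a subtree of $T(w)$ all of whose leaves are inactive could be pruned, contradicting minimality), the number of interior vertices and the number of leaves are both $O(s)$; so the tree shape has at most $2^{O(s)}$ possibilities and the collection of permutations $(\s_v)$ at most $2^{O(s)}$. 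At each of the $O(s)$ leaves the datum $(\e_z,\d_z,v_z,f_z)$ ranges over a set of size $2\cdot 2\cdot 4\cdot \#F$, contributing $(16\,\#F)^{O(s)}$. Multiplying, the number of elements $\g$ so described is at most $C^s$ for a constant $C=C(\#F)$, which is the claim. The one point to verify carefully here is the bound on the number of inactive leaves: minimality of $T(w)$ forces every leaf to be the child of a vertex from which some active leaf is reachable, so the inactive leaves number at most (interior vertices)$+1 = O(s)$; alternatively one bounds the total number of leaves by twice the number of ``branching'' ancestors of active leaves, which is $\le 2s$. This makes all the exponents linear in $s$ and finishes the proof.
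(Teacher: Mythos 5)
Your equivalences (1)--(4), your explicit tree-decomposition argument $T(w)=\{\emptyset\}\cup 0\,T(w_0)\cup 1\,T(w_1)$ for the additivity $s(w)=s(w_0)+s(w_1)$, and your overall strategy for the counting bound all follow the same route as the paper (which is terser on (1)--(3) and on additivity, asserting they ``follow from the descriptions above'', and proves (4) by essentially the induction you sketch).

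The one genuine gap is in the key step of the counting bound, namely showing $\#\partial T(w)=O(s(w))$. Your stated premise, ``every interior vertex lies on a path to an active leaf,'' does not give the conclusion ``the number of interior vertices and the number of leaves are both $O(s)$'': a priori a long chain of interior vertices could all lie on the path to a single active leaf. Your first alternative (inactive leaves $\le$ interior vertices $+1=O(s)$) is circular, since bounding the number of interior vertices is exactly what is at stake, and your second (total leaves $\le$ twice the number of ``branching'' ancestors $\le 2s$) is left unjustified. The paper closes this by an induction on $s(w)$, establishing $2s(w)\geq\#\partial T(w)$ whenever $s(w)\geq 1$: if $s(w)\geq 2$ then $T(w)$ is non-trivial, and pre-reduction of $w$ forces each of $w_0,w_1$ to retain at least one block $\f_f v$, hence $s(w_0),s(w_1)\geq 1$; the inductive hypothesis then gives $\#\partial T(w)\leq\#\partial T(w_0)+\#\partial T(w_1)\leq 2s(w_0)+2s(w_1)=2s(w)$. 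This is the missing ingredient; once it is in place, your Catalan-number count of tree shapes, $2$ choices per interior permutation, and $O(\#F)$ choices per leaf datum give $C^s$ as claimed. (In fact one can sharpen this: an interior vertex $v$ has $|w_v|_{pr}\geq 2$, so both its children inherit $|.|_{pr}\geq 1$; thus a non-trivial $T(w)$ has no inactive leaves and $s(w)=\#\partial T(w)$ exactly, but the inductive bound $2s\geq\#\partial T(w)$ suffices.)
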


\begin{proof}
The equivalence of (1), (2) and (3), as well as the behavior of activity function under rewritting process follow from the descriptions above. Proceed by induction on $r$ to show equivalence with (4). If $w=a^{i_1}\f_{f_1}v_1a\dots a\f_{f_r}v_ra^{i_2}=_{F\wr \G}\f g$ then $w\f_{f_n}v_n=\f (g.\f_{f_n})gv_n$. The point $g^{-1}(1^\infty)$ is added to the support a priori of $\f$. This shows $supp^{ap}(\f)=\{(a^{i_1}v_1\dots v_ka^{i_2})^{-1}(1^\infty)|k \leq n\}=\Oc(g^{-1})$. Mind that the inverse appears as a difference with \cite{BE} notations, replacing $g f$ by $\f g$ for elements of $F \wr G$. Then $\f g =(g.f)g $ and $g^{-1}supp (f)=supp(\f)$.

There remains only to show that the number of elements described grows at most exponentially fast with $s(w)$. First check that $2s(w) \geq \#\partial T(w)$ when $s(w) \geq 1$, by induction on $s(w)$. If $|w|_{pr}=1$, then $T(w)$ is just the root of $T$. Now if $s(w) \geq 2$, then $s(w_0),s(w_1) \geq 1$ by pre-reduction of $w$, so that induction ensures $2s(w_t) \geq \#\partial T(w_t)$, and the result follows from $\#\partial T(w_0)+\#\partial T(w_1) \geq \#\partial T(w)$ by construction of minimal trees.
Now if $s(w) \leq s$, the minimal tree $T(w)$ has size $\leq 2s$. There is $4^{2s}$ possibilities for $T(w)$ (Catalan numbers), and then $2^{\#\textrm{interior}(T(w))}\leq 2^{2s}$ choices for the interior permutations $\s_v$ for interior vertices $v$ and finally $(2^2.4.\#F)^{\#\partial T(w)}\leq C^{2s}$ choices for the boundary short words $a^{\e_z}\f_{f_z}v_za^{\d_z}$.
\end{proof}

\begin{corollary}\label{actgro} The relation between word activity and growth function is two-fold:
\begin{enumerate}
\item $b_{\G_\o}(r) \geq \#F^{s(w)}$ for any $|w| \leq r$.
\item $b_{\G_\o}(r) \leq C^{\max\{s(w)|r \geq |w|\}}.$
\end{enumerate}
\end{corollary}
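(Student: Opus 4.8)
The plan is to read both inequalities directly off Proposition \ref{activite} together with Remark \ref{foret}, with essentially no additional computation.

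For part (1), I would fix a pre-reduced word $w=a^{i_1}\f_{f_1}v_1a\cdots a\f_{f_r}v_ra^{i_2}$ with $|w|\le r$, keep its underlying word $\underline w=a^{i_1}v_1a\cdots av_ra^{i_2}$ in $G_\o$ fixed, and let the labels $(f_1,\dots,f_r)\in F^r$ vary freely. Every word $w'$ obtained this way is again pre-reduced and has exactly the same length as $w$, since one only substitutes a generator $\f_{f_i}v_i\in S_\o$ by another generator $\f_{f_i'}v_i\in S_\o$; hence $|w'|\le r$ and $w'$ represents an element of $S_\o^r$. By part (3) of Proposition \ref{activite} each $w'$ represents an element $\f'g$ with the \emph{same} $g\in G_\o$ and with $\f'$ supported on the fixed set $supp^{ap}(\f)$ of cardinality $s(w)$. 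Remark \ref{foret} moreover identifies the value of $\f'$ at the point of $X$ attached to an active leaf $z$ with the ordered product $\prod_{j\in J(z)}f_j'$, where the index sets $J(z)$, $z\in S(w)$, form a partition of $\{1,\dots,r\}$; setting all but one label equal to $id_F$ shows that the resulting map $F^r\to F^{s(w)}$ is onto, so every $\f'\colon X\to F$ supported on $supp^{ap}(\f)$ is realised. Since distinct $\f'$ give distinct elements of $\G_\o=\SS_X F\rtimes G_\o$, this yields at least $\#F^{s(w)}$ distinct elements in $S_\o^r$, whence $b_{\G_\o}(r)=|S_\o^r|\ge\#F^{s(w)}$.

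For part (2), I would set $M=\max\{s(w)\mid w\text{ pre-reduced},\ |w|\le r\}$ and observe that any $\g\in\G_\o$ with $|\g|\le r$ has a geodesic representative word, which is reduced, hence pre-reduced, of length $|\g|\le r$, and therefore of activity $\le M$. Thus every element counted by $b_{\G_\o}(r)$ lies in $\{\g\in\G_\o\mid\exists\,w=_{\G_\o}\g,\ s(w)\le M\}$, which by the last inequality of Proposition \ref{activite} has at most $C^M$ elements; hence $b_{\G_\o}(r)\le C^{\max\{s(w)\mid r\ge|w|\}}$.

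I expect the only delicate point to be the surjectivity assertion inside part (1): one really needs \emph{all} $\#F^{s(w)}$ functions on the support a priori, not merely those valued in some proper subgroup generated by a part of the $f_i$. This is exactly the content of Remark \ref{foret} — the ascendance forest splits into one tree per active leaf, so the values at different active leaves are controlled by disjoint blocks of labels and hence are mutually independent, while each individual value is an unconstrained ordered product in $F$ and so sweeps out all of $F$. Everything else is bookkeeping built on Proposition \ref{activite}.
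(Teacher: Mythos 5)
Your proof is correct and follows exactly the route the paper intends: the paper's own proof of this corollary is a one-liner ("Point (1) is clear from remark \ref{foret} and point (2) from proposition \ref{activite}"), and what you have written is precisely a careful unpacking of why those references suffice — varying the $F$-labels over the partition $\sqcup_{z\in S(w)}J(z)$ of $\{1,\dots,r\}$ to sweep out all of $F^{s(w)}$ for part (1), and combining geodesic $\Rightarrow$ pre-reduced with the cardinality bound $\#\{\g\mid\exists w=_{\G_\o}\g,\,s(w)\le s\}\le C^s$ for part (2).
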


In particular, word activity governs the growth function.

\begin{proof}
Point (1) is clear from remark \ref{foret} and point (2) from proposition \ref{activite}.
\end{proof}

\section{Technics of estimation}

\subsection{Growth Lemma} The following lemma is used to estimate upper bounds on the growth of activity hence on the growth of groups. It improves on previous versions such as the Growth Theorem in \cite{MP} and Lemma 4.3 in \cite{BE} by keeping track of the constants in terms of the bound on the sequence $p(r)$ of variable depth of recursion.

\begin{lemma}\label{growthlemma}
Given $\eta$ and a parameter $\l \in [0,1]$, set $\a=\frac{\log(2)}{\log(2)-\l\log(\eta)}$, so that $\a$ satisfies $2=\left( \frac{2}{\eta^\l}\right)^\a$.

Let $\D:\N \rightarrow \N$ be a function such that for any $r$ there exists $q(r)\leq p(r)$ and $l_1,\dots,l_{2^{p(r)}}$ integers such that, for a constant $C$:
\begin{enumerate}
\item $l_1+\dots +l_{2^{p(r)}} \leq \eta^{q(r)}r+2^{p}C$,
\item $\D(r) \leq \D(l_1)+\dots+\D(l_{2^{p(r)}})$, 
\item $\frac{q(r)}{p(r)}\geq \l$.
\end{enumerate}
Suppose moreover that $p(r)\leq P$ is bounded. Then $\D(r) \leq L r^\a$ for some constant $L=L(C,P)$.

Assume given a trivial bound $\D(r) \leq Kr$. Then $L$ can be chosen $L=A^P$ for $A$ depending only on $C$ and $K$.
\end{lemma}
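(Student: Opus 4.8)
The plan is to prove, by strong induction on $r$, the slightly strengthened bound $\D(r)\le Lr^{\a}-M$ with $M=L/2$; subtracting the constant $M$ is what will let the recursion absorb the additive term $2^{p}C$ in hypothesis (1), which is the only real difficulty. Throughout one uses the defining relation $2=(2/\eta^{\l})^{\a}$ in its two equivalent forms $2^{p(1-\a)}\eta^{\l p\a}=1$ and $2^{p(1-\a)}\,2^{p}\,\eta^{\l p(\a-1)}=2^{p/\a}$. First one disposes of $\l=0$, where $\a=1$ and the bound $\D(r)\le Kr$ already reads $\D(r)\le Kr^{\a}$; so assume $0<\l\le1$, hence $0<\a<1$ and, since $q\ge\l p$ and $0<\eta<1$, also $\eta^{q}\le\eta^{\l p}\le\eta$. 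Then one fixes an integer $r_{0}$ large enough that: (i) $\eta r_{0}+2^{P}C<r_{0}$, so that $l_{i}\le\sum_{j}l_{j}\le\eta^{q(r)}r+2^{P}C<r$ for all $r\ge r_{0}$ and every $i$ (this makes the $l_{i}$ legitimate targets of the inductive hypothesis; we assume $p(r)\ge1$, as needed for (3)); and (ii) $2\a C\,2^{P/\a}\,r_{0}^{\a-1}\le1$, possible because $\a-1<0$. Finally set $L:=2Kr_{0}^{1-\a}$ and $M:=L/2=Kr_{0}^{1-\a}$. The trivial bound $\D(r)\le Kr$ is used only to control $\D$ on the finite range $r<r_{0}$, which is why the constant $L$ in the first assertion is allowed to depend on $K$.

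For the base case $1\le r<r_{0}$ one checks directly that $\D(r)\le Kr\le Kr_{0}^{1-\a}(2r^{\a}-1)=Lr^{\a}-M$, using $r=r^{1-\a}r^{\a}\le r_{0}^{1-\a}r^{\a}$ and $2r^{\a}-1\ge r^{\a}\ge1$.

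In the inductive step $r\ge r_{0}$ one takes $q\le p$ (with $p\ge1$) and $l_{1},\dots,l_{2^{p}}$ from the hypothesis, discards the indices with $l_{i}=0$ (where $\D(0)=0$, which also handles the case that none survive, since then $\D(r)\le0\le Lr^{\a}-M$), and applies the inductive hypothesis to the remaining $n\ge1$ terms, getting $\D(r)\le L\sum_{i}l_{i}^{\a}-nM$. When $n=1$ this is already $\le Lr^{\a}-M$ since $l_{j}<r$. When $n\ge2$ one combines the concavity (power-mean) inequality $\sum_{i}l_{i}^{\a}\le n^{1-\a}(\sum_{i}l_{i})^{\a}$ with $n\le2^{p}$, hypothesis (1), the first identity above, and the concavity estimate $(a+h)^{\a}-a^{\a}\le\a a^{\a-1}h$ applied with $a=\eta^{\l p}r$ and $h=2^{p}C$; after simplifying via the second identity this gives $\sum_{i}l_{i}^{\a}\le r^{\a}+\a C\,2^{p/\a}\,r^{\a-1}$, whence $\D(r)\le Lr^{\a}+L\a C\,2^{p/\a}r^{\a-1}-nM\le Lr^{\a}-M$, because $n\ge2$ leaves a spare $-M$ and $L\a C\,2^{p/\a}r^{\a-1}\le L\a C\,2^{P/\a}r_{0}^{\a-1}\le L/2=M$ by (ii). This closes the induction, giving $\D(r)\le Lr^{\a}$ with $L=L(C,P,K)$. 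For the refined form one notes that $r_{0}$ may be taken so that $r_{0}^{1-\a}$ equals a constant depending only on $C,\eta$ times $2^{P/\a}=(2/\eta^{\l})^{P}\le(2/\eta)^{P}$; then $L=2Kr_{0}^{1-\a}\le\mathrm{const}(C,K,\eta)\cdot(2/\eta)^{P}$, and since $P\ge1$ that constant can be absorbed into the base to yield $L\le A^{P}$ with $A$ depending only on $C$ and $K$.

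The step I expect to be the genuine obstacle is the one just emphasized: with the additive $2^{p}C$ present, the naive ansatz $\D(r)\le Lr^{\a}$ does not reproduce itself under the recursion — one only recovers $\D(r)\le Lr^{\a}+\mathrm{const}$ — so everything rests on observing that after the power-mean step this error is multiplied by $r^{\a-1}$, which is small for large $r$. Converting that observation into an actual induction forces the corrected ansatz $\D(r)\le Lr^{\a}-M$ with $M$ comparable to $L$, the separate treatment of $n=1$ (where the error term would otherwise have the wrong sign), and the explicit tracking of how $L$ grows with the depth bound $P$, which is exactly what the quantitative second half of the statement demands.
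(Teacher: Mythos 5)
Your proof is correct, and it takes a genuinely different route from the paper's. The paper's argument replaces the naive ansatz $Lr^{\a}$ by the auxiliary function $\D^{\ast\ast}(r)=(r-r^{1/2})^{\a}$ (extended affinely for small $r$ so as to be concave everywhere), then applies Jensen's inequality directly to $\D^{\ast\ast}$: the terms $\D^{\ast}(l_i)$ are bounded by $2^{p}\D^{\ast}\bigl(\tfrac{1}{2^{p}}\sum l_i\bigr)$, and the $-r^{1/2}$ correction inside the $\a$-th power is what swallows the additive error $2^{p}C$ once $r$ is large. Your version instead takes the ansatz $Lr^{\a}-M$ with $M$ a flat constant, applies the power-mean inequality $\sum l_i^{\a}\le n^{1-\a}(\sum l_i)^{\a}$ (concavity of $x\mapsto x^{\a}$ rather than of a bespoke auxiliary function), and finishes with the first-order bound $(a+h)^{\a}\le a^{\a}+\a a^{\a-1}h$ — the resulting error $L\a C\,2^{p/\a}r^{\a-1}$ is paid for by the spare $-M$ that appears as soon as two or more children survive, while the $n=1$ case is free because $l_1<r$. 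The two proofs rest on the same analytic fact (the additive error becomes $O(r^{\a-1})$ after the recursive combination), but yours avoids having to design $\D^{\ast\ast}$ and check its concavity, at the small cost of splitting the induction into the cases $n=1$ and $n\ge 2$ and of discarding zero children by hand; it is a bit more elementary and the bookkeeping of how $r_0$ (hence $L$) grows with $P$ is more transparent, so the $L=A^{P}$ refinement drops out cleanly. Both proofs share the same reading of the first assertion, namely that the ``constant'' $L(C,P)$ implicitly depends on the finitely many values $\D(0),\dots,\D(r_0)$ (the paper chooses $L$ so that $L\D^{\ast\ast}\ge\D$ on $[0,M]$; you control this range via the trivial bound $Kr$), so your remark on that point is consistent with the paper.
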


\begin{proof}
Choose $R_0$ big, to be determined later. Choose $R \geq R_0$ large enough so that the function:
$$ \D^{\ast \ast}(r)= \left\{ \begin{array}{ll} \left(r-r^{\frac{1}{2}} \right)^\a & \textrm{if $r\geq R$,} \\ 1+\frac{r}{R}(\D^{\ast \ast}(R)-1) &
\textrm{if $r\leq R$,} \\ 
\end{array} \right.$$ 
is concave (it is also non decreasing). Choose $M$ large enough so that for all $r\geq M$, $\frac{\eta^{q(r)}}{2^{p(r)}}r\geq R$, and let $L\geq 1$ be large enough so that $\D^\ast (r) = L \D^{\ast \ast}(r) \geq \D(r)$ for all $r \leq M$. Let $r >M$, there exists $p(r),q(r),l_i$, with $\D(l_i)\leq \D^\ast(l_i)$ by induction, and using successively (2), induction, concavity of $\D^\ast$, (1) and the choice of $M$:
\begin{eqnarray*}
\D(r)&\leq& \D(l_1)+\dots+\D(l_{2^p}) \\
&\leq& \D^\ast(l_1)+\dots+\D^\ast(l_{2^p}) \\
&\leq& 2^p\D^\ast\left(\frac{1}{2^p}(l_1+\dots+l_{2^p}) \right) \\
&\leq& 2^p\D^\ast\left(\frac{1}{2^p}(\eta^qr+2^pC) \right) \\
&=& 2^p L\left( \left(\frac{\eta^{\frac{q}{p}}}{2} \right)^p r + C - \left( \frac{\eta^q}{2^p}r+C \right)^{\frac{1}{2}} \right)^\a, \\
&=& L\left( \left(2^{\frac{1}{\a}}\frac{\eta^{\frac{q}{p}}}{2} \right)^p r + 2^{\frac{p}{\a}}C - 2^{\frac{p}{\a}} \left( \frac{\eta^q}{2^p}r+C \right)^{\frac{1}{2}} \right)^\a, \\
\end{eqnarray*}
Now $\frac{q}{p} \geq \l$ ensures $\left(2^{\frac{1}{\a}}\frac{\eta^{\frac{q}{p}}}{2} \right)^p \leq1$, so:
\begin{eqnarray*}
\D(r)&\leq& L \left(r+ 2^{\frac{p}{\a}}C - 2^{\frac{p}{\a}} \left( \frac{\eta^q}{2^p}r+C \right)^{\frac{1}{2}}\right)^\a \\
&\leq& L\left(r-r^{\frac{1}{2}} \right)^\a=\D^\ast(r).
\end{eqnarray*}
The last inequality holds when $r$ is big enough so that:
$$2^{\frac{p}{\a}}\left(\frac{\eta^q}{2^p}r+C \right)^{\frac{1}{2}}-2^{\frac{p}{\a}}C \geq r^{\frac{1}{2}}.$$
Observe that $\frac{2}{\eta^\l}\left(\frac{\eta^\frac{q}{p}}{2} \right)^{\frac{1}{2}}\geq \sqrt{2\eta}>1 $ so the latter is true when:
$$ (\sqrt{2\eta})^{\frac{p}{2}}r^{\frac{1}{2}}  \geq r^{\frac{1}{2}} + 2^{\frac{p}{\a}}C,$$
which holds when $r \geq a_0^P=R_0=\frac{2^{\frac{2p}{\a}}C^2}{(\sqrt{2\eta}^{\frac{p}{2}}-1)^2}$ with a constant $a_0$ depending only on $C$. For $P$ big, $R=R_0$ and so $M=\left( \frac{2}{\eta} \right)^P R_0=\left( \frac{2}{\eta} a_0\right)^P$. It is sufficient to take $L\D^{\ast\ast}(M) \geq KM$ so $L\geq K\left(\frac{2}{\eta}a_0 \right)^P$.
\end{proof}

\subsection{Localization}
The asymptotic behavior of the growth of $\G_\o$ depends on the asymptotic of $\o$. On the other hand, the description of a ball of a given radius in $\G_\o$ requires only some first terms of $\o$. The following lemma of localization is helpful to study growth of groups $\G_\o$ for non periodic sequences $\o$. 

\begin{lemma}\label{localization}
Suppose that the sequence $\o$ is not asymptotically constant, then the ball $B_{\G_\o}(r)$ of radius $r$ for the word norm with respect to the generating set $S_\o=\{a\} \sqcup \{\f_fv|v \in \{id_{G_\o},b_\o,c_\o,d_\o\},f \in F\}$ depends only on $\o_0\o_1\dots\o_k$ for $k=\log_2(r)$.
\end{lemma}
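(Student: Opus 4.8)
The plan is to show that the action of the generators on the first $k$ levels of the tree is determined by $\o_0\dots\o_{k-1}$, and that any word of length $\le r$ acts trivially below level $k=\log_2 r$, so its image in $\G_\o$ is already visible inside the truncated group acting on $T_2(k)$ together with the boundary data, all of which depends only on the initial segment. First I would recall from the rewriting process (Section 3) that a pre-reduced word $w$ of length $|w|\le r$ has minimal tree $T(w)$ with leaves of depth at most $\log_2|w|\le\log_2 r=k$, since $w_0,w_1$ have length $\le\frac{|w|+1}{2}$ and this halving iterates. Hence the element $\g=_{\G_\o}w$ is completely described (cf.\ the discussion after Definition of $T(w)$ and Figure \ref{fig1}) by: the tree $T(w)\subseteq T_2(k)$, the interior permutations $\s_v\in S_2$ for $v$ interior, and the short boundary words $w_z=a^{\e_z}\f_{f_z}v_za^{\d_z}$ for $z\in\partial T(w)$, where each such $w_z$ encodes the tree automorphism $a^{\e_z}v_za^{\d_z}$ of $T_z$ together with the single boundary value $\f(z\e_z(1)\r)=f_z$.

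Next I would observe that every ingredient in this description lives in, or is named by, data depending only on $\o_0\dots\o_{k-1}$. The permutation data and the exponents $\e_z,\d_z$ are combinatorial and $\o$-independent; the finite group $F$ and the labels $f_z$ are $\o$-independent. The only $\o$-dependence is in the identity of the generators $b_\o,c_\o,d_\o$ themselves, i.e.\ in which automorphism of the subtree $T_z$ (rooted at depth $k$) the symbol $v_z$ denotes — but the action of $b_\o,c_\o,d_\o$ on the first $k$ levels of $T$ is governed, via the recursion (\ref{generateurs})--(\ref{defbcd}), solely by $\o_0,\dots,\o_{k-1}$. So two sequences $\o,\o'$ agreeing on the first $k$ terms yield the same description of $\g$ as acting on $T_2(k)$ with the same boundary function, hence the same element of the ball. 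This gives a natural bijection (respecting word norm) between $B_{\G_\o}(r)$ and $B_{\G_{\o'}}(r)$, so the ball depends only on $\o_0\dots\o_{k-1}$; relabelling $k=\log_2 r$ rounded up gives the statement.

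The one point requiring care — and the main obstacle — is the treatment of the subtree action $a^{\e_z}v_za^{\d_z}$ at the boundary leaves $z$: a priori this automorphism of $T_z$ does depend on the full tail $\s^k\o$, so it is \emph{not} true that $\g$ has the same action on all of $T$ for $\o$ and $\o'$. The hypothesis that $\o$ is not asymptotically constant is exactly what is needed here: it guarantees each $G_{\s^k\o}$ is infinite and, more to the point, that the assignment $z\mapsto(\text{short word }w_z)$ rather than $z\mapsto(\text{its tree action})$ is the correct invariant — i.e.\ distinct elements of the ball are already distinguished by the finite data $(T(w),(\s_v),(w_z))$, because a pre-reduced word of $\le 1$ letter in $S_{\s^k\o}$ that is nontrivial in $\G_{\s^k\o}$ stays nontrivial (generators have order $2$ since $\s^k\o$ is non-constant, and $\f_f\ne id$ for $f\ne id_F$), so the combinatorial description is faithful. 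Thus I would phrase the conclusion as: the set $B_{\G_\o}(r)$, encoded by these finite data, is literally the same set of formal descriptions regardless of which $\o$ with the given prefix one picks, and this set is what is meant by the ball depending only on $\o_0\dots\o_k$.
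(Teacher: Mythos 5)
Your proposal is correct and follows essentially the same route as the paper's proof: both rest on the fact that the rewriting process roughly halves word length, so that after $k=\log_2 r$ iterations only the prefix $\o_0\dots\o_k$ together with an $\o$-independent ball of bounded radius in $\G_{\s^k\o}$ remain. You package the iteration via the minimal tree $T(w)$ and its boundary data, whereas the paper iterates the embedding of Lemma \ref{CE} directly, and you usefully spell out why non-asymptotic-constancy is exactly the right hypothesis (each $\s^k\o$ must be non-constant for the small ball to be universal), a point the paper handles more tersely in its base case.
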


The Cayley graph $Cay(\G,S)$ of a group $\G$ with generating set $S$ is the colored graph with vertices $\g$ in $\G$ and edges $(\g,\g s)$ of color $s$ in $S$. The ball $B_\G(r)$ of radius $r$ is the subgraph obtained by restriction to vertices and ends of edges such that $|\g| \leq r$ for the word norm for $S$.

\begin{proof}
The ball $B_\o(1)$ of $\G_\o$ for the generating set $S_\o$ is independent of $\o$ among sequences that are not constant, it consists of the Cayley graph $Cay(F \times V, F \times V)$ together with an edge from the neutral element leading to the vertex $a$. By proposition \ref{CE}, the ball $B_{\G_\o}(r)$ can be described using $B_{\G_{\s\o}}(\frac{r+1}{2})$ and the wreath product recursion (\ref{generateurs}), i.e. $\o_0$. Indeed, an element $\g$ admits a reduced representative word $w=a^{i_1}k_1ak_2\dots ak_{r/2}a^{i_2}$ and so $\g=(\g_0,\g_1)\e^s$ with $|\g_0|,|\g_1| \leq \frac{r+1}{2}$ by rewritting process. By iteration, $B_{\G_\o}(r)$ is described by $B_{\G_{\s^k\o}}(\frac{r}{2^k}+1)$ and $\o_0\dots\o_k$.
\end{proof}

\begin{remark}
When $\o=0^\infty$ is constant, the generator $d_\o$ acts trivially on the rooted tree $T$, hence is identity, so that the Klein group $V$ degenerates into a group $S_2$, and $G_\o=\langle a,b_{0^\infty}|a^2=b^2=id \rangle=D_\infty$ is dihedral infinite. However, the whole sequence $\o$ is required to obtain this information. The group $\tilde{G}_{0^\infty}$ obtained by ``finite information'' (concretely as a limit group of $G_{0^k(012)^\infty}$ for instance) is in fact the group $\tilde{G}_{0^\infty} \simeq S_2 \wr_X G_{0^\infty}=\langle d_{0^\infty} \rangle \wr_X \langle a, b_{0^\infty} \rangle$, which is metabelien of exponential growth. Indeed, the wreath product embeddings $G_{0^k(012)^\infty} \hookrightarrow G_{0^{k-1}(012)^\infty} \wr S_2$, $b_k=(a,b_{k-1}$, $d_k=(1,d_{k-1})$ and $b_\infty=(a,b_\infty)$, $d_\infty=(1,d_\infty)$ coincide on the $k$ first levels. As the balls of radius 1 of the groups $G_{0^k(012)^\infty}$ and $\langle d_{0^\infty} \rangle \wr_X \langle a, b_{0^\infty} \rangle$ also coincide, the argument of the proof of lemma \ref{localization} shows that these groups coincide on their balls of radius $2^k$, showing isomorphism $\tilde{G}_{0^\infty} \simeq S_2 \wr_X G_{0^\infty}$. This group played a crucial role in the construction of antichains of growth functions, cf. section 6 in \cite{Gri1}. 
\end{remark}

\subsection{Asymptotic growth} Opposed to localization, the asymptotic behavior of the growth depends only on the asymptotic of $\o$.

\begin{proposition}\label{comparaisonfinie}
For generating sets $S_\o=\{a\} \sqcup \{\f_fv| v \in \{id,b_\o,c_\o,d_\o\},f \in F\}$, the growth function of $\G_\o=F\wr_X G_\o$ satisfies for all $r$:
$$ b_{\G_{\s\o}}\left(\frac{r-1}{2}\right) \leq b_{\G_\o}(r) \leq 2b_{\G_{\s\o}}\left(\frac{r+1}{2}\right)^2. $$
Also by iteration:
$$ b_{\G_{\s^k\o}}\left(\frac{r}{2^k}-1\right) \leq b_{\G_\o}(r) \leq 2^{2^{k}}b_{\G_{\s^k\o}}\left(\frac{r}{2^k}+1\right)^{2^k}. $$
\end{proposition}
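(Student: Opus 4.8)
The plan is to establish the two one-step inequalities and then iterate. For the \emph{lower bound}, I would use the canonical embedding $\F:\G_\o \hookrightarrow \G_{\s\o}\wr S_2$ of Lemma \ref{CE}. Take a geodesic representative in $\G_{\s\o}$ of any element $\g_0$ with $|\g_0|\leq\frac{r-1}{2}$; I claim the element $(\g_0,\mathrm{id})\,\mathrm{id}\in\G_{\s\o}\wr S_2$ lies in the image of $\F$ and has $\G_\o$-length at most $r$. Indeed, writing $\g_0$ as a word $w_0$ in $S_{\s\o}$ of length $\leq\frac{r-1}{2}$, one reconstructs a word $w$ in $S_\o$ realizing $\F^{-1}$ of $(\g_0,\mathrm{id})$ essentially by the inverse of the rewriting process: each letter of $w_0$ is lifted to one letter of $S_\o$ (with a compensating $a$ inserted between consecutive letters to route activity into the first subtree), so $|w|\leq 2|w_0|+1\leq r$. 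Since $\F$ is injective, distinct $\g_0$ give distinct elements of $B_{\G_\o}(r)$, whence $b_{\G_\o}(r)\geq b_{\G_{\s\o}}(\tfrac{r-1}{2})$.

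For the \emph{upper bound}, I would run the rewriting process once: any $\g\in\G_\o$ with $|\g|\leq r$ has a pre-reduced representative $w=a^{i_1}k_1a\cdots ak_ma^{i_2}$ with $m\leq r$, and $\F(\g)=(w_0,w_1)\s(w)$ where, as noted after Lemma \ref{CE}, $w_0,w_1$ are pre-reduced words in $S_{\s\o}$ of length $\leq\frac{r+1}{2}$ (since $w_0,w_1$ have at most $\lceil m/2\rceil\leq\frac{r+1}{2}$ factors). Thus $\g$ is determined by the triple $(\g_0,\g_1,\s(w))$ with $\g_j\in B_{\G_{\s\o}}(\tfrac{r+1}{2})$ and $\s(w)\in S_2$; since $\F$ is injective this gives $b_{\G_\o}(r)\leq 2\,b_{\G_{\s\o}}(\tfrac{r+1}{2})^2$.

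The iterated inequalities follow by an immediate induction on $k$: applying the one-step bounds to $\s^j\o$ for $j=0,\dots,k-1$, the lower bound composes as $b_{\G_\o}(r)\geq b_{\G_{\s\o}}(\tfrac{r-1}{2})\geq b_{\G_{\s^2\o}}(\tfrac{r-1}{2}\cdot\tfrac12-\tfrac12)\geq\cdots\geq b_{\G_{\s^k\o}}(\tfrac{r}{2^k}-1)$, using $\tfrac{s-1}{2}\geq\tfrac{r}{2^{j+1}}-1$ whenever $s\geq\tfrac{r}{2^j}-1$; symmetrically, each upper step squares the previous bound and multiplies by a factor $2$, producing the exponent $2^k$ and the prefactor $2^{1+2+\cdots}$, which is bounded by $2^{2^k}$, while the radius inflates from $\tfrac{r}{2^j}+1$ to $\tfrac{r}{2^{j+1}}+1$ using $\tfrac{s+1}{2}\leq\tfrac{r}{2^{j+1}}+1$ when $s\leq\tfrac{r}{2^j}+1$.

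The only genuinely delicate point is the lower bound: one must check that an element of the form $(\g_0,\mathrm{id})$ really is in $\mathrm{Im}(\F)$ and admits a short representative word. This is where the structure of $\F$ in Lemma \ref{CE} is used — the images of $a$, $v_\o$ and $\f_f$ generate a subgroup in which, by inserting suitable $a$'s, one can place all the "activity" of a word on the left subtree while leaving the right subtree trivial; the bookkeeping of exactly how many extra $a$'s this costs is what produces the factor $2$ (and the $-1$) in $\tfrac{r-1}{2}$. The upper bound, by contrast, is a routine consequence of the length estimate on $w_0,w_1$ already recorded in the discussion of the rewriting process.
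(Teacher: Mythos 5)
Your upper bound and the iteration step are correct and follow the paper's route exactly: send $\g \in B_{\G_\o}(r)$ to the triple $(\g_0,\g_1,\s(w))$ with $|\g_t| \leq \frac{r+1}{2}$, use injectivity of $\F$, and iterate with the bookkeeping you describe.

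The lower bound, however, has a genuine gap. You claim that for any $\g_0 \in B_{\G_{\s\o}}(\frac{r-1}{2})$ the element $(\g_0, \mathrm{id})\,\mathrm{id}$ of $\G_{\s\o}\wr S_2$ lies in $\mathrm{Im}(\F)$ and can be realized by a short word obtained by inserting $a$'s. This is false in general: $\F$ does not surject onto $\G_{\s\o} \times \{\mathrm{id}\} \times \{\mathrm{id}\}$. Concretely, if $k_j = \f_{f_j}v_j \in S_{\s\o}$ is lifted to $\bar{k}_j = \f_{f_j}\bar{v}_j \in S_\o$, then the conjugate $\bar{k}_j^a$ decomposes under rewriting as $(k_j, u^{\bar{v}_j}(\o_0))$, and the right coordinate $u^{\bar{v}_j}(\o_0)$ is typically $a$, not $\mathrm{id}$. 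So routing $w_0$ into the left subtree by inserting $a$'s inevitably deposits a nontrivial and uncontrollable word $w_1$ on the right; one cannot ``leave the right subtree trivial.'' The elements of $\mathrm{Im}(\F)$ of the form $(\g_0,\mathrm{id})$ form a proper (rigid-stabilizer--type) subgroup of $\G_{\s\o}\times\{\mathrm{id}\}$, and even when such an element does exist, your length bound $|w|\leq 2|w_0|+1$ would not apply to a word representing it.

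The fix is exactly what the paper does: drop the demand that the right component be trivial. Write a geodesic $w_0 = a^{i_1}k_1 a k_2 \dots a k_l a^{i_2}$ for $\g_0$ and build $w$ by interleaving the lifted letters $\bar{k}_j^a$ with a fixed stabilizer generator ($b_\o$ when $\o_0 \neq 1$, or $d_\o$ when $\o_0 = 1$, chosen so that its $u^\cdot(\o_0)$-component supplies the required interior $a$'s of $w_0$ on the left). Then $\F(w)=(w_0,w_1)\s(w)$ for \emph{some} $w_1$, $\s(w)$ --- with no claim that $w_1=\mathrm{id}$ --- and $|w|\leq 2|w_0|\leq r-1$. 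Injectivity of $\F$ alone then yields the injection $B_{\G_{\s\o}}(\frac{r-1}{2})\hookrightarrow B_{\G_\o}(r)$: if $w=_{\G_\o}w'$ then $w_0=_{\G_{\s\o}}w_0'$, whatever happens on the right. Your injectivity conclusion is correct, but it must be anchored to this weaker, true property of the construction, not to the false claim about $(\g_0,\mathrm{id})$. Note also that the choice of stabilizer letter does depend on $\o_0$, a case distinction your proposal omits.
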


\begin{proof}
Let $\g=\f g$ belong to $B_{\G_\o}(r)$. It admits a minimal representative word $w=_{\G_\o}\g$ of length $r$, which is uniquely described after rewritting process as $w=(w_0,w_1)\s(w)$ with $|w_0|,|w_1| \leq \frac{r+1}{2}$. Conclude that $\g$ is determined by two elements $\g_0,\g_1$ in $B_{\G_{\s\o}}(\frac{r+1}{2})$ and a permutation $\s(w)$ in $S_2$, which proves the upper bound.

Suppose $\o_0 \neq 1$ and let $\g_0$ belong to $B_{\G_{\s\o}}(\frac{r-1}{2})$. It admits a minimal representative word $w_0=a^{i_1}k_1ak_2a\dots ak_la^{i_2}$ of length $\leq \frac{r-1}{2}$. Set $w=b_\o \bar{k}_1^a b_\o \bar{k}_2^a \dots b_\o \bar{k}_l^a b_\o^{i_2}$ if $i_1=1$ and $w= \bar{k}_1 b_\o^a \bar{k}_2 b_\o^a \dots b_\o^a \bar{k}_l b_\o^{i_2a}$ if $i_1=0$ of length $\leq r$, where $\bar{k}_j=\f_{f_j}v^j_\o$ for $k_j=\f_{f_j}v^j_{\s\o}$. Proposition \ref{CE} and relations (\ref{generateurs}) from section \ref{definition} guarantee that $w=(w_0,w_1)\s(w)$ for some $w_1,\s(w)$. Now if $w=_{\G_\o}w'$, then $w_0=_{\G_{\s\o}}w_0'$, so that $B_{\G_{\s\o}}(\frac{r-1}{2})$ injects into $B_{\G_\o}(r)$. (Note that when $\o_0=1$, the same computation works if $b_\o$ is replaced by $d_\o$.)
\end{proof}

\section{Activity and growth}

\subsection{Activity of some words and lower bound on growth} Denote $||A||$ the operator norm of a $3\times 3$ matrix $A$ acting on $\C^3$. The proposition 4.7 in \cite{BE} generalizes as:
\begin{proposition} \label{somecontraction} Denote:
$$A_0=\left( \begin{array}{ccc} 2 & 0 & 1 \\ 0 & 2 & 1 \\ 0 & 0 & 1 \\ \end{array} \right), A_1=\left( \begin{array}{ccc} 2 & 1 & 0 \\ 0 & 1 & 0 \\ 0 & 1 & 2 \\ \end{array} \right),A_2=\left( \begin{array}{ccc} 1 & 0 & 0 \\ 1 & 2 & 0 \\ 1 & 0 & 2 \\ \end{array} \right). $$
There is $C>0$ such that for any $k$, there is a word $w_k$ in $\G_\o,S_\o$ such that $s(w_k) \geq 2^k$ and $|w_k| \leq C||A_{\o_0}\dots A_{\o_k}||$.
\end{proposition}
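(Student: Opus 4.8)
The plan is to reproduce, uniformly in $\o$, the recursive construction underlying \cite[Prop.~4.7]{BE}, using as the only group–theoretic input the three facts collected in Proposition \ref{activite}: that $s(w)$ depends only on the image $\underline w=a^{i_1}v_1a\dots av_ra^{i_2}$ of $w$ in $G_\o$, that $s(\underline w)=\#\Oc(\underline w^{\,-1})$, and that $s(w)=s(w_0)+s(w_1)$ for the rewriting $w=(w_0,w_1)\s(w)$. Hence it suffices to build, for each $k$, a word $g_k$ over $\{a,b_\o,c_\o,d_\o\}$ with $\#\Oc(g_k^{-1})\geq 2^k$ and $|g_k|\leq C\|A_{\o_0}\cdots A_{\o_k}\|$; inserting the letters $\f_f$ into $g_k$ arbitrarily then produces the required $w_k$ in $\G_\o$, with the same activity.

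I would construct $g_k$ by induction on $k$ through the embedding $G_\o\hookrightarrow G_{\s\o}\wr S_2$ of Lemma \ref{CE}, carrying along a triple of words at each level, one for each ``type'' $v\in\{b,c,d\}$, since this is what the $3\times 3$ matrices keep track of. For $k=0$ take a single active generator, so $\#\Oc=1$ and $|g_0|\leq 1$. For the step, assume that for the shifted sequence $\s\o$ one already has, for each type $v$, a word $g_{k-1}^{(v)}$ over $\{a,b_{\s\o},c_{\s\o},d_{\s\o}\}$ with $\#\Oc((g_{k-1}^{(v)})^{-1})\geq 2^{k-1}$ of the prescribed length. Using that $v_\o=(u^v(\o_0),v_{\s\o})$ and $av_\o a=(v_{\s\o},u^v(\o_0))$ with $u^v(\o_0)\in\{id,a\}$ (relations (\ref{generateurs})--(\ref{defbcd})), one assembles a word $g_k^{(v)}$ over $\{a,b_\o,c_\o,d_\o\}$ whose rewriting $g_k^{(v)}=(w_0,w_1)\s$ has each $w_t$ a level-$(k-1)$ word of the family for $\s\o$: the odd- and even-positioned letters of $g_k^{(v)}$ feed the two subtrees, the ``vertical'' letters $u^{\cdot}(\o_0)$ deposited in the opposite component play the role of the $a$-separators needed there, and the generator that is passive for $\o_0$ (the one with $u^{\cdot}(\o_0)=id$, namely $d$ if $\o_0=0$, $c$ if $\o_0=1$, $b$ if $\o_0=2$) is the one that must be spread out into the three-letter pattern recorded by the all-ones column of $A_{\o_0}$. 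Then $s(g_k^{(v)})=s(w_0)+s(w_1)\geq 2^{k-1}+2^{k-1}=2^k$, and the vector of lengths $(|g_k^{(b)}|,|g_k^{(c)}|,|g_k^{(d)}|)$ obeys a recursion whose transition at the $j$-th step is $A_{\o_j}$ (up to transpose and a bounded additive error from the separators), so iterating yields $|g_k|\leq C\|A_{\o_0}\cdots A_{\o_k}\|$; the shift of one index and the $O(1)$ errors cost only a multiplicative constant, since the $A_t$ have bounded entries and are invertible.

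The main obstacle is the combinatorial bookkeeping inside this assembly, to be carried out uniformly in $\o_0\in\{0,1,2\}$. One has to check that the word $g_k^{(v)}$ written down is pre-reduced at every level --- so that no cancellation of the form $b_{\s\o}c_{\s\o}=d_{\s\o}$, $b_{\s\o}^2=id$, etc.\ occurs and the minimal tree does not collapse, keeping the activity genuinely equal to $s(w_0)+s(w_1)$ --- and, at the same time, that one is never forced to pad with extra $a$'s beyond the budget recorded by $A_{\o_0}$, since it is precisely this that makes the sharp constant come out (for $\o=(012)^\infty$ the growth rate of $\|A_{\o_0}\cdots A_{\o_k}\|$ is $(8/\eta^3)^{k/3}$, i.e.\ $2/\eta$ per level, which together with one doubling of activity per level matches $2=(2/\eta)^{\a_0}$). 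For $\o=(012)^\infty$ this verification is exactly \cite[Prop.~4.7]{BE}; the only new point is that the construction and the estimate are local in $\o$, so that the length growth is carried by the matrix product $A_{\o_0}\cdots A_{\o_k}$ rather than by any particular periodic pattern.
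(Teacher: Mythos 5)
Your plan follows the same strategy as the paper: reduce to words over $G_\o$ using Proposition \ref{activite}, then build $w_k$ recursively through the embedding $G_\o\hookrightarrow G_{\s\o}\wr S_2$ so that the rewriting $w_k=(w_0,w_1)\s$ produces two copies of a level-$(k-1)$ word, doubling the activity, while the word length is governed by the matrix product $A_{\o_0}\cdots A_{\o_k}$. The paper implements this as an explicit \emph{pull-back substitution} $\z:\{ab_{\s\o},ac_{\s\o},ad_{\s\o}\}^\ast\to\{ab_\o,ac_\o,ad_\o\}^\ast$ (sending $ab\mapsto abab$, $ac\mapsto acac$, $ad\mapsto abadac$ when $\o_0=0$, and cyclically for $\o_0=1,2$), with the crucial identity $\z(au)=(ua,au)$, so the whole construction is a single word $w_k=\z(w_{k-1})$.

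Two points in your sketch deserve a flag, because they would lead you astray if you tried to write it out. First, you propose to carry a \emph{triple of words} $g_k^{(b)},g_k^{(c)},g_k^{(d)}$, ``one for each type,'' and say the matrices govern the vector $(|g_k^{(b)}|,|g_k^{(c)}|,|g_k^{(d)}|)$. That is not what the $A_{\o_j}$ record: there is only one word $w_k$ at each level, and the matrix acts on its vector of \emph{letter counts} $(|w_k|_{b_\o},|w_k|_{c_\o},|w_k|_{d_\o})^T$. Concretely, the column of $A_{\o_0}$ indexed by $v$ records how many $b$'s, $c$'s, $d$'s replace each $v_{\s\o}$ under $\z$; the total length is then bounded by $2$ times the sum of entries of $A_{\o_0}\cdots A_{\o_k}\cdot\bigl(|w_0|_b,|w_0|_c,|w_0|_d\bigr)^T$, which is why the norm of the matrix product controls $|w_k|$. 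Setting up a recursion on three genuinely separate word lengths would not reproduce these matrices and would require a separate argument. Second, the identity $\z(au)=(ua,au)$ holds only when $au$ contains an \emph{even} number of letters of the passive type for $\o_0$ (the $v$ with $u^v(\o_0)=id$); this parity must be propagated through the induction, which the paper arranges by choosing the seed $au_0=v_{\s^k\o}$ so that $\z(au_0)$ has two letters of one type, and then observing that each substitution rule either preserves a letter type's count mod $2$ or introduces the passive letter in pairs/triples consistent with this. Your sketch is aware that pre-reducedness is at stake, but the mechanism that guarantees it is precisely this parity bookkeeping, and it is the one nontrivial verification you would still need to supply.
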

\begin{proof}
Consider the subsemigroup $\O_\o'=\{ab_\o,ac_\o,ad_\o\}^\ast \subset \O_\o$, and define the pull back substitution $\z: \O'_{\s\o} \rightarrow \O'_\o$ by:
$$\begin{array}{llll}
\z(ab_{\s \o})=ab_\o ab_\o & \z(ac_{\s \o})=ac_\o ac_\o & \z(ad_{\s \o})=ab_\o ad_\o ac_\o & \textrm{ if $\o_0=0$,} \\
\z(ab_{\s \o})=ab_\o ab_\o & \z(ac_{\s \o})=ad_\o ac_\o ab_\o & \z(ad_{\s \o})=ad_\o ad_\o & \textrm{ if $\o_0=1$,} \\
\z(ab_{\s \o})=ac_\o ab_\o ad_\o & \z(ac_{\s \o})=ac_\o ac_\o & \z(ad_{\s \o})=ad_\o ad_\o & \textrm{ if $\o_0=2$.} \\
\end{array}$$
Such a pull back substitution is designed so that $\z(au)=(ua,au)$ when $au$ is apre-reduced word containing an even number of $v$'s (where $v=d$ if $\o_0=0$, $v=c$ if $\o_0=1$ and $v=b$ if $\o_0=2$). Indeed, the following relations hold (take $\o_0=0$, similar otherwise):
$$\begin{array}{ll} \z(ab)=abab=(ba,ab), & baba=(ab,ba), \\
\z(ac)=acac=(ca,ac), & caca=(ac,ca), \\
\z(ad)=abadac=(d,ada)a, & badaca=(ada,d)a. \end{array} $$
The pull back of $v_{\s\o}$ furnishes $v_\o$ on both components of the wreath product. The $a$'s behave conveniently under the parity condition. 

Given a word $w_0=au_0$ in $\O'_{\s^k \o}$, define by induction $\z(au_{k-1})=au_k=w_k \in \O'_\o$. The initial word $u_0$ can be chosen among the generators $\{b_{\s^k\o},c_{\s^k\o},d_{\s^k\o}\}$ so that $\z(au_0)=avav$ for another generator $v$ of $G_{\s^{k-1}\o}$, so that $\z(au_k)$ always has an even number of $v$'s, and the inverted orbit of $au_k$ can be studied by induction via:
$$\z(au_{k-1})=au_k=(u_{k-1}a,au_{k-1}) \textrm{ and } u_ka=(au_{k-1},u_{k-1}a). $$
Proposition \ref{activite} now ensures that:
$$s(au_k) \geq s(au_{k-1})+s(u_{k-1}a) \textrm{ and } s(u_ka) \geq s(u_{k-1}a)+s(au_{k-1}),$$
which is integrated in $s(au_k)\geq 2^k$. 

To estimate the length of $w_k=\z(w_{k-1})$, it is sufficient to count the numbers $|w|_{b_\o},|w|_{c_\o},|w|_{d_\o}$ of generators $b_\o,c_\o,d_\o$ appearing in $w$, since the total length is controlled by $|w| \leq 2(|w|_{b_\o}+|w|_{c_\o}+|w|_{d_\o})$. The construction of the pull back substitution $\z$ provides the relations:
$$A_{\o_0}\left(\begin{array}{c} |w_{k-1}|_{b_{\s\o}} \\ |w_{k-1}|_{c_{\s\o}} \\ |w_{k-1}|_{d_{\s\o}} \\ \end{array} \right)=\left(\begin{array}{c} |w_k|_{b_\o} \\ |w_k|_{c_\o} \\ |w_k|_{d_\o} \\ \end{array} \right), $$
for the matrices $A_0,A_1,A_2$, and eventually by induction:
$$A_{\o_0}A_{\o_1} \dots A_{\o_k} \left(\begin{array}{c} |w_0|_{b_{\s^k\o}} \\ |w_0|_{c_{\s^k\o}} \\ |w_0|_{d_{\s^k\o}} \\ \end{array} \right)= \left(\begin{array}{c} |w_k|_{b_\o} \\ |w_k|_{c_\o} \\ |w_k|_{d_\o} \\ \end{array} \right), $$
so that $|w_k| \leq C||A_{\o_0}\dots A_{\o_k}||$. 
\end{proof} 

The matrices $A_0,A_1,A_2$ are cyclic conjugates $A_1=CA_0C^{-1}$ and $A_2=C^{-1}A_0C=CA_1C^{-1}$, so that $A_0^{k_1}A_1^{k_2}A_2^{k_3}\dots=A_0^{k_1}CA_0^{k_2}CA_0^{k_3}\dots$ with
$$C=\left( \begin{array}{ccc} 0 & 1 & 0 \\ 0 & 0 & 1 \\ 1 & 0 & 0 \\ \end{array} \right), A_0C=\left( \begin{array}{ccc} 1 & 2 & 0 \\ 1 & 0 & 2 \\ 1 & 0 & 0 \\ \end{array} \right).$$
The matrix $A_0C$ has caracteristic polynomial $X^3-X^2-2X-4$ with positive real root $\frac{2}{\eta}$, and two complex conjugate roots of smaller absolute value, hence spectral radius $\rho(A_0C)=\frac{2}{\eta}$. (Remind that $\eta$ is the positive root of $X^3+X^2+X-2$.)

\begin{examples}\label{exemplesperiodiques}
\begin{enumerate}
\item For $\o=(012)^\infty$, the spectral radius theorem gives: 
$$||A_{\o_0}\dots A_{\o_k}|| \leq ||(A_0C)^{k+1}|| \leq C \r(A_0C)^k=C \left(\frac{2}{\eta} \right)^k. $$
\item For other periodic sequences, similar bounds are obtained, as for instance $\o=(001122)^\infty$, then:
$$ ||A_{\o_0}\dots A_{\o_k}|| \leq ||(A_0^2C)^{\frac{k+1}{2}}|| \leq C \r(A_0^2C)^{\frac{k}{2}}, $$
where the spectral radius $\r(A_0^2) \approx 5.63$ is the positive root of $X^3-3X^2-12X-16$.
\end{enumerate}
\end{examples}

Such estimates for periodic sequences are not usually sharp enough. The following lemma is the key step to obtain the lower bound on the growth function in theorem \ref{precisegrowth}:
\begin{lemma}\label{spectralradii}
Let $\o=0^{m_1}(012)^{n_1}0^{m_2}(012)^{n_2}\dots$, with $m_i,n_i \rightarrow \infty$. There exists a constant $C$, such that for every $\e>0$ and $k=\sum_{i=1}^j m_i+3n_i$ big enough:
$$||A_{\o_0}\dots A_{\o_k}|| \leq C^{\e k}\r(A_0)^{\sum m_i} \r(A_0C)^{3\sum n_i}=C^{\e k} 2^{\sum m_i} \left( \frac{2}{\eta} \right)^{3\sum n_i}. $$
\end{lemma}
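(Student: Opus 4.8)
The plan is to exploit the multiplicative structure of the product $A_{\o_0}\cdots A_{\o_k}$: because $\o$ is a concatenation of long blocks $0^{m_i}$ and $(012)^{n_i}$, the product splits (up to the matrices needed to align $(012)$-blocks into powers of $A_0C$) as
$$ A_{\o_0}\cdots A_{\o_k} = A_0^{m_1}\,(A_0C)^{n_1}B_1\,A_0^{m_2}\,(A_0C)^{n_2}B_2\cdots, $$
where each $B_i$ is a bounded correction term (a product of at most two of the matrices $C,C^{-1}$, coming from the fact that $A_1=CA_0C^{-1}$ etc., plus possibly a shift of $\o$ by one or two letters at the junctions between blocks). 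The key quantitative input is the spectral radius theorem in its non-asymptotic form: for a fixed matrix $M$ with spectral radius $\r(M)$ and any $\delta>0$ there is a constant $C_\delta$ with $\|M^n\|\le C_\delta(\r(M)+\delta)^n$ for all $n$; equivalently, using a norm $\|\cdot\|_\delta$ adapted to $M$ (obtained by rescaling so that $M/(\r(M)+\delta)$ is a contraction in the corresponding Jordan basis), $\|M^n\|_\delta\le(\r(M)+\delta)^n$. Since $A_0$ is upper triangular with eigenvalues $2,2,1$, one has $\|A_0^m\|\le C\,m\,2^m\le C^{\e m}2^m$ for $m$ large; and $\r(A_0C)=2/\eta$, so $\|(A_0C)^n\|\le C^{\e n}(2/\eta)^n$, again for $n$ large. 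For the last partial block $(012)^{n_j}$ one may even need to pull it apart into three separate powers of $A_0,A_1,A_2$ — this is the source of the exponent $3\sum n_i$ rather than $\sum n_i$ in the statement, and it costs only a bounded multiplicative factor per block.

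First I would record the block decomposition precisely, tracking how a $(012)^{n}$ subword starting at position $\o_\ell$ contributes: depending on $\ell \bmod 3$ it equals $A_0(A_0C)^{n-1}$ or $CA_0(A_0C)^{n-1}$ or $C^{-1}A_0(A_0C)^{n-1}$ times a leftover $A_{\o}$ or two, absorbed into the bounded $B_i$. Then I would fix $\e>0$, choose $\delta$ and the adapted norm so that $\|A_0^m\|\le K\cdot m\cdot 2^m$ and $\|(A_0C)^n\|\le K(2/\eta)^n$ hold with a single constant $K$ uniform over all blocks. Multiplying submultiplicatively across all $j$ blocks,
$$ \|A_{\o_0}\cdots A_{\o_k}\| \le \prod_{i=1}^j \|A_0^{m_i}\|\cdot\|(A_0C)^{3n_i}\|\cdot\|B_i\| \le (K\|B\|_{\max})^{j}\Bigl(\textstyle\prod_i m_i\Bigr)\,2^{\sum m_i}\Bigl(\tfrac{2}{\eta}\Bigr)^{3\sum n_i}. $$
It then remains to absorb the polynomial/geometric overhead $(K\|B\|_{\max})^j\prod_i m_i$ into $C^{\e k}$. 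Here I use $k=\sum(m_i+3n_i)\ge\sum m_i$ and, crucially, that $m_i,n_i\to\infty$: the number of blocks $j$ satisfies $j=o(k)$ (indeed $j\cdot\min_i(m_i+3n_i)\le k$ with $\min_i(m_i+3n_i)\to\infty$), and $\prod_i m_i\le (\max_i m_i)^j\le 2^{\sum m_i}\le 2^k$ crudely — or more carefully $\log\prod_i m_i=\sum\log m_i\le j\log(\max m_i)=o(k\log k)$, which is $\le \e k\log C$ for $k$ large once $C$ is fixed large enough. So for each fixed $\e$ and all sufficiently large $k$ the whole correction factor is bounded by $C^{\e k}$.

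The main obstacle is purely bookkeeping: making sure the "bounded correction" matrices $B_i$ at the junctions between a $0$-block and a $(012)$-block (and the phase of the $(012)$-block relative to the pattern $A_0C$) really are uniformly bounded in number and in norm — i.e., that no accumulation of $C,C^{-1}$ factors occurs — and simultaneously that the constant $C$ in $C^{\e k}$ can be taken independent of $\e$ (only the threshold "$k$ big enough" depends on $\e$). Both follow once one observes that $C^3=\mathrm{id}$, so at each junction only one of finitely many correction patterns can occur, and that the number of junctions is $j$ with $j/k\to 0$; the spectral-radius estimates for $A_0$ and $A_0C$ individually are standard linear algebra (triangular form for $A_0$, and for $A_0C$ the characteristic polynomial $X^3-X^2-2X-4$ having $2/\eta$ as its unique root of maximal modulus, the other two being strictly smaller in absolute value, as already noted in the text). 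No genuinely new idea beyond the non-asymptotic spectral radius theorem is needed.
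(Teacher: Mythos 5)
Your proposal is correct and follows essentially the same route as the paper: split the product into the $0^{m_i}$- and $(012)^{n_i}$-blocks, bound each block via the spectral radius, use submultiplicativity, and absorb the per-block overhead into $C^{\e k}$ via $j=o(k)$. The one thing you missed — and it removes most of your bookkeeping — is that $A_0A_1A_2=A_0(CA_0C^{-1})(C^{-1}A_0C)=A_0CA_0C^{-2}A_0C=(A_0C)^3$ since $C^3=\mathrm{id}$, so each $(012)^{n_i}$-block equals $(A_0C)^{3n_i}$ exactly and there are no junction correction matrices $B_i$ whatsoever; moreover $A_0$ is diagonalizable (the eigenvalue $2$ has a two-dimensional eigenspace), so $\|A_0^m\|\leq C\cdot 2^m$ holds without your extra factor of $m$, though your weaker bound would also have been absorbable.
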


Note that this inequality is essentially optimal.

\begin{proof}
By the spectral radius theorem, there exists $C$ such that $||A_0^m|| \leq C \r(A_0)^m$ and $||(A_0C)^{3n}|| \leq C \r(A_0C)^{3n}$, so:
\begin{eqnarray*}
||A_{\o_0}\dots A_{\o_k}|| &\leq& ||A_0^{m_1}(A_0C)^{3n_1}A_0^{m_2}(A_0C)^{3n_2}\dots|| \\
&\leq& ||A_0^{m_1}||.||(A_0C)^{3n_1}||.||A_0^{m_2}||.||(A_0C)^{3n_2}||\dots \\
&\leq& C^j \r(A_0)^{\sum_{i=1}^jm_i}\r(A_0C)^{3\sum_{i=1}^jn_i}, 
\end{eqnarray*}
where $j=o(k)$ since $m_i,n_i \rightarrow \infty$.
\end{proof}
Note that if $m_i,n_i$ are of the order $\log i$, then $j \approx \frac{k}{\log k}$, and if $m_i,n_i$ are of the order $i^\theta$, then $j \approx k^{\frac{1}{\theta+1}}$.

\subsection{Activity of all words and upper bound on growth}\label{allactivity} Say a sequence $\o=\o_0\o_1\o_2\dots$ in $\{0,1,2\}^\N$ is {\it rotating} if $\o_{i+1} \in \{\o_i, \o_i+1\} \textrm{ mod $3$}$ for all $i$. Remind that $\eta$ is the positive root of $X^3+X^2+X-2$. Adapting \cite{Bar1} to rotating sequences $\o$, define a length on $G_\o$ by assigning weights to the generating set $\langle a, b_\o,c_\o,d_\o \rangle$. Set $||a||=1-\eta^3$ and:
$$\begin{array}{llll}
 \textrm{ if $\o_0=0$,} & ||b_\o||=\eta^3, & ||c_\o||=1-\eta^2, & ||d_\o||=1-\eta,  \\
 \textrm{ if $\o_0=1$,} & ||b_\o||=1-\eta^2, & ||c_\o||=1-\eta, & ||d_\o||=\eta^3, \\
 \textrm{ if $\o_0=2$,} & ||b_\o||=1-\eta, & ||c_\o||=\eta^3, & ||d_\o||=1-\eta^2. \\
\end{array}$$
This defines a length on $G_\o$ for which the minimal representative words are pre-reduced ($\eta$ is chosen so that this is the case, see lemma 4.1 in \cite{Bar1}), which is obviously equivalent to the usual word length $\frac{1}{C}|w| \leq ||w|| \leq C|w|$, and designed so that if $\o_1=\o_0+1$, then:
\begin{eqnarray*}
\e_b(\o_0)||a||+||b_{\s \o}|| &=& \eta(||a||+||b_\o||), \\
\e_c(\o_0)||a||+||c_{\s \o}|| &=& \eta(||a||+||c_\o||), \\
\e_d(\o_0)||a||+||d_{\s \o}|| &=& \eta(||a||+||d_\o||), \\
\end{eqnarray*}
where: $\e_b\left( \begin{array}{c} 0 \\ 1 \\ 2 \\ \end{array}\right)=\left( \begin{array}{c} 1 \\ 1 \\ 0 \\ \end{array}\right),\e_c\left( \begin{array}{c} 0 \\ 1 \\ 2 \\ \end{array}\right)=\left( \begin{array}{c} 1 \\ 0 \\ 1 \\ \end{array}\right),\e_d\left( \begin{array}{c} 0 \\ 1 \\ 2 \\ \end{array}\right)=\left( \begin{array}{c} 0 \\ 1 \\ 1 \\ \end{array}\right), $
and if $\o_0=\o_1$, then the factor $\eta$ on the right-hand sides disappears. 

This length on $G_\o$ can be extended to a function on the set of words in the generating set $\langle S_\o \rangle$ of $\G_\o$ by $||\f_fv||=||v||$ if $v \in \{b_\o,c_\o,d_\o\}$ and $||\f_f id||=0$. Note that even though $||w||$ is not a length on $\G_\o$ it is still equivalent to the length of pre-reduced words, i.e. $\frac{1}{C}|w| \leq ||w|| \leq C |w|$, because if $w=a^{i_1}\f_{f_1}v_1a\f_{f_2}v_2\dots a\f_{f_r}v_ra^{i_2}$, then $||w||=||\underline{w}||$ for $\underline{w}=a^{i_1}v_1av_2\dots av_ra^{i_2}$, which is bilipschitz equivalent to $r$.

The following statement generalizes Lemma 4.2 in \cite{BE}. 
\begin{lemma}\label{contraction}
Let $w$ be a pre-reduced word of $\G_\o,S_\o$ with rewritting process giving $w=(w_0,w_1)\e^s$, then: 
$$||w_0||+||w_1|| \leq \eta^{q(\o_0,\o_1)} ||w||+C,$$
where $C=\eta ||a||$, $q(\o_0,\o_1)=0$ if $\o_1=\o_0$, $q(\o_0,\o_1)=1$ if $\o_1=\o_0+1$ and the left-handside lengths are in $\G_{\s\o}$, the right-hand side one in~$\G_\o$.
\end{lemma}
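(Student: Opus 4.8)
The plan is to prove Lemma \ref{contraction} by reducing the estimate on $\|w_0\|+\|w_1\|$ to the per-generator weight relations displayed just above the statement, using the rewriting process described in Section 2. First I would recall the structure of the rewriting process: a pre-reduced word $w=a^{i_1}\f_{f_1}v_1a\cdots a\f_{f_r}v_r a^{i_2}$ is rewritten as $(w_0,w_1)\e^s$, where each block $a^{\e}\f_f v_\o a^{\d}$ contributes, via $\f_f v_\o=(u^v(\o_0),\f_f v_{\s\o})$ and $v_\o^a=(v_{\s\o},u^v(\o_0))$, a factor of the form $u^v(\o_0)$ on one side and $\f_f v_{\s\o}$ on the other, possibly absorbing adjacent $a$'s. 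Since $\|\f_f v\|=\|v\|$ and $\|\f_f\,id\|=0$, it suffices to work with the underlying word $\underline{w}=a^{i_1}v_1 a\cdots a v_r a^{i_2}$ in $G_\o$, so I may assume $F$ is trivial throughout.

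The core computation is a term-by-term bookkeeping. For each occurrence of a generator $v_\o\in\{b_\o,c_\o,d_\o\}$ in $\underline{w}$, together with (roughly) half of each of its two neighbouring $a$'s, the rewriting sends this ``syllable'' of weight $\|a\|+\|v_\o\|$ (with the $a$-weight suitably apportioned) to a contribution of $\e_v(\o_0)\|a\|+\|v_{\s\o}\|$ landing in a single component $w_0$ or $w_1$. The displayed identities say precisely that this equals $\eta(\|a\|+\|v_\o\|)$ when $\o_1=\o_0+1$ and equals $\|a\|+\|v_\o\|$ when $\o_1=\o_0$. Summing over the $r$ syllables gives $\|w_0\|+\|w_1\|\le \eta^{q(\o_0,\o_1)}\big(\sum_i(\|a\|+\|v_i\|)\big)$, and since $\sum_i(\|a\|+\|v_i\|)$ differs from $\|\underline w\|=\sum\|v_i\|+(\text{number of }a\text{'s})\|a\|$ by at most the weight of the two extremal $a^{i_1},a^{i_2}$, i.e. by at most $2\|a\|\le C/\eta$ worth of slack, one picks up the additive constant $C=\eta\|a\|$. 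I would present this as: $\|w_0\|+\|w_1\|\le \eta^{q}\|\underline w\|+\eta^{q}\cdot\big(\text{boundary }a\text{-terms}\big)\le \eta^{q}\|w\|+C$.

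The bookkeeping has two points requiring care, and these are the main obstacle. First, one must be honest about how the $a$'s are distributed between the two components and between consecutive syllables: in the rewriting $w=(w_0,w_1)\e^s$, the $a$'s do not split evenly, and an $a$ sitting between two generator-letters may end up producing a $u^v(\o_0)\in\{id,a\}$ on one side only. The clean way around this is to group the letters of $\underline w$ into the blocks $k_i^{\pm}=a^{\e}v_\o a^{\d}$ that actually appear as the atoms of the rewriting process (as in the computations $\z(ab)=(ba,ab)$ etc.\ of Proposition \ref{somecontraction}, though here without the doubling), verify the weight identity on each such atom directly from the three-line display and the definition of $\e_b,\e_c,\e_d$ by checking the three cases $\o_0=0,1,2$, and only then sum. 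Second, one must note that an inactive letter or a letter that maps to $id$ on one side contributes weight $\le \eta^{q}$ times its original weight as well (the inequality, not equality, comes in here and when $\o_1=\o_0$ the factor is $1$), so no term is undercounted. Because each case is a finite check against the explicitly chosen weights — and $\eta$ was defined precisely to make these identities hold (lemma 4.1 of \cite{Bar1}) — the verification is routine once the blocks are set up correctly; the only real risk is an off-by-one in the boundary $a$'s, which is absorbed into the stated constant $C=\eta\|a\|$.
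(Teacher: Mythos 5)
Your overall approach --- reduce to the underlying word $\underline{w}$ in $G_\o$ (since $||\f_f v||=||v||$), then sum the per-letter weight identities built into the definition of $||\cdot||$ --- is exactly what the paper's very terse proof gestures at, and it is the right idea. The paper essentially asserts the inequality is ``obvious by construction of the length'' and moves on; your write-up supplies the missing bookkeeping, which is genuinely more informative.

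There is however a concrete arithmetic slip in the final step. The boundary slack is
\[
\sum_{i=1}^r\bigl(||a||+||v_i||\bigr)-||\underline w||=(1-i_1-i_2)\,||a||\ \le\ ||a||,
\]
not $2||a||$, and in any case the written claim ``$2||a||\le C/\eta$'' unwinds to $2||a||\le ||a||$, i.e.\ $2\le 1$, which is false. Multiplying the slack bound $||a||$ through by $\eta^q$ yields an additive constant $\eta^q||a||$; this equals $C=\eta||a||$ only when $q=1$, while for $q=0$ it equals $||a||>C$. This is not merely an artefact of loose estimating: take $\o_0=\o_1=0$ and $w=b_\o\, a\, c_\o$. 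The rewriting gives $w_0=a\,c_{\s\o}$ and $w_1=b_{\s\o}\,a$, and a direct computation using $\eta^3+\eta^2+\eta=2$ gives $||w_0||+||w_1||=1+\eta=||w||+||a||$, which strictly exceeds $||w||+\eta||a||$. So the correct additive constant is $||a||$ rather than the lemma's $\eta||a||$. This is harmless downstream --- Fact~\ref{fait} and Lemma~\ref{growthlemma} only use that $C$ is some fixed constant --- but your proof should record the slack as $(1-i_1-i_2)||a||\le ||a||$ and the resulting constant as $\eta^q||a||\le ||a||$, rather than attempting to squeeze it under $\eta||a||$.
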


\begin{proof}
The inequalities for $\underline{w}=(\underline{w}_0,\underline{w}_1)\s(w)$ in $G_\o$ and $G_{\s\o}$ are obvious by construction of the length $||.||$, i.e. by choice of $\eta$. They still apply to pre-reduced words in $\G_\o$ and $\G_{\s\o}$.
\end{proof}

In order to estimate the growth function from above, the word activity function 
$$s_\o(r)=\max\{s(w)|w \in (\G_\o,S_\o), |w|\leq r\}$$ 
will be usefull. However, it is smoother to estimate first the bilipschitz equivalent auxiliary
$$\D_\o(r)=\max\{s(w)|w\textrm{ is pre-reduced}, ||w||\leq r\}.$$

\begin{fact}\label{fait}
For any $r$, there exists $l_0,l_1$ integers such that:
\begin{enumerate}
\item $\D_\o(r) \leq \D_{\s\o}(l_0)+\D_{\s\o}(l_1)$, and
\item $l_0+l_1 \leq \eta^{q(\o_0,\o_1)}r+C$.
\end{enumerate}
By induction, there exists $l_1,\dots,l_{2^p}$ integers such that:
\begin{enumerate}
\item $\D_\o(r) \leq \D_{\s^p\o}(l_1)+\dots+\D_{\s^p\o}(l_{2^p})$, and
\item $l_1+\dots +l_{2^p} \leq \eta^{q(\o_0,\dots,\o_p)}r+2^{p+1}C$, where $q(\o_0,\dots,\o_p)$ is the number of $i$ such that $\o_{i+1}=\o_i+1 \mod 2$.
\end{enumerate}
\end{fact}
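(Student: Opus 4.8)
The plan is to prove the base case directly from Lemma~\ref{contraction} and Proposition~\ref{activite}, then iterate. First I would fix $r$ and take a pre-reduced word $w$ with $\|w\|\le r$ achieving $\D_\o(r)=s(w)$. The rewriting process gives $w=(w_0,w_1)\e^s$ with $w_0,w_1$ pre-reduced words in $\G_{\s\o},S_{\s\o}$; set $l_0=\lceil\|w_0\|\rceil$ and $l_1=\lceil\|w_1\|\rceil$ (using that $\|\cdot\|$ takes values making these meaningful integers, or rounding up to the nearest integer). By Proposition~\ref{activite}, $s(w)=s(w_0)+s(w_1)$, and since $\|w_t\|\le l_t$ we get $s(w_t)\le\D_{\s\o}(l_t)$ by definition of $\D_{\s\o}$; this gives (1). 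For (2), Lemma~\ref{contraction} yields $\|w_0\|+\|w_1\|\le\eta^{q(\o_0,\o_1)}\|w\|+C\le\eta^{q(\o_0,\o_1)}r+C$, and rounding each term up costs at most $2$, so $l_0+l_1\le\eta^{q(\o_0,\o_1)}r+C+2$; absorbing the additive constants (replacing $C$ by $C+2$, harmlessly, or noting $2^{p+1}C$ already has slack for $p=0$) gives the stated bound.

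For the inductive step, suppose the claim holds at level $p$: there are $l_1,\dots,l_{2^p}$ with $\D_\o(r)\le\sum_i\D_{\s^p\o}(l_i)$ and $\sum_i l_i\le\eta^{q(\o_0,\dots,\o_p)}r+2^{p+1}C$. Apply the base case to each $\D_{\s^p\o}(l_i)$ with the shifted sequence $\s^p\o$ (whose first two letters are $\o_p,\o_{p+1}$): each $l_i$ splits into $l_i',l_i''$ with $\D_{\s^p\o}(l_i)\le\D_{\s^{p+1}\o}(l_i')+\D_{\s^{p+1}\o}(l_i'')$ and $l_i'+l_i''\le\eta^{q(\o_p,\o_{p+1})}l_i+C$. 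Re-indexing the $2^{p+1}$ numbers $\{l_i',l_i''\}$ as $l_1,\dots,l_{2^{p+1}}$ gives (1) at level $p+1$. Summing the length bounds and using $q(\o_p,\o_{p+1})\ge 0$:
$$\sum_{j=1}^{2^{p+1}}l_j\le\eta^{q(\o_p,\o_{p+1})}\sum_i l_i+2^pC\le\eta^{q(\o_p,\o_{p+1})}\bigl(\eta^{q(\o_0,\dots,\o_p)}r+2^{p+1}C\bigr)+2^pC\le\eta^{q(\o_0,\dots,\o_{p+1})}r+2^{p+2}C,$$
where the last step uses $q(\o_0,\dots,\o_{p+1})=q(\o_0,\dots,\o_p)+q(\o_p,\o_{p+1})$, $\eta<1$ so $\eta^{q(\o_p,\o_{p+1})}\le 1$, and $2^{p+1}+2^p\le 2^{p+2}$. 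This closes the induction.

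The only mildly delicate point is bookkeeping of the additive constants: one must check that rounding lengths up to integers and accumulating the per-level error $C$ over $p$ levels stays within $2^{p+1}C$ (equivalently $2^{p+2}C$ after the step), which works precisely because the number of leaves doubles while the error per leaf is bounded, so a geometric-type bound $\sum_{j\le p}2^j C$ suffices; the stated $2^{p+1}C$ has exactly the right order. I expect no real obstacle here — the content is entirely in Lemma~\ref{contraction} (the contraction estimate, i.e. the choice of $\eta$) and in the additivity $s(w)=s(w_0)+s(w_1)$ from Proposition~\ref{activite}; Fact~\ref{fait} is just their clean iteration, packaged in the exact form needed to feed into the Growth Lemma~\ref{growthlemma}.
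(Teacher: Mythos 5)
Your proof is correct and follows essentially the same route as the paper: take a pre-reduced word $w$ achieving the maximum $\D_\o(r)=s(w)$, split it via the rewriting process, and apply the additivity of $s$ from Proposition~\ref{activite} together with the contraction estimate of Lemma~\ref{contraction}, then iterate. The paper's own proof only spells out the one-level base case and leaves the induction implicit; your explicit bookkeeping of the rounding to integers and of the accumulated additive constant $\sum_{j\le p}2^jC \le 2^{p+1}C$ fills in precisely what the phrase ``by induction'' glosses over.
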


\begin{proof}
The maximum is realized for a certain word $w$, for which the rewritting process furnishes $w=(w_0,w_1)\s(w)$ with $l_0=||w_0||$ and $l_1=||w_1||$ such that $l_0+l_1 \leq \eta^{q(\o_0,\o_1)}||w||+C$ by lemma \ref{contraction}. Thus:
$$\D_\o(r) =s(w)=s(w_0)+s(w_1) \leq \D_{\s\o}(l_0)+\D_{\s\o}(l_1). $$
\end{proof}

\begin{proposition}\label{corupperbound}
Suppose $\o$ is such that for all $i$, there exists $p(i) \leq P$ such that $q(\o_i,\dots,\o_{i+p(i)})=q(i)$ and $\frac{q(i)}{p(i)} \geq \l$, then:
$$\log b_{\G_\o}(r) \leq A^Pr^\a, \textrm{ for } \a=\frac{\log(2)}{\log(2)-\l\log(\eta)}. $$
\end{proposition}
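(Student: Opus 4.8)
The plan is to combine Fact~\ref{fait}, the Growth Lemma (Lemma~\ref{growthlemma}), and Corollary~\ref{actgro}. First I would translate the hypothesis on $\o$ into the hypotheses of Lemma~\ref{growthlemma} for the auxiliary activity function $\D_\o$. Fact~\ref{fait} already gives, for every $r$ and every chosen depth $p$, integers $l_1,\dots,l_{2^p}$ with $\D_\o(r)\leq \sum_i \D_{\s^p\o}(l_i)$ and $\sum_i l_i\leq \eta^{q(\o_0,\dots,\o_p)}r+2^{p+1}C$, where $q(\o_0,\dots,\o_p)$ counts the indices $i<p$ with $\o_{i+1}=\o_i+1$. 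The subtlety is that Lemma~\ref{growthlemma} is a statement about a single function $\D:\N\to\N$ satisfying a self-referential recursion, whereas Fact~\ref{fait} relates $\D_\o$ to $\D_{\s^p\o}$, a different function. So the first real step is to set up a single function to which Lemma~\ref{growthlemma} applies: I would let $\D(r)=\max_i \D_{\s^i\o}(r)$ (the sup over all shifts), or equivalently work shift by shift and note that the hypothesis ``for all $i$ there is $p(i)\leq P$ with $q(\o_i,\dots,\o_{i+p(i)})=q(i)$ and $q(i)/p(i)\geq\l$'' is shift-invariant: it holds for $\o$ iff it holds for every $\s^i\o$. Applying Fact~\ref{fait} at the sequence $\s^i\o$ with depth $p=p(i)$ gives exactly conditions (1)–(3) of Lemma~\ref{growthlemma} for $\D$, with the same constant $C$ (up to the harmless factor $2^{p+1}$ versus $2^pC$, absorbed by enlarging $C$) and with $p(r)\leq P$ bounded.

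Next I would invoke Lemma~\ref{growthlemma}: since the word activity is trivially bounded by a multiple of word length, $\D_\o(r)\leq Kr$ for a uniform $K$ (each generator contributes boundedly to $s$), so the lemma yields $\D_\o(r)\leq L r^\a$ with $L=A^P$ for a constant $A$ depending only on $\#F$ (through $C$) and $K$, and with $\a=\frac{\log 2}{\log 2-\l\log\eta}$ as in the statement. Because $\D_\o$ is bilipschitz equivalent to the honest word activity $s_\o(r)=\max\{s(w): |w|\leq r\}$ — this equivalence was noted right before Fact~\ref{fait}, since $\|w\|$ and $|w|$ differ by a bounded factor — we get $s_\o(r)\leq A'^P r^\a$ after adjusting the constant. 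Finally, Corollary~\ref{actgro}(2) says $b_{\G_\o}(r)\leq C_0^{\max\{s(w):|w|\leq r\}}=C_0^{s_\o(r)}$, so $\log b_{\G_\o}(r)\leq (\log C_0)\,s_\o(r)\leq A^P r^\a$ after a final renaming of the constant $A$. This is the claimed bound.

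The main obstacle I expect is the bookkeeping needed to legitimately feed Fact~\ref{fait} into Lemma~\ref{growthlemma}: one must check that the depth $p(i)$ can be chosen as a genuine function of $r$ (not of the index $i$ only) in a way compatible with the recursion, i.e. that when we unfold $\D_\o(r)$ we are always allowed to pick, at each node, a block length $p$ realizing $q/p\geq\l$ with $p\leq P$, and that the accumulated error terms stay of the form $2^pC$ as required by hypothesis (1) of the lemma. This is precisely where the shift-invariance of the hypothesis and the uniform bound $P$ on block lengths are used; once that is in place, the rest is a direct citation. A secondary point to handle cleanly is the passage between $\D_\o$ (defined via $\|\cdot\|$) and $s_\o$ (defined via $|\cdot|$) and then to $b_{\G_\o}$, but each of these steps only costs a constant factor inside the exponent, which is harmless for the exponent $\a$ and merely inflates the constant $A$.
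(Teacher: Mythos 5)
Your proposal is correct and follows the same route as the paper: the paper also sets $\D(r)=\sup\{\D_{\s^p\o}(r)\mid p\in\N\}$, feeds Fact~\ref{fait} into Lemma~\ref{growthlemma} with the trivial bound $\D(r)\leq Kr$, and concludes via Corollary~\ref{actgro}. Your remarks about absorbing the factor $2^{p+1}$ versus $2^p$ into the constant $C$, and about the bilipschitz passage from $\D_\o$ to $s_\o$, are exactly the points the paper implicitly sweeps under the rug, so nothing is missing.
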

In particular, if $\o$ is $p$-periodic and $q(\o_0,\dots,\o_p)=q$, then $\log b_{\G_\o}(r) \leq Lr^\a$ for $\a=\frac{\log(2)}{\log(2)-\frac{q}{p}\log(\eta)}.$

\begin{proof}
Set $\D(r)=\sup \{\D_{\s^p\o}(r)|p \in \N\}$. Fact \ref{fait} provides the existence of $l_1+\dots+l_{2^p} \leq \eta^{q(\o_0,\dots,\o_p)}r+2^pC$ such that $\D(r) \leq \D(l_1)+\dots+\D(l_{2^p})$, and so by lemma \ref{growthlemma} there is a constant $A$ such that $\D(r) \leq A^P r^\a$, so that $s_\o(r) \leq A^Pr^\a$ (mind that there is a trivial bound $\D(r) \leq Kr$ because the activity is bounded by the word length). Now corollary \ref{actgro} shows $\log b_{\G_\o}(r) \leq A^Pr^\a$.
\end{proof}

\section{Precise growth estimates}
The particular case of theorem \ref{mainthm} can now be derived. Recall that $\a_0$ is such that $2=\left( \frac{2}{\eta} \right)^{\a_0}$.

\begin{theorem}\label{precisegrowth}
For any $\a \in [\a_0,1]$, there exists a sequence $\o(\a)$ such that $\a(\G_{\o(\a)})=\a$, i.e. 
$$\lim \frac{\log \log b_{\G_{\o(\a)}}(r)}{\log r} = \a. $$
\end{theorem}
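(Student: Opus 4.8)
The plan is to build the sequence $\o(\a)$ as an interlacing of long blocks of $0$'s and long blocks of $(012)$'s, namely $\o(\a) = 0^{m_1}(012)^{n_1}0^{m_2}(012)^{n_2}\dots$, tuned so that the asymptotic proportion of ``turning'' steps (those with $\o_{i+1}=\o_i+1 \bmod 3$) converges to a prescribed value $\l=\l(\a)$, where $\a=\frac{\log 2}{\log 2 - \l\log\eta}$. Since $\a_0$ corresponds to $\l=1$ (every step turning, i.e.\ $\o=(012)^\infty$) and $\a=1$ corresponds to $\l=0$ (no turning, i.e.\ $\o=0^\infty$-like behavior asymptotically), solving $\a=\frac{\log 2}{\log 2-\l\log\eta}$ for $\l\in[0,1]$ gives every intermediate exponent. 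Concretely, among $\o_0\dots\o_k$ the number of turning steps $q(\o_0,\dots,\o_k)$ equals $3\sum n_i$ up to $O(j)$ error (one extra turn at each block boundary), while the total length is $\sum m_i + 3\sum n_i$; choosing $m_i,n_i\to\infty$ slowly (say of order $\log i$) with $\frac{3\sum_{i\le j} n_i}{\sum_{i\le j}(m_i+3n_i)} \to \l$ makes $\frac{q(\o_0,\dots,\o_k)}{k}\to\l$.

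For the \emph{upper bound} $\limsup \frac{\log\log b}{\log r}\le\a$, I would apply Proposition~\ref{corupperbound}: since $m_i,n_i\to\infty$, for every $\l'<\l$ there is $P=P(\l')$ and, at every position $i$, a window of length $p(i)\le P$ with $\frac{q(\o_i,\dots,\o_{i+p(i)})}{p(i)}\ge\l'$ (inside a long $(012)$-block the density of turns is essentially $1$, and the slowly-growing $0$-blocks are eventually short relative to $P$—here one must be a little careful and instead let $P$ grow, or rather argue with the refined constant $L=A^P$ tracking). This yields $\log b_{\G_{\o(\a)}}(r)\le A^{P}r^{\a'}$ for $\a'=\frac{\log 2}{\log 2-\l'\log\eta}$, and letting $\l'\uparrow\l$ gives $\overline{\a}(\G_{\o(\a)})\le\a$. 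For the \emph{lower bound} $\liminf\frac{\log\log b}{\log r}\ge\a$, I would use Proposition~\ref{somecontraction} together with Corollary~\ref{actgro}(1): there are words $w_k$ with $s(w_k)\ge 2^k$ and $|w_k|\le C\|A_{\o_0}\dots A_{\o_k}\|$, and Lemma~\ref{spectralradii} bounds $\|A_{\o_0}\dots A_{\o_k}\|\le C^{\e k}2^{\sum m_i}(2/\eta)^{3\sum n_i}$ for every $\e>0$. Writing $r_k=|w_k|$, one gets $\log\log b_{\G_{\o(\a)}}(r_k) \ge k\log 2 - O(1)$ while $\log r_k \le \e k\log C + (\sum m_i)\log 2 + 3(\sum n_i)\log(2/\eta) + O(1)$; since $\sum m_i\log 2 + 3\sum n_i\log(2/\eta) = \log 2\cdot\sum m_i + 3\log 2\cdot\sum n_i - 3\log\eta\cdot\sum n_i$ and the turn-density condition forces $\frac{3\sum n_i}{\sum m_i+3\sum n_i}\to\l$, a short computation gives $\frac{k\log 2}{\log r_k}\to\frac{\log 2}{\log 2 - \l\log\eta}=\a$ as $\e\to 0$. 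Because the ball function $b$ is monotone, the subsequence $r_k$ suffices to control the $\liminf$ provided $r_{k+1}/r_k$ is bounded (true here since each $\|A_{\o_j}\|$ is bounded, so $r_{k+1}\le C'r_k$), interpolating between consecutive $r_k$.

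The main obstacle I anticipate is the matching of constants at the two ends. The upper bound from Proposition~\ref{corupperbound} genuinely needs a \emph{uniform} window size $P$, whereas the $0$-blocks $0^{m_i}$ grow without bound, so one must either (a) accept a slightly worse exponent $\a'<\a$ from windows that dodge the long $0$-blocks and take $\l'\uparrow\l$—which only controls $\overline\a$, adequate since we want $\overline\a\le\a$—or (b) re-run the Growth Lemma~\ref{growthlemma} with $P=P(r)\to\infty$ growing slowly enough (logarithmically, matching the $m_i$) that $A^{P(r)}r^\a = r^{\a+o(1)}$, which then gives $\log b(r)\le r^{\a+o(1)}$, exactly $\overline\a\le\a$. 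I would go with (b) as it is cleaner. The lower-bound side is more robust: the only subtlety is that the inverted-orbit words $w_k$ of Proposition~\ref{somecontraction} live in the fixed semigroup $\{ab_\o,ac_\o,ad_\o\}^*$ and the estimate $|w_k|\le C\|A_{\o_0}\dots A_{\o_k}\|$ is exactly what Lemma~\ref{spectralradii} is designed to feed, so the density condition on $\o(\a)$ transfers directly to the exponent. Finally I would record that by construction $\overline\a=\underline\a=\a$, hence the stated limit, and remark that when $\l=1$ (so $\a=\a_0$) this recovers the Bartholdi--Erschler group $\Z/2\Z\wr_X G_{(012)^\infty}$.
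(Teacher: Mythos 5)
Your proposal is correct and follows essentially the same route as the paper: the same interlaced sequence $0^{m_1}(012)^{n_1}0^{m_2}(012)^{n_2}\dots$ with $m_i,n_i\to\infty$ slowly, Lemma~\ref{localization} plus Proposition~\ref{corupperbound} with a scale-dependent $P(r)$ (your option (b)) for the upper bound, and Proposition~\ref{somecontraction}, Lemma~\ref{spectralradii} and Corollary~\ref{actgro}(1) with interpolation for the lower bound. One should just be careful with the phrase ``$P(r)$ growing logarithmically'': what is actually needed is $P(r)=o(\log r)$ so that $A^{P(r)}=r^{o(1)}$, and with $m_i,n_i\sim\log i$ one gets $P(r)\sim\log\log r$, which is what ``matching the $m_i$'' should mean.
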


\begin{proof} Given $\a$, take $\l$ in $[0,1]$ such that $2=(\frac{2}{\eta^\l})^\a$. Consider a sequence of the form $\o=0^{m_1}(012)^{n_1}0^{m_2}(012)^{n_2}\dots$. Denote the $i$th period $p_i=m_i+3n_i$ and $q_i=3n_i$ the number of steps of rotation of $\o$, and assume both tend to infinity. Suppose moreover that $\frac{q_i}{p_i} \geq \l$ for each $i$ and $\frac{q_i}{p_i} \rightarrow \l$, so that $\sum_{i=1}^j p_i=k_j$ and $\sum_{i=1}^j q_i=\l k_j+o(k_j)$. 

Lemma \ref{localization} of localization allows to use proposition \ref{corupperbound} for $P$ depending on the scale $r$. Indeed, $b_{\G_\o}(r)$ depends only on $\o_0,\dots,\o_k$  for $k=\log_2(r)$, for which $p_i \leq P(k)$, so that:
$$\log b_{\G_\o}(r) \leq A^{P(k)}r^\a \leq r^{\a+\e}, $$
as soon as $P(k) \leq \e \log_A(r)$. In particular, if $\o$ is chosen such that $P(k)=o(\log(r))=o(k)$, the required upper bound holds: $\overline{\a}(\G_\o) \leq \a$.

Concerning lower estimates, the word $w_{k_j}$ introduced in proposition \ref{somecontraction} has length bounded by (lemma \ref{spectralradii}):
$$|w_{k_j}| \leq C^{\e'k_j}\frac{2^{\sum_{i=1}^j p_i}}{\eta^{\sum_{i=1}^j q_i}} \leq C^{2\e'k_j}\left( \frac{2}{\eta^\l}\right)^{k_j} \leq \left( \frac{2}{\eta^\l} \right)^{k_j(1+\e)}.$$
Now lemma \ref{actgro} ensures, for $r_j=\left( \frac{2}{\eta^\l} \right)^{k_j(1+\e)}$:
$$r_j^{\frac{\a}{1+\e}} = 2^{k_j} \leq s(w_{k_j}) \leq \log b_{\G_\o}(|w_{k_j}|) \leq \log b_{\G_\o}\left( \left( \frac{2}{\eta^\l} \right)^{(1+\e)k_j} \right)= \log b_{\G_\o}(r_j).$$
Interpolating for $r_j \leq r \leq r_{j+1}$ gives:
$$\log b(r) \geq \log b(r_j) \geq r_j^{\frac{\a}{1+\e}}=r_{j+1}^{\frac{\a}{1+\e}}\left( \frac{2}{\eta^\l} \right)^{-\a p_j} \geq r^{\frac{\a}{1+\e}} \left( \frac{2}{\eta^\l} \right)^{-\a p_j} \geq r^{\a-2\e}, $$
where the last inequality holds for large $r$ since $p_{j+1} \leq P(k)=o(k)=o(\log r)$. As $\e$ is arbitrary, $\underline{\a}(\G_\o) \geq \a$ for any such sequence $\o$.
\end{proof}

\begin{remark} 
Obviously, the computation of the exact growth exponent $\a(G)=\a$ does not imply that $b_G(r) \simeq e^{r^\a}$. The precise estimates obtained with the proof above are (for $r_j \leq r \leq r_{j+1}$):
$$C^{-(j+p_j+e(j))}r^\a \leq \log b_{\o}(r) \leq r^\a A^{p_{j+1}}, $$
where $e(j)= (\sum_{i=1}^jq_i)-\l k_j =o(j)$ is the error on the rationnal approximation of $\l$ by greater values. Taking $p_i$ of the order $\log i$, and thus $j$ of the order $\frac{\log r}{\log \log r}$, one obtains for some $A$:
$$ r^{\a- \frac{A}{\log\log r}} \leq \log b_\o(r) \leq r^{\a+\frac{A\log\log r}{\log r}}, $$
and taking $p_i$ of the order $i^\theta$ for $0<\theta<1$, thus $j$ of order $(\log r)^{\frac{1}{\theta+1}}$, one obtains:
$$r^{\a-A(\log r)^{-\frac{\theta}{\theta+1}}} \leq \log b_\o(r) \leq r^{\a+A(\log r)^{-\frac{1}{\theta+1}}}. $$
\end{remark}

\section{Oscillation phenomena}\label{section oscillation}

\subsection{Groups with oscillating logarithmic growth exponents} The oscillation of logarithmic exponents of growth function is the phenomenon that underlies the construction of antichains of growth function in section 7 of \cite{Gri1} and of ``fast intermediate'' growth in \cite{Ers3}. It was studied for its own interest in the second chapter of \cite{Bri}. Theorem \ref{precisegrowth} allows a better understanding.

\begin{theorem}\label{theoremoscillation}
For any $\a \leq \b \in [\a_0,1]$, there exists a sequence $\o(\a,\b)$ such that $\underline{\a}(\G_{\o(\a,\b)})=\a$ and $\overline{\a}(\G_{\o(\a,\b)})=\b$, i.e.
$$\liminf \frac{\log \log b_{\G_{\o(\a,\b)}}(r)}{\log r} = \a \textrm{ and } \limsup \frac{\log \log b_{\G_{\o(\a,\b)}}(r)}{\log r} = \b.$$
\end{theorem}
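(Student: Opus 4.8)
The plan is to build the sequence $\o(\a,\b)$ by concatenating ever longer blocks, alternating between two "regimes": a regime where the local rotation ratio $q/p$ equals the value $\l_\b$ associated to $\b$ (via $2=(2/\eta^{\l_\b})^\b$), and a regime where it equals the value $\l_\a$ associated to $\a$. Concretely, I would take $\o$ of the form
$$\o = B_1 A_1 B_2 A_2 B_3 A_3 \dots,$$
where each $B_j$ is a long finite word with rotation ratio $\l_\b$ (built from blocks $0^m(012)^n$ with $3n/(m+3n)\to\l_\b$, exactly as in the proof of Theorem~\ref{precisegrowth}), and each $A_j$ is an even longer finite word with rotation ratio $\l_\a$ (since $\a\le\b$ we have $\l_\a\le\l_\b$, so this is consistent with $\o$ being rotating and with the constraint $\l_\a,\l_\b\le 1$). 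The lengths must grow fast enough that when one has just finished reading a $B_j$-block, that block dominates everything seen so far, and similarly for $A_j$-blocks; a doubly-exponential growth of block lengths, say $|A_j|,|B_j|$ of order $2^{2^j}$, will do.

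Next I would prove the two one-sided estimates. For the $\limsup$: whenever $r=\log_2$ of the length reached at the end of a $B_j$-block, the prefix of $\o$ used to describe $B_{\G_\o}(r)$ (by Lemma~\ref{localization}) has its rotation ratio close to $\l_\b$ — the long $B_j$ block swamps the bounded-below contribution of the earlier $A_i$'s — so Proposition~\ref{somecontraction} together with Lemma~\ref{spectralradii} gives a word of activity $2^{k}$ and length roughly $(2/\eta^{\l_\b})^{k}$, forcing $\log b_{\G_\o}(r)\ge r^{\b-\e}$ along this subsequence; hence $\overline\a(\G_\o)\ge\b$. The matching upper bound $\overline\a(\G_\o)\le\b$ holds at every scale because the local ratio of $\o$ is always $\ge\l_\a$ but, more importantly, is bounded below by a quantity whose associated exponent never exceeds $\b$ — here I must be slightly careful: Proposition~\ref{corupperbound} needs a uniform lower bound $q(i)/p(i)\ge\l$ on \emph{all} windows, so I would choose the $A$-regime ratio to be exactly $\l_\a$ and observe that any window straddling a $B$–$A$ junction still has ratio $\ge\l_\a$, giving $\overline\a(\G_\o)\le\a$... which is wrong. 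So instead: the upper bound I actually get everywhere is governed by the \emph{minimum} local ratio, namely $\l_\a$, yielding only $\overline\a\le\b$ is false too. Let me restate: the correct symmetric statements are $\overline\a(\G_\o)\le$ (exponent of the largest local ratio seen cofinally) and $\underline\a(\G_\o)\ge$ (exponent of the ratio realized on a cofinal set of "good scales"). So the $B$-blocks (ratio $\l_\b$, larger) produce the $\limsup=\b$, and the $A$-blocks (ratio $\l_\a$, smaller) produce the $\liminf=\a$; the upper bound $\overline\a\le\b$ comes from the fact that $\l_\b$ is the largest ratio ever used, applied via a scale-dependent $P$ as in Theorem~\ref{precisegrowth}, and the lower bound $\underline\a\ge\a$ comes from Proposition~\ref{somecontraction} applied at the end of each $A_j$-block, where that block dominates.

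**The main obstacle** is proving that the $\liminf$ is not pushed above $\a$ by the activity accumulated inside the $B$-blocks: one must show that immediately after a long low-ratio $A_j$-block, the cheap contraction there really does force the growth down to $r^{\a+\e}$, i.e. that the upper-bound machinery of Lemma~\ref{growthlemma} and Proposition~\ref{corupperbound} can be localized to the tail $\s^{N}\o$ starting at the beginning of $A_j$. This requires combining Proposition~\ref{comparaisonfinie} (to pass from $\G_\o$ to $\G_{\s^N\o}$ at a cost $2^{2^N}(\cdot)^{2^N}$, negligible if $N=o(\log r)$) with the scale-dependent application of Proposition~\ref{corupperbound} to $\s^N\o$ over its first $o(\log r)$ symbols. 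I would therefore impose that $|B_j| = o(|A_j|)$ in the sense that the position $N_j$ where $A_j$ starts satisfies $N_j = o(\log(\text{length reached at end of }A_j))$; the doubly-exponential schedule makes this automatic. Dually, the same comparison argument handles the $\limsup$ upper bound. Once these localization bookkeeping steps are in place, the quantitative estimates are exactly those already proved in Theorem~\ref{precisegrowth}, applied separately along the two subsequences of scales, and $\e\to0$ finishes the argument.
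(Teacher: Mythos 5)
Your plan --- splice long blocks that locally realize exponent $\a$ with long blocks that locally realize $\b$, use localization so each block's exponent dominates at a suitable scale, and grow the block lengths fast enough that each new block swamps everything before it --- is exactly the paper's strategy, which uses $\o(\a,\b)=\o(\a)_{|0\dots m_1}\o(\b)_{|m_1+1\dots n_1}\o(\a)_{|n_1+1\dots m_2}\dots$ together with Proposition~\ref{comparaisonfinie} and Lemma~\ref{localization} to import the estimates of Theorem~\ref{precisegrowth} as a black box. But your sketch contains two concrete errors. First, the ordering $\l_\a\le\l_\b$ is backwards: from $2=(2/\eta^\l)^\a$ and $\log\eta<0$, the exponent $\a$ is a \emph{decreasing} function of $\l$, so $\a\le\b$ forces $\l_\a\ge\l_\b$. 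More contraction means more rotations, i.e.\ \emph{larger} $\l$ and \emph{smaller} exponent. This propagates through your ``restated'' paragraph: $\overline{\a}$ is governed by the \emph{smallest} (not largest) local ratio seen cofinally, and $\l_\b$ is the \emph{smallest} ratio ever used, not the largest. As written, your mechanism for $\overline{\a}\le\b$ would actually yield $\overline{\a}\le\a$.

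Second, and more substantively: ``the lower bound $\underline{\a}\ge\a$ comes from Proposition~\ref{somecontraction} applied at the end of each $A_j$-block'' is a mismatch of tools. Applying Proposition~\ref{somecontraction} along a subsequence of scales $r_j$ gives $\log b(r_j)\ge r_j^{\a-\e}$, which is a statement about the $\limsup$, not the $\liminf$. To prove $\underline{\a}\ge\a$ you must bound $\log b(r)$ from below for \emph{all} large $r$, not just cofinally --- and with doubly-exponentially spaced $r_j$, monotonicity of $b$ does not interpolate. The correct argument is that the effective rotation ratio of \emph{every} prefix is $\le\l_\a$ (any mix of $\l_\a$- and $\l_\b$-blocks has ratio between $\l_\b$ and $\l_\a$), so Proposition~\ref{somecontraction} with Lemma~\ref{spectralradii} gives $|w_k|\le(2/\eta^{\l_\a})^{k(1+\e)}$ at every $k$, hence $\log b(r)\ge r^{\a-2\e}$ at \emph{all} scales; the ends of $A_j$-blocks are merely where this bound is nearly saturated. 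Finally, the claim that a fixed doubly-exponential schedule ``will do'' needs justification (it requires a quantitative rate of convergence as in Remark~6.2); the paper sidesteps this by constructing the splice points $m_i,n_i$ inductively, choosing each one large enough that Proposition~\ref{comparaisonfinie} makes the relevant one-sided bound visible before the next splice.
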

To ease notations, $b_{\G_\o}(r)=b_\o(r)$ from now on.
\begin{proof}
Take $\o(\a,\b)=\o(\a)_{|0\dots m_1}\o(\b)_{|m_1+1\dots n_1}\o(\a)_{|n_1+1\dots m_2}\o(\b)_{|m_2+1\dots n_2}\dots$ for some sequences $m_i,n_i$ tending to infinity. Such a choice ensures that:
$$\a \leq \underline{\a}(\G_{\o(\a,\b)}) \textrm{ and } \overline{\a}(\G_{\o(\a,\b)}) \leq \b. $$
If $m_i,n_i$ tend to infinity sufficiently fast, these inequalities become equalities. Indeed, take $\e_i \rightarrow 0$, and construct $r_i,r_i'$ such that:
$$\frac{\log \log b(r_i)}{\log r_i} \leq \a+\e_i \textrm{ and } \frac{\log \log b(r_i')}{\log r_i'} \geq \b-\e_i. $$
By localization \ref{localization}, left inequality holds for all $\o_{|0\dots m_i}=\o(\a,\b)_{|0\dots m_i}$ and right inequality for all $\o_{|0\dots n_i}=\o(\a,\b)_{|0\dots n_i}$ with $\log_2 r_i=m_i$ and $\log_2 r_i'=n_i$.

Assume by induction that $m_j,n_j$ are constructed for $j \leq i$ and construct $m_{i+1}=\log r_{i+1}$. Take $\o'=\o(\a,\b)_{|0\dots n_{i}}\o(\a)_{|n_{i}+1\dots}$. By proposition \ref{comparaisonfinie} on asymptotic growth:
$$b_{\o'}(r) \leq 2^{2^{n_i}}b_{\s^{n_i}\o'}(\frac{r}{2^{n_i}}+1)^{2^{n_i}} = 2^{2^{n_i}}b_{\s^{n_i}\o(\a)}(\frac{r}{2^{n_i}}+1)^{2^{n_i}} \leq 2^{2^{n_i}}b_{\o(\a)}(r+2^{n_i+1})^{2^{n_i}},$$ 
so that:
$$\frac{\log\log b_{\o'}(r)}{\log r} \leq \frac{\log \log b_{\o(\a)}(r+2^{n_i+1})+n_i\log 2}{\log r} \simeq \frac{\log\log b_{\o(\a)}(r)}{\log r} \longrightarrow_{r \rightarrow \infty} \a,$$
and there exists $r_{i+1}$ as required. Set $m_{i+1}=\log_2(r_{i+1})$.

Now construct $n_{i+1}= \log_2 (r_{i+1}')$. Take $\o''=\o(\a,\b)_{|0\dots m_{i+1}}\o(\b)_{|m_{i+1}+1\dots}$. Again proposition \ref{comparaisonfinie}:
$$b_{\o''}(r) \geq b_{\s^{m_{i+1}}\o''}(\frac{r}{2^{m_{i+1}}}-1) = b_{\s^{m_{i+1}}\o(\b)}(\frac{r}{2^{m_{i+1}}}-1) \geq \frac{1}{2}b_{\o(\b)}(r-2^{m_{i+1}+1})^{\frac{1}{2^{m_{i+1}}}}, $$
so that:
$$\frac{\log\log b_{\o_{i+1}'}(r)}{\log r} \geq  \frac{\log \log b_{\o(\b)}(r-2^{m_{i+1}+1})-m_{i+1}\log 2}{\log r} \longrightarrow_{r \rightarrow \infty} \b, $$
and there exists $r_{i+1}'$ and $n_{i+1}=\log_2 r_{i+1}'$.
\end{proof}

\subsection{Antichains of growth functions} The following result is an improvement of Theorem 7.2 in \cite{Gri1}, which established the existence of antichains of intermediate growth functions accumulating to $e^r$.

\begin{theorem}\label{antichain}
For any $\a_0 \leq \a < \b \leq 1$, there exists uncountably many groups $\G_\o$ with pairwise non comparable growth functions (such a collection of groups is called an antichain) satisfying $\underline{\a}(\G_\o)=\a$ and $\overline{\a}(\G_\o)=\b$.

Moreover, if $\b < \b' \leq 1$, such an antichain can be chosen so that $b_\o(r) \leq C e^{r^{\b'}}$ for a constant $C$ depending only on $\b,\b'$ and not on $\o$.
\end{theorem}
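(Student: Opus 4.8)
The plan is to build an uncountable family of sequences $\o$ by leaving free an infinite sparse set of "decision positions" while keeping the overall structure of the oscillating sequences $\o(\a,\b)$ from Theorem \ref{theoremoscillation}. More precisely, I would fix rapidly growing integer scales $N_1 \ll N_2 \ll \dots$ and, for each binary parameter $\varepsilon = (\varepsilon_i)_{i\geq 1} \in \{0,1\}^{\N}$, interleave long blocks of $\o(\a)$ and $\o(\b)$ exactly as in the proof of Theorem \ref{theoremoscillation}, but make the choice of \emph{which} of the two exponents is being emphasized on the block $[N_i, N_{i+1}]$ depend on $\varepsilon_i$: when $\varepsilon_i = 0$ one inserts a long $\o(\a)$-block (dragging $\frac{\log\log b_\o(r)}{\log r}$ down near $\a$ at scale $r \approx 2^{N_{i+1}}$), and when $\varepsilon_i = 1$ one inserts a long $\o(\b)$-block (dragging it up near $\b$). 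By Lemma \ref{localization}, the ball $B_{\G_\o}(r)$ depends only on $\o_0\dots\o_{\log_2 r}$, so the growth function on the scale $[2^{N_i}, 2^{N_{i+1}}]$ is entirely controlled by the block chosen at stage $i$. As in Theorem \ref{theoremoscillation}, taking the $N_i$ to grow fast enough (using Proposition \ref{comparaisonfinie} to absorb the prefix contribution) guarantees $\underline{\a}(\G_\o) = \a$ and $\overline{\a}(\G_\o) = \b$ for every such $\o$, regardless of $\varepsilon$.

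The second step is to show that two distinct parameters $\varepsilon \neq \varepsilon'$ give non comparable growth functions. Pick the first index $i$ with $\varepsilon_i \neq \varepsilon'_i$, say $\varepsilon_i = 0$ and $\varepsilon'_i = 1$; then infinitely often (at every later index $i' > i$ where they again differ, which we can force by diagonalization — e.g. restrict to $\varepsilon$ that differ from every earlier-considered parameter infinitely often, or simply note that it suffices to produce the antichain among a well-chosen uncountable subset of $\{0,1\}^\N$) one of the two sequences has a deep $\o(\a)$-block where the other has a deep $\o(\b)$-block. At the corresponding scale $r_{i'} \approx 2^{N_{i'+1}}$ we get $\log b_{\o}(r_{i'}) \leq r_{i'}^{\,\a+\e_{i'}}$ while $\log b_{\o'}(r_{i'}) \geq r_{i'}^{\,\b - \e_{i'}}$, and with the roles of $\o,\o'$ reversed at other scales. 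Since $\a < \b$, for large $i'$ we have $\a + \e_{i'} < \b - \e_{i'}$, so $b_\o(Cr_{i'}) < b_{\o'}(r_{i'})$ cannot hold and vice versa — hence neither $b_\o \precsim b_{\o'}$ nor $b_{\o'} \precsim b_\o$. To actually obtain an \emph{uncountable} antichain rather than just pairwise incomparability on a countable set, I would use a standard device: take an uncountable family of parameters $\{\varepsilon^{(t)}\}_{t \in T}$ that pairwise differ infinitely often (for instance, index by $t \in \{0,1\}^\N$ and set $\varepsilon^{(t)}_i = t_i$ along a sparse subsequence of positions, with fixed alternating values elsewhere to still force both $\liminf$ and $\limsup$ behaviors); any two of them then differ at infinitely many decision positions, giving incomparability as above.

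For the final refinement, the uniform upper bound $b_\o(r) \leq C e^{r^{\b'}}$ for $\b' > \b$: here I would invoke Proposition \ref{corupperbound} together with Lemma \ref{localization} exactly as in the proof of Theorem \ref{precisegrowth}. The sequences $\o$ constructed above are concatenations of finite blocks of $\o(\a)$ and $\o(\b)$; both $\o(\a)$ and $\o(\b)$ have the property that the ratio $q(i)/p(i)$ of rotation steps to period is bounded below by the relevant $\l$, and since $\b < 1$ the corresponding bounded-$P$ hypothesis of Proposition \ref{corupperbound} holds on every window once the blocks are long — more precisely, at scale $r$ only $\o_0\dots\o_{\log_2 r}$ matters, and over that range the local period parameter $P(\log_2 r)$ is $o(\log r)$ by our fast choice of $N_i$, so $\log b_\o(r) \leq A^{P(\log_2 r)} r^{\b} \leq r^{\b'}$ for $r$ large, uniformly in $\o$ (the constant $C$ swallowing the finitely many small scales). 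The main obstacle I anticipate is the bookkeeping in step two: one must choose the decision positions sparse enough that the $N_i$ can simultaneously satisfy (a) the "fast growth" requirements inherited from Theorem \ref{theoremoscillation} needed to pin $\underline{\a}$ and $\overline{\a}$, (b) the requirement that at a differing decision position the gap $r^{\b-\e} / r^{\a+\e}$ genuinely beats any fixed multiplicative constant $C$ in $f(r) \leq g(Cr)$, and (c) uniformity of the $\b'$-bound; all three are compatible because each only asks for the $N_i$ to grow fast enough, but the argument should be organized as a single recursive construction of $(N_i)$ handling all constraints at once, exactly mirroring the induction already carried out in the proof of Theorem \ref{theoremoscillation}.
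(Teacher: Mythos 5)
Your overall strategy — fix a common sequence of scales, build $\o$ from long blocks of $\o(\a)$ and $\o(\b)$ whose placement is governed by an infinite parameter, use Lemma~\ref{localization} and Proposition~\ref{comparaisonfinie} to localize the growth behavior at each scale, and then separate growth functions by finding scales where one is near $e^{r^\a}$ and the other near $e^{r^\b}$ — is exactly the paper's route, and the uniform $\b'$-bound via Proposition~\ref{corupperbound} is also handled the same way.

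However, there is a genuine gap in the step where you produce the \emph{uncountable} family. To get non-comparability of $b_{\o}$ and $b_{\o'}$ you need the parameters $\varepsilon,\varepsilon'$ to differ at infinitely many decision positions \emph{and in both directions} (infinitely often $\varepsilon_i<\varepsilon'_i$ and infinitely often $\varepsilon_i>\varepsilon'_i$), because a single inequality only kills one of the two relations $\precsim$. Your concrete suggestion — ``index by $t\in\{0,1\}^{\N}$ and set $\varepsilon^{(t)}_i=t_i$ along a sparse subsequence, with fixed alternating values elsewhere'' — does not achieve this: two distinct $t,t'$ may differ at only finitely many coordinates (e.g.\ $t=100\cdots$, $t'=000\cdots$), and then $\varepsilon^{(t)}$ and $\varepsilon^{(t')}$ agree outside a finite set, so the corresponding sequences $\o,\o'$ differ in only finitely many positions and the two growth functions need not be incomparable. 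Your alternatives (``restrict to $\varepsilon$ that differ from every earlier-considered parameter infinitely often'', or a vague appeal to ``a well-chosen uncountable subset'') are not constructions; the first builds at most a countable chain, and the second is precisely what needs to be produced.

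The paper closes this gap with a short combinatorial device worth knowing: take functions $\x:\N\times\N\to\{\a,\b\}$ subject to the constraint that $\x$ \emph{alternates} in the second coordinate ($\x(x,y)=\a\Rightarrow\x(x,y+1)=\b$ and vice versa), so that $\x$ is freely determined by its restriction to $\N\times\{0\}$ and the family is uncountable, and then push through a fixed bijection $\f:\N\to\N\times\N$ to get $f_\x=\x\circ\f:\N\to\{\a,\b\}$. If $\x_1\neq\x_2$ they differ at some $(x_0,y_0)$, hence by the forced alternation they differ at \emph{every} $(x_0,y)$ with $y\geq y_0$, in directions that flip with the parity of $y$; this gives infinitely many $i$ with $f_{\x_1}(i)<f_{\x_2}(i)$ and infinitely many with the reverse inequality, which is exactly what the incomparability argument requires. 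That trick, or an equivalent one, is the missing ingredient in your proposal; the rest of your argument would go through once it is supplied.
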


\begin{lemma}\label{lemmaantichain}
Given $\a_0 \leq \a < \b \leq 1$, there exists an application $\o$ from the set $\Fc(\N,\{\a,\b\})$ of functions $f: \N \rightarrow \{\a,\b\}$ to the Cantor space of infinite sequences $\{0,1,2\}^\N$, and there exists sequences $r_i \rightarrow \infty$ and $\frac{\b-\a}{2}>\e_i \rightarrow 0$ such that:
\begin{enumerate}
\item $\underline{\a}(\G_{\o(f)})=\a$ and $\overline{\a}(\G_{\o(f)})=\b$,
\item $\frac{\log\log b_{\o(f)}(r_i)}{\log r_i} \geq \b-\e_i$ if $f(i)=\b$,
\item $\frac{\log\log b_{\o(f)}(r_i)}{\log r_i} \leq \a+\e_i$ if $f(i)=\a$.
\end{enumerate}
\end{lemma}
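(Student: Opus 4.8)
The plan is to iterate the construction of Theorem \ref{theoremoscillation}, but to make the switching between the two "speeds" $\a$ and $\b$ be dictated at the $i$-th stage by the value $f(i)$, so that distinct functions $f$ produce genuinely incomparable growth functions. Concretely I would build $\o(f)$ by concatenating blocks: on the $i$-th block, if $f(i)=\b$ I copy a long stretch of $\o(\b)$ (as in the proof of the $\limsup$ part of Theorem \ref{theoremoscillation}), and if $f(i)=\a$ I copy a long stretch of $\o(\a)$; and crucially, between consecutive blocks I always insert a long enough stretch to guarantee that the overall $\liminf$ stays $\a$ and the overall $\limsup$ stays $\b$, regardless of how the $f(i)$ alternate. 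The lengths of these blocks, $m_i$ and $n_i$, are chosen by the same inductive scheme as in Theorem \ref{theoremoscillation}, using Lemma \ref{localization} to fix the ball of radius $r_i$ from finitely many terms of $\o$, and Proposition \ref{comparaisonfinie} to propagate the estimate $\frac{\log\log b_\o(r)}{\log r}$ past the already-chosen initial segment down to the asymptotic value $\a$ (respectively $\b$) of the tail $\o(\a)$ (respectively $\o(\b)$).

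The first step is therefore to set up the block decomposition precisely and run the induction: assuming $m_j,n_j$ (hence the first $n_i$ terms of $\o(f)$, which do not depend on $f(i')$ for $i'>i$) are constructed for $j\le i$, I apply Proposition \ref{comparaisonfinie} to $\o' = \o(f)_{|0\dots n_i}\,\o(\a)_{|n_i+1\dots}$ to get
$$\frac{\log\log b_{\o'}(r)}{\log r} \le \frac{\log\log b_{\o(\a)}(r+2^{n_i+1})+n_i\log 2}{\log r} \xrightarrow[r\to\infty]{} \a,$$
which yields an $r_{i+1}$ with $\frac{\log\log b_{\o'}(r_{i+1})}{\log r_{i+1}}\le \a+\e_{i+1}$; and symmetrically, applied to $\o'' = \o(f)_{|0\dots m_{i+1}}\,\o(\b)_{|m_{i+1}+1\dots}$, a point $r_{i+1}'$ with the lower bound $\ge \b-\e_{i+1}$. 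Setting $m_{i+1}=\log_2 r_{i+1}$, $n_{i+1}=\log_2 r_{i+1}'$ closes the induction. By localization the value $\frac{\log\log b_{\o(f)}(r_i)}{\log r_i}$ depends only on $\o(f)_{|0\dots \log_2 r_i}$, which by construction equals either an initial segment of $\o(\a)$-type or of $\o(\b)$-type according to $f(i)$; this gives (2) and (3). Item (1) follows because the inserted "guard" stretches force $\liminf=\a$ and $\limsup=\b$ exactly as in Theorem \ref{theoremoscillation}: the points $r_i$ with $f(i)=\a$ are cofinal and push the $\liminf$ down to $\a$, those with $f(i)=\b$ push the $\limsup$ up to $\b$, and Theorem \ref{precisegrowth} together with Proposition \ref{corupperbound} (via localization, with $P=o(\log r)$) provides the matching reverse bounds $\underline\a\ge\a$, $\overline\a\le\b$.

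The one point needing care — and the main obstacle — is ensuring that the map $f\mapsto\o(f)$ is well defined and injective in the strong sense needed for an antichain, i.e. that the sequences $r_i$ and $\e_i$ can be chosen \emph{uniformly}, not depending on $f$. This works because the choice of $r_{i+1}$ (hence $m_{i+1}$) depends only on the initial segment $\o(f)_{|0\dots n_i}$, and by induction this segment is determined by $f(1),\dots,f(i)$ alone; so I should organize the induction so that $r_i,\e_i,m_i,n_i$ are fixed once $f(1),\dots,f(i-1)$ are known and are the same for all extensions. Then for $f\ne g$ with $f(i)\ne g(i)$, say $f(i)=\b$, $g(i)=\a$, at the common scale $r_i$ one has $\frac{\log\log b_{\o(f)}(r_i)}{\log r_i}\ge\b-\e_i > \a+\e_i\ge \frac{\log\log b_{\o(g)}(r_i)}{\log r_i}$ since $\e_i<\frac{\b-\a}{2}$, and by a symmetric argument at some later common scale the inequality reverses; hence neither $b_{\o(f)}\precsim b_{\o(g)}$ nor $b_{\o(g)}\precsim b_{\o(f)}$, which is exactly what the antichain conclusion of Theorem \ref{antichain} will require. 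Uncountability then follows since $\Fc(\N,\{\a,\b\})$ is uncountable and the map is injective on a co-countable (indeed all of it, after collapsing eventually-equal functions) subset.
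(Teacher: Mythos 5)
Your construction is essentially the one in the paper: concatenate blocks of $\o(f(i))$, and choose the block lengths inductively using Lemma~\ref{localization} to freeze the ball of radius $r_i$ and Proposition~\ref{comparaisonfinie} to carry the asymptotic of the appended tail ($\o(\a)$ or $\o(\b)$) down to a concrete scale $r_{i+1}$. Two remarks. First, the ``guard stretches'' you insert between $f(i)$-blocks are not in the paper; there $\o(f)=\o(f(0))_{|0\dots m_0}\o(f(1))_{|m_0+1\dots m_1}\cdots$ with nothing in between, and item~(1) is just a consequence of~(2) and~(3) when $f$ takes both values infinitely often, which is the only case used in the proof of Theorem~\ref{antichain} (the functions $f_\x$ there are alternating in each coordinate). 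Your guards are harmless but add the burden of re-verifying $\underline{\a}\ge\a$ and $\overline{\a}\le\b$ in their presence.

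The substantive gap is in your treatment of uniformity, which you yourself flag as ``the main obstacle.'' You resolve it by organizing the induction so that ``$r_i,\e_i,m_i,n_i$ are fixed once $f(1),\dots,f(i-1)$ are known and are the same for all extensions,'' i.e.\ $r_i$ is a \emph{function} of the first $i-1$ values of $f$. That is not independence of $f$, and it breaks the antichain argument downstream: given $f_{\x_1}\ne f_{\x_2}$, you need a scale $r_i$ where $f_{\x_1}(i)<f_{\x_2}(i)$ \emph{and} a later scale $r_j$ where $f_{\x_2}(j)<f_{\x_1}(j)$, with both $r_i$ and $r_j$ common to the two sequences. Once $f_{\x_1}$ and $f_{\x_2}$ have diverged (which happens by some $i_0\le i$), your $r_j$ for $j>i_0$ depends on which of the two you are building from, so there is no ``later common scale,'' and the ``symmetric argument'' you invoke has no ground to stand on. The fix is what the paper does implicitly when it says ``there exists $r_{i+1}$, independent of $(f(0),\dots,f(i))$'': at step $i+1$ there are only finitely many ($2^{i+1}$) possible prefixes $(f(0),\dots,f(i))$, hence only finitely many pairs $(\o',\o'')$ to control, so one may choose $r_{i+1}$ as the maximum of the thresholds produced by Proposition~\ref{comparaisonfinie} over all of them. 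With that adjustment, $r_i$ and $\e_i$ are genuinely the same for every $f$, and the rest of your argument goes through.
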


\begin{proof}[Proof of theorem \ref{antichain}]
There are uncountably many functions $\x:\N \times \N \rightarrow \{\a,\b \}$ such that $\x(x,y)=\a$ implies $\x(x,y+1)=\b$ and $\x(x,y)=\b$ implies $\x(x,y+1)=\a$. Any bijection $\f:\N \rightarrow \N \times \N$, provides an injection $\x \mapsto f_\x$ by $f_\x(i)=\x \circ \f(i)$. Now given $\x_1 \neq \x_2$, if $f_{\x_1}(i) < f_{\x_2}(i)$, there exists $j >i$ such that $f_{\x_2}(j) < f_{\x_1}(j)$. Lemma \ref{lemmaantichain} ensures that $b_{\o(f_{\x_1})}(r)$ and $b_{\o(f_{\x_2})}(r)$ are not comparable.
\end{proof}

\begin{proof}[Proof of lemma \ref{lemmaantichain}]
The proof of this lemma is a variation on the proof of theorem \ref{theoremoscillation}. Pick:
$$\o(f)=\o(f(0))_{|0\dots m_0}\o(f(1))_{|m_0+1\dots m_1}\o(f(2))_{|m_1+1\dots m_2}\dots $$
for a sequence $m_i=\log_2(r_i)$ increasing sufficiently fast. Mind that this guarantees a uniform upper bound $\frac{\log\log b_{\o(f)}(r)}{\log r} \leq \b+\e=\b'$ for any $\e$ and $r$ big enough (depending on $\e$).

Assume by induction $m_j$ and $r_j$ constructed for $j \leq i$ and consider:
\begin{eqnarray*}
\o' &=& \o(f(0))_{|0\dots m_0}\dots \o(f(i))_{|m_{i-1}+1\dots m_i} \o(\a)_{|m_i+1\dots}, \\
\o'' &=& \o(f(0))_{|0\dots m_0}\dots \o(f(i))_{|m_{i-1}+1\dots m_i} \o(\b)_{|m_i+1\dots}. 
\end{eqnarray*}
As above, proposition \ref{comparaisonfinie} on asymptotic growth provides:
\begin{eqnarray*}
b_{\o'}(r) &\leq& 2^{2^{m_i}} b_{\o(\a)}(r+2^{m_i+1})^{2^{m_i}}, \\
b_{\o''}(r) &\geq& \frac{1}{2} b_{\o(\b)}(r-2^{m_i+1})^{\frac{1}{2^{m_i+1}}}, 
\end{eqnarray*}
so that there exists $r_{i+1}$, independent of $(f(0),\dots,f(i))$, such that:
\begin{eqnarray*}
\frac{\log \log b_{\o'}(r_{i+1})}{\log r_{i+1}} &\leq&  \a+\e_i, \\
\frac{\log \log b_{\o''}(r_{i+1})}{\log r_{i+1}} &\geq&  \b-\e_i,
\end{eqnarray*}
and this is true for any sequence $\o$ coinciding with $\o',\o''$ on the $m_{i+1}=\log_2 r_{i+1}$ first values.
\end{proof}

\begin{remark}
The idea behind the proof of theorem \ref{theoremoscillation}, is that the asymptotic behavior of the growth function $b_\o(r)$ of the group $\G_\o$ depends only on the asymptotic of the defining sequence $\o$, whereas locally a ball of given radius depends only on some first terms of $\o$. This permits to produce scales at which the growth function is essentially $e^{r^\a}$ and others at which it is essentially $e^{r^\b}$, thus explaining oscillation between this two behaviors. Of course, the process can be used to produce a variety of different behaviors at different scales, for instance scales $S_i$ at which $\G_\o$ seems to have growth $e^{r^{\g_i}}$ for countably many $\a_i \in [\a_0,1]$, intertwined with scales $S_j$ at which $\G_\o$ seems to have growth oscillating between $e^{r^{\a_j}}$ and $e^{r^{\b_j}}$. The only point is to allow enough ``time'' so that the behavior at scale $S_i$ or $S_j$ becomes visible, i.e. functions $m_i,n_i$ in the proofs above increasing sufficiently fast. 
\end{remark}

\section{Frequency of oscillations}

This section aims at studying the frequency of oscillations for groups of the type $\G_\o$. The main question is to maximize the frequency of oscillation between two given bounds, or equivalently to minimize the period. Some pseudo-period exponents are defined, and shown to be group invariants. Some estimates on these exponents are then obtained.

\subsection{Group invariants associated to oscillation}

Given $\a <\b$ and a Lipschitz function $b:\N \rightarrow \N$, define the {\it upper set} $U(\a,\b)$ and {\it lower set} $L(\a,\b)$ of $b$ for $\a,\b$ to be:
$$U(\a,\b)=\{s \in \N| \frac{\log \log b(s)}{\log s} \geq \b\} \textrm{ and } L(\a,\b)=\{t \in \N| \frac{\log \log b(t)}{\log t} \leq \a\}. $$
Note that $\frac{\log \log b(s)}{\log s} \geq \b$ is equivalent to $\log b(s) \geq s^{\b}$ and $\frac{\log \log b(t)}{\log t} \leq \a$ is equivalent to $\log b(t) \leq t^{\a}$.

\begin{property}
Let $\a<\b$ and $b:\N \rightarrow \N$ be a Lipschitz function, then:
\begin{enumerate}
\item $L(\a,\b) \sqcup U(\a,\b) \subset \N$, and the inclusion is strict if both upper and lower sets are infinite.
\item Assume $\a'<\a<\b<\b'$ then:
$$L(\a',\b) \subset L(\a,\b) \textrm{ and } U(\a,\b') \subset U(\a,\b), $$
and the inclusions are strict if both upper and lower sets are infinite.
\end{enumerate}
\end{property}

Note that when $b(r)=b_\G(r)$ is the growth function of a finitely generated group $\G$ such that $\underline{\a}(\G) < \a < \b < \overline{\a}(\G)$, then both upper and lower sets are infinite.

Property (1) allows to decompose $U= \bigsqcup_{j=0}^\infty U_j$ and $L= \bigsqcup_{j=0}^\infty L_j$ such that:
\begin{enumerate}
\item $U_i,L_i$ are non empty,
\item for any $s \in U_i$, then $s \geq \max \cup_{j \leq i-1}L_j$ and $s \leq \min \cup_{j \geq i} L_j$,
\item for any $t \in L_i$, then $t \geq \max \cup_{j \leq i} U_j$ and $t \leq  \min \cup_{j \geq i+1} U_j$.
\end{enumerate}
Call this decomposition alternating (see figure \ref{fig2}).

\begin{figure}
\begin{center}

\psset{algebraic=true}
\begin{pspicture}(-0.5,0)(14.5,7)
\psline{->}(-0.5,0.5)(14.5,0.5)
\psline{->}(0,0)(0,7)
\pscurve(0.2,3)(2.5,6.5)(6.8,1.5)(11,6.5)(12,5.8)(13,6.3)(14.5,5.6)
\psline[linestyle=dashed](1.6,6)(1.6,0.5)
\psline[linestyle=dashed](3.6,6)(3.6,0.5)
\psline[linestyle=dashed](10.1,6)(10.1,0.5)
\psline[linestyle=dashed](13.85,6)(13.85,0.5)
\psline[linestyle=dashed](11.65,6)(11.65,5)
\psline[linestyle=dashed](12.5,6)(12.5,5)
\psline[linestyle=dashed](5.75,2)(5.75,0.5)
\psline[linestyle=dashed](7.9,2)(7.9,0.5)
\psplot{0}{14.5}{2}
\psplot{0}{14.5}{6}
\rput*(-0.4,6){$\b$}
\rput*(-0.4,2){$\a$}
\rput*(1.6,0){$s_i$}
\rput*(3.6,0){$s_i'$}
\rput*(10.1,0){$s_{i+1}$}
\rput*(13.7,0){$s_{i+1}'$}
\rput*(5.75,0){$t_i$}
\rput*(7.9,0){$t_i'$}
\rput*(2.6,5.5){$U_i$}
\rput*(6.8,2.5){$L_i$}
\rput*(10.9,5.5){$U_{i+1}$}
\rput*(13.2,5.5){$U_{i+1}$}
\end{pspicture}

\end{center}
\caption{\label{fig2} Upper and lower sets $U(\a,\b)$ and $L(\a,\b)$ seen by drawing the curve $f(r)=\frac{\log \log b(r)}{\log r}$. }
\end{figure}
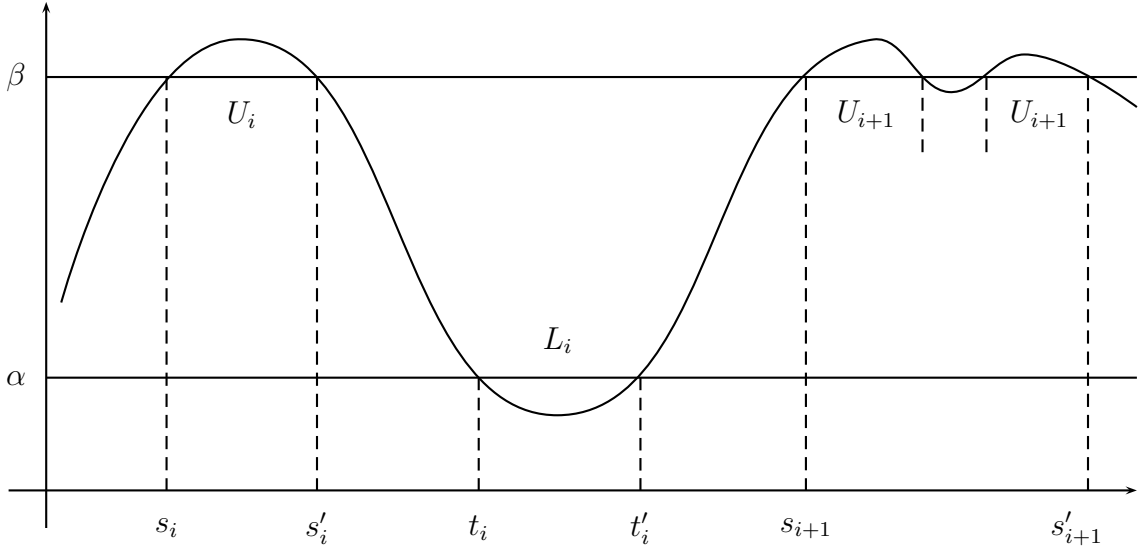

In order to study oscillation, set $s_i=\min U_i$, $s_i'=\max U_i$, $t_i=\min L_i$ and $t_i'=\max L_i$. The {\it upper pseudo period function} $u$ is the partially defined $s_{i+1}=u(t_i')$ and the {\it lower pseudo period function} $l$ is the partially defined $t_i=l(s_i')$. In order to investigate how small these functions can be, define:
$$u_{\a,\b}=\inf \{\n| \exists i_o,\forall i \geq i_o, s_{i+1} \leq (t_i')^\n\} \textrm{ and } l_{\a,\b}=\inf \{\l| \exists i_o,\forall i \geq i_o, t_i \leq (s_i')^\l\}.$$
Equivalently:
$$u_{\a,\b}= \limsup_{i \rightarrow \infty} \frac{\log s_{i+1}}{\log t_i'} \textrm{ and } l_{\a,\b}= \limsup_{i \rightarrow \infty} \frac{\log t_i}{\log s_i'}.$$

The following fact provides estimates on the pseudo period functions that any growth function of infinite group must satisfy.

\begin{fact}\label{trivial}
Consider $\a <\b$ and a function $b:\N \rightarrow \N$, then:
\begin{enumerate}
\item if $b(r)$ is submultiplicative, $u_{\a,\b} \geq \frac{1-\a}{1-\b}>1$,
\item if $b(r)$ is increasing, $l_{\a,\b} \geq \frac{\b}{\a} >1$.
\end{enumerate}
\end{fact}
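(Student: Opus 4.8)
The statement is a pair of lower bounds on the pseudo-period exponents coming purely from the abstract properties of a growth function (submultiplicativity for (1), monotonicity for (2)). The idea is to compare the value of $b$ at the last point $t_i'$ of a lower block $L_i$ with its value at the first point $s_{i+1}$ of the next upper block $U_{i+1}$: by definition we have $\log b(t_i') \leq (t_i')^\a$ and $\log b(s_{i+1}) \geq (s_{i+1})^\b$. The only tool relating these two is the regularity of $b$, and the plan is to push $b(t_i')$ up to scale $s_{i+1}$ using submultiplicativity, then take $\log\log$ and rearrange.

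\emph{Part (1).} First I would fix $i$ large and write $n = \lceil s_{i+1}/t_i' \rceil$, so $n t_i' \geq s_{i+1}$ and $n \leq s_{i+1}/t_i' + 1$. Since $b$ is submultiplicative and (for an infinite group) non-decreasing, $b(s_{i+1}) \leq b(n t_i') \leq b(t_i')^n$. Taking $\log$ twice:
$$ (s_{i+1})^\b \leq \log b(s_{i+1}) \leq n \log b(t_i') \leq n (t_i')^\a, $$
wait — I must be careful: $\log\log b(s_{i+1}) \geq \b \log s_{i+1}$ and $\log b(s_{i+1}) \leq n \log b(t_i') \leq n (t_i')^{\a}$ gives $\log\log b(s_{i+1}) \leq \log n + \a \log t_i'$. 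Hence
$$ \b \log s_{i+1} \leq \log n + \a \log t_i' \leq \a \log t_i' + \log\!\left(\frac{s_{i+1}}{t_i'}+1\right). $$
Now divide by $\log t_i'$ and let $i \to \infty$ along a subsequence realizing $\limsup \frac{\log s_{i+1}}{\log t_i'} = u_{\a,\b} =: \nu$. The term $\log(s_{i+1}/t_i'+1)/\log t_i'$ behaves like $(\nu - 1)$ in the limit (using $s_{i+1}/t_i' \to \infty$, valid because $\nu > 1$ is forced — or one treats $\nu \le 1$ separately as it makes the claimed inequality vacuous), so we get $\b \nu \leq \a + (\nu - 1)$, i.e. $\nu(\b-1) \leq \a - 1$, i.e. $\nu \geq \frac{1-\a}{1-\b}$. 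Since $\a < \b < 1$ this is $> 1$.

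\emph{Part (2).} This is the symmetric, easier estimate using only monotonicity, comparing $s_i'$ (end of an upper block, so $\log b(s_i') \geq (s_i')^\b$) with $t_i = l(s_i')$ (start of the next lower block, so $\log b(t_i) \leq (t_i)^\a$). Since $t_i \geq s_i'$ and $b$ is increasing, $(s_i')^\b \leq \log b(s_i') \leq \log b(t_i) \leq (t_i)^\a$; taking logarithms, $\b \log s_i' \leq \a \log t_i$, so $\frac{\log t_i}{\log s_i'} \geq \frac{\b}{\a}$, and passing to the $\limsup$ gives $l_{\a,\b} \geq \frac{\b}{\a} > 1$.

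The only slightly delicate point is the asymptotic handling of the $\log(s_{i+1}/t_i'+1)$ term in Part (1): one should note $s_{i+1} \geq t_i'$ always, so $\log(s_{i+1}/t_i' + 1) \leq \log 2 + \log(s_{i+1}/t_i') = \log 2 + \log s_{i+1} - \log t_i'$, substitute this in, divide by $\log t_i'$ and take $\limsup$, obtaining $\b\nu \leq \a + 1\cdot(\nu - 1)$ directly without needing $s_{i+1}/t_i' \to \infty$ as a separate hypothesis — this cleanly gives the bound in all cases. Everything else is routine; I would also remark at the start that for an infinite finitely generated group $b$ is automatically strictly increasing and submultiplicative, so both hypotheses are met, and that when $\underline\a(\G) < \a < \b < \overline\a(\G)$ the sets $U$ and $L$ are infinite so the $s_i, s_i', t_i, t_i'$ and the functions $u, l$ are all defined for infinitely many $i$.
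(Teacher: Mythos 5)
Your proof is correct and follows essentially the same route as the paper: for~(1), iterate submultiplicativity to push $\log b$ from scale $t_i'$ up to scale $s_{i+1}$, then compare exponents; for~(2), use monotonicity directly to compare $\log b(s_i')$ with $\log b(t_i)$. Your treatment of the ceiling $n=\lceil s_{i+1}/t_i'\rceil$ and the resulting $\log 2$ error term is a careful filling-in of a rounding step that the paper's more terse argument (which simply writes $s=kt$) glosses over, but the underlying idea and the resulting bounds are identical.
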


\begin{proof}
Suppose $\log b(t) \leq t^{\a}$. Submultiplicativity implies $\log b(kt) \leq k t^{\a}$, so that $\log b(kt) \geq (kt)^{\b}$ forces $kt^{\a} \geq (kt)^{\b}$ hence $kt \geq t^{\frac{\b-\a}{1-\b}+1}$. Now suppose $\log b(s) \geq s^{\b}$, then $\log b(t) \leq t^{\a}$ forces $t^{\a} \geq s^{\b}$.
\end{proof}

By property (2), given $\a''<\a'<\b'<\b''$, one has $u_{\a'',\b''} \geq u_{\a',\b'}$ and $l_{\a'',\b''} \geq l_{\a',\b'}$. This permits the:

\begin{definition}
The {\it upper pseudo period exponent} $u(\a,\b)$ and the {\it lower pseudo period exponent} $l(\a,\b)$ of a function $b(r)$ are: 
$$u(\a,\b)=\lim_{\begin{array}{c} \a' \rightarrow \a^+ \\ \b' \rightarrow \b^- \end{array}} u_{\a',\b'}, \textrm{ and } l(\a,\b)=\lim_{\begin{array}{c} \a' \rightarrow \a^+ \\ \b' \rightarrow \b^- \end{array}} l_{\a',\b'}. $$
Remind notation $\a' \rightarrow \a^+$ (respectively $\b' \rightarrow \b^-$) for $\a' \rightarrow \a$ and $\a'>\a$ (respectively $\b' \rightarrow \b$ and $\b' < \b$).
\end{definition}

This definition is appropriate because it permits to define $u(\underline{\a}(\G),\overline{\a}(\G))$ and $l(\underline{\a}(\G),\overline{\a}(\G))$ associated to the growth function $b_\G(r)$ even though the upper and lower sets $U(\underline{\a}(\G),\overline{\a}(\G))$ and $L(\underline{\a}(\G),\overline{\a}(\G))$ may be empty. Also:

\begin{proposition}
The upper and lower pseudo period exponents $u(\a,\b)$ and $l(\a,\b)$ are group invariants.
\end{proposition}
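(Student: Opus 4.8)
The plan is to prove that $u(\alpha,\beta)$ and $l(\alpha,\beta)$ do not depend on the chosen finite generating set. Fix two generating sets $S,S'$ of a finitely generated group $G$ and write $b=b_{G,S}$, $b'=b_{G,S'}$; both are non-decreasing and submultiplicative, and there is $C\ge 1$ with $b(r)\le b'(Cr)$ and $b'(r)\le b(Cr)$ for all $r$. Put $f(r)=\frac{\log\log b(r)}{\log r}$ and $f'(r)=\frac{\log\log b'(r)}{\log r}$. As in the remark preceding the definition of the logarithmic growth exponents, submultiplicativity gives $\log\log b(Cr)\le\log C+\log\log b(r)$, which yields both that $f$ (and likewise $f'$) is slowly varying, $|f(Cr)-f(r)|\le\frac{\log C}{\log r}$, and, combining $b(r)\le b'(Cr)\le b'(r)^C$ with the symmetric estimate, that $|f(r)-f'(r)|\le\frac{\log C}{\log r}=:\varepsilon_r\to 0$. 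Hence it is enough to show $u_b(\alpha,\beta)=u_{b'}(\alpha,\beta)$ and $l_b(\alpha,\beta)=l_{b'}(\alpha,\beta)$ for two such functions.

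Next, fix $\alpha'\in(\alpha,\beta)$, $\beta'\in(\alpha',\beta)$ and a small $\delta>0$ with $\alpha'-\delta>\alpha$, $\beta'+\delta<\beta$, $\alpha'+\delta<\beta'-\delta$. Writing $U_b(\cdot,\gamma)=\{s:\log b(s)\ge s^{\gamma}\}$ and $L_b(\gamma,\cdot)=\{t:\log b(t)\le t^{\gamma}\}$, the bound $|f-f'|\le\varepsilon_r$ shows that, away from finitely many $r$,
$$U_b(\cdot,\beta')\subseteq U_{b'}(\cdot,\beta'-\delta),\qquad L_b(\alpha',\cdot)\subseteq L_{b'}(\alpha'+\delta,\cdot),$$
$$U_{b'}(\cdot,\beta'+\delta)\subseteq U_b(\cdot,\beta'),\qquad L_{b'}(\alpha'-\delta,\cdot)\subseteq L_b(\alpha',\cdot).$$
So the pair $(U,L)$ governing $u_{b;\alpha',\beta'},\,l_{b;\alpha',\beta'}$ is nested between the pairs governing $u_{b';\alpha'+\delta,\beta'-\delta},\,l_{b';\alpha'+\delta,\beta'-\delta}$ and $u_{b';\alpha'-\delta,\beta'+\delta},\,l_{b';\alpha'-\delta,\beta'+\delta}$. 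The core step is to re-run, now across the two functions $b,b'$ (using these inclusions in place of property~(2)), the analysis of the alternating decompositions that already gives $u_{\alpha'',\beta''}\ge u_{\alpha',\beta'}$ and $l_{\alpha'',\beta''}\ge l_{\alpha',\beta'}$ for a single function when $\alpha''<\alpha'<\beta'<\beta''$: shrinking the pair of sets can only fuse consecutive blocks and never split one, so the up-gaps $(t_i',s_{i+1})$ and down-gaps $(s_i',t_i)$ can only grow. Slow variation of $f$ is what makes this precise, since it forces block endpoints to the very edge of the band ($f(t_i')\to\alpha'$, $f(s_{i+1})\to\beta'$ along the extremal subsequence), so that a block of one configuration lies inside a single gap of the other after the band is shifted by $2\delta\gg\sup_{r\ge R}\varepsilon_r$. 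This gives
$$u_{b';\alpha'+\delta,\beta'-\delta}\ \le\ u_{b;\alpha',\beta'}\ \le\ u_{b';\alpha'-\delta,\beta'+\delta},$$
together with the identical chain for $l$.

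Finally, letting $\alpha'\to\alpha^+$, $\beta'\to\beta^-$ and $\delta\to 0$ while keeping $\alpha'\pm\delta>\alpha$ and $\beta'\pm\delta<\beta$, the two outer terms $u_{b';\alpha'\mp\delta,\beta'\pm\delta}$ both converge to $u_{b'}(\alpha,\beta)$ by definition of the limit, while the middle term converges to $u_b(\alpha,\beta)$; hence $u_b(\alpha,\beta)=u_{b'}(\alpha,\beta)$, and the same argument gives $l_b(\alpha,\beta)=l_{b'}(\alpha,\beta)$. I expect the main obstacle to be precisely the core step: at a single fixed band an arbitrarily small perturbation of the function really can create or annihilate a block and so alter $u_{\alpha',\beta'}$; one must check, along the subsequences realising the relevant limsups and using slow variation and the band offset $2\delta$, that no gap of the larger configuration gets subdivided in the smaller one. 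It is exactly this phenomenon that the definition of $u(\alpha,\beta)$ and $l(\alpha,\beta)$ as double limits is designed to absorb.
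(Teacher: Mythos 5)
Your proof follows essentially the same route as the paper's: compare the two functions $b,b'$ via the bi-Lipschitz relation, deduce inclusions of upper and lower sets at a widened band, conclude that the gap ratios $\frac{\log s_{i+1}}{\log t_i'}$ of one configuration dominate those of the other up to error terms that vanish, and let $\a'\to\a^+$, $\b'\to\b^-$. The only cosmetic difference is that you pass immediately to the normalized curve $f(r)=\frac{\log\log b(r)}{\log r}$ and use $|f-f'|\le\frac{\log C}{\log r}\to 0$, while the paper compares the raw sets directly, absorbing the constant $C$ as a dilation $L^{(b')}(\a',\b')\subset C\,L^{(b)}(\a'',\b'')$ and $U^{(b')}(\a',\b')\subset \frac{1}{C}U^{(b)}(\a'',\b'')$ before comparing $\log s_{i+1}$ and $\log t_i'$ with the shifts $\pm\log C$; the two normalizations are equivalent. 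On the one genuinely delicate point — whether, under shrinking of the sets, a gap of one configuration can be subdivided rather than merely enlarged, so that a $b'$-up-gap could sit inside a $b$-\emph{down}-gap and thus be controlled only by the $l$-exponent rather than the $u$-exponent — you are more explicit about the concern than the paper, which asserts the displayed inequality ``there is a $j$ such that\ldots'' without discussing it. Your appeal to slow variation of $f$ and to the double limit $\a'\to\a^+$, $\b'\to\b^-$ as the mechanism that absorbs the finitely many rogue gaps is the correct resolution, but as written it remains a sketch; to close it one should observe that any such rogue $b'$-up-gap $(t,s)$ lying inside a $b$-down-gap forces $f(t)\in(\a',\a'+2\delta)$ and $f(s)\in(\b'-2\delta,\b')$, and argue that the $\limsup$ of their ratios is dominated as $\delta\to 0$, $\a'\to\a^+$, $\b'\to\b^-$. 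In short: correct approach, same as the paper, slightly more careful about the one soft spot, but not yet fully rigorous there — nor, in fairness, is the paper.
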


\begin{proof}
In order to show the exponents are not perturbed by change of generating set, consider a function $b'(r)$ such that there exists $C$ with $b(\frac{r}{C}) \leq b'(r) \leq b(Cr)$.

Then $L^{(b')}(\a',\b')=\{t|\log b'(t) \leq t^{\a'}\} \subset \{t|\log b(\frac{t}{C}) \leq t^{\a'}\} = C\{x| \log b(x) \leq C^{\a'}x^{\a'}\}$. But given any $\a''>\a'$ and $x$ large enough, one has $C^{\a'}x^{\a'} \leq x^{\a''}$, so that if $x$ large enough belongs to $L^{(b')}(\a',\b')$, then $x$ belongs to $CL^{(b)}(\a'',\b'')$. Similarly, for any $\b''<\b'$, large enough $y$ that belong to $U^{(b')}(\a',\b')$ also belong to $\frac{1}{C}U^{(b)}(\a'',\b'')$.

This permits to deduce that there is a $j$ such that:
$$\frac{\log s_{i+1}^{(b')}(\a',\b')}{\log t_i^{'(b')}(\a',\b')} \geq \frac{\log s_{j+1}^{(b)}(\a'',\b'')-\log C}{\log t_j^{'(b)}(\a'',\b'')+\log C} $$
so that $u^{(b')}_{\a',\b'} \geq u^{(b)}_{\a'',\b''}$ for any $\a'<\a''<\b''<\b'$, which implies $u^{(b')}(\a,\b) \geq u^{(b)}(\a,\b)$, and equality holds by symetry. Similar proof for $l(\a,\b)$.
\end{proof}

\begin{remark}\label{period}
Given $\a < \b$, one can similarly define the pseudo period exponent of oscillations for a function $b(r)$, by $p(\a,\b)=\lim p_{\a',\b'}$ for $p_{\a',\b'}=\limsup \frac{\log s_{i+1}}{\log s_i}$, and it is a group invariant for $b_\G(r)$. However, it is not true a priori that replacing $s_i$ by $t_i$, $t'_i$ or $s'_i$ would provide the same exponent. 
\end{remark}

\subsection{Estimates on pseudo-period}

Theorem \ref{precisegrowth} shows that for any $\g \in [\a_0,1]$ there exists a group $\G_{\o(\g)}$ such that:
$$\frac{1}{C_\e} r^{\g-\e} \leq \log b_{\o(\g)}(r) \leq C_\e r^{\g+\e}, $$
where $\e>0$ is arbitrary and $C_\e$ depends only on $\e$.

Suppose that $\log b_\o(t)=t^\a$ for some $t$. This fact depends only on $(\o_i)_{i=0}^{m}$ for $m=\log_2 t$ by localization. Now consider the group $\G_{\o'}$ for the sequence $\o'=\o_0\dots \o_m \o(\g)|_{m+1 \dots}$, for some $\g \geq \b >\a$. By proposition \ref{comparaisonfinie} on asymptotic growth, one has:
$$\log b_{\o'}(s) \geq \frac{1}{2^m}\log b_{\o(\g)}(s-2^{m+1}-\log 2) \geq \frac{1}{C_\e t}(s-2t)^{\g-\e}, $$
so that for any $\b'<\b$ and $\e$ small enough:
$$\min\{s|\log b_{\o'}(s) \geq s^{\b'}\} \leq C_\e t^{\frac{1}{\g-\e-\b'}}+o(t^{\frac{1}{\g-\e-\b'}}). $$

Conversely suppose that $\log b_\o(s)=s^\b$ for some $s$, which depends only on $(\o_i)_{i=0}^{n}$ for $n=\log_2 s$, and consider the group $\G_{\o''}$ for the sequence $\o''=\o_0\dots \o_n \o(\d)|_{n+1 \dots}$ for some $\d \leq \a < \b$. As above, one has:
$$\log b_{\o''}(t) \leq 2^n(\log b_{\o(\d)}(t+2^{n+1})+\log 2) \leq C_\e s (t+2s)^{\d+\e}, $$
so that for any $\a < \a'$ and $\e$ small enough:
$$\min \{t| \log b_{\o''}(t) \leq \t^{\a'} \} \leq C_\e s^{\frac{1}{\a'-\d-\e}}+o(s^{\frac{1}{\a'-\d-\e}}). $$
These two observations show the following (passing to the limits $\a' \rightarrow \a$, $\b' \rightarrow \b$ and $\e \rightarrow 0$):

\begin{proposition}\label{periodbounds}
Given $\a_0 \leq \d \leq \a < \b \leq \g \leq 1$, there exists a sequence
$$\o(\a,\b,\g,\d)=\o(\d)|_{0\dots m_1}\o(\g)|_{m_1+1\dots n_1}\o(\d)|_{n_1+1\dots m_2}\o(\g)|_{m_2+1\dots n_2}\dots$$
such that the group $\G_{\o(\a,\b,\g,\d)}$ satisfies:
$$u(\a,\b) \leq \frac{1}{\g-\b} \textrm{\quad and \quad} l(\a,\b) \leq \frac{1}{\a-\d}. $$
\end{proposition}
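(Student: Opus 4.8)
The plan is to build the block lengths $m_1<n_1<m_2<n_2<\dots$ by induction, exactly in the spirit of the proofs of Theorems \ref{theoremoscillation} and \ref{antichain}: at each step one commits a further finite stretch of $\o$, long enough that the asymptotic behaviour of $\o(\d)$ (respectively $\o(\g)$) has already become visible at the corresponding scales. The two ``observations'' displayed just before the statement are precisely the inductive steps. The $\o'$--computation says that after a scale $t$ with $\log b(t)=t^{\a'}$, completing the sequence by $\o(\g)$ produces a scale $s\le C_\e t^{1/(\g-\e-\b')}(1+o(1))$ with $\log b(s)\ge s^{\b'}$; the $\o''$--computation says that after a scale $s$ with $\log b(s)=s^{\b'}$, completing by $\o(\d)$ produces a scale $t\le C_\e s^{1/(\a'-\d-\e)}(1+o(1))$ with $\log b(t)\le t^{\a'}$.

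For the bookkeeping, fix $\a'$ slightly above $\a$, $\b'$ slightly below $\b$ and $\e>0$ small enough that $\b'>\d+\e$ and $\g-\e>\a'$; this uses $\d\le\a<\b\le\g$, the degenerate cases $\d=\a$ or $\b=\g$ being vacuous since the asserted bound is then $+\infty$. Work with the alternating decomposition $U(\a',\b')=\bigsqcup_i U_i$, $L(\a',\b')=\bigsqcup_i L_i$ and the quantities $s_i=\min U_i$, $s_i'=\max U_i$, $t_i=\min L_i$, $t_i'=\max L_i$. Because the $\o(\g)$--blocks force $\limsup\frac{\log\log b_\o(r)}{\log r}\ge\g>\b'$ and the $\o(\d)$--blocks force $\liminf\le\d<\a'$, these sets are all infinite, so $u_{\a',\b'}$ and $l_{\a',\b'}$ are genuinely $\limsup_i\frac{\log s_{i+1}}{\log t_i'}$ and $\limsup_i\frac{\log t_i}{\log s_i'}$; the targets are $u_{\a',\b'}\le\frac1{\g-\e-\b'}$ and $l_{\a',\b'}\le\frac1{\a'-\d-\e}$, and then $\a'\to\a^+$, $\b'\to\b^-$, $\e\to0$ give $u(\a,\b)\le\frac1{\g-\b}$, $l(\a,\b)\le\frac1{\a-\d}$.

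For the inductive step, suppose $\o$ is fixed through the end of an $\o(\g)$--block at position $n_i$ and that the next block is to be $\o(\d)$; write $\o''$ for the completion by $\o(\d)$ from position $n_i+1$ on. By Proposition \ref{comparaisonfinie} split at level $n_i$ one has $\log b_{\o''}(r)\le 2^{n_i}\big(\log b_{\o(\d)}(r+2^{n_i+1})+\log 2\big)$, which by Theorem \ref{precisegrowth} is $O\!\big(2^{n_i}r^{\d+\e}\big)$ for $r\ge 2^{n_i}$, hence $<r^{\b'}$ once $r>(C\,2^{n_i})^{1/(\b'-\d-\e)}$; therefore $s_i'$ is finite with $\log_2 s_i'=O(n_i)$, so it is already determined by a prefix of $\o$ lying strictly before position $m_{i+1}$. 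Picking $s\approx s_i'$ with $\log b_{\o''}(s)=s^{\b'}$ and applying the $\o''$--observation gives a first scale $\le C_\e s^{1/(\a'-\d-\e)}(1+o(1))$ where $\log b_{\o''}\le(\cdot)^{\a'}$; if we now set $m_{i+1}$ above $\log_2$ of that scale, localization (Lemma \ref{localization}) identifies $b_{\o(\a,\b,\g,\d)}$ with $b_{\o''}$ up to there, so $t_i\le C_\e(s_i')^{1/(\a'-\d-\e)}(1+o(1))$ and $\frac{\log t_i}{\log s_i'}\le\frac1{\a'-\d-\e}+o(1)$. Symmetrically, once the $\o(\d)$--block ends at $m_{i+1}$, Proposition \ref{comparaisonfinie} together with Proposition \ref{somecontraction} and Theorem \ref{precisegrowth} give $\log_2 t_i'=O(m_{i+1})$; picking $t\approx t_i'$ with $\log b(t)=t^{\a'}$, applying the $\o'$--observation, and choosing $n_{i+1}$ above $\log_2$ of the resulting scale $C_\e t^{1/(\g-\e-\b')}(1+o(1))$ yields $s_{i+1}\le C_\e(t_i')^{1/(\g-\e-\b')}(1+o(1))$ and $\frac{\log s_{i+1}}{\log t_i'}\le\frac1{\g-\e-\b'}+o(1)$. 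Iterating over $i$ builds the whole sequence and the two $\limsup$ bounds.

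The step I expect to be the real obstacle is the ``scale accounting'' hidden above: the two observations are stated for a sequence completed by $\o(\d)$ or $\o(\g)$ \emph{forever}, and one must make sure they apply to the genuine sequence $\o(\a,\b,\g,\d)$ at the \emph{specific} scales $s_i'$ and $t_i'$ appearing in the definitions of $l_{\a',\b'}$ and $u_{\a',\b'}$. This is what the a priori bounds $\log_2 s_i'=O(n_i)$ and $\log_2 t_i'=O(m_{i+1})$ are for: they pin $s_i'$ and $t_i'$ to a prefix strictly inside the current block, leaving one free to push the next switch $m_{i+1}$ (resp. $n_{i+1}$) arbitrarily far, and then Lemma \ref{localization} turns ``far enough'' into honest equality of $b_{\o(\a,\b,\g,\d)}$ with $b_{\o''}$ (resp. $b_{\o'}$) up to the scale where the observation's conclusion is felt. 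Everything else is routine: the $o(1)$'s wash out in the $\limsup$, and the three successive limits replace $\frac1{\a'-\d-\e}$ and $\frac1{\g-\e-\b'}$ by $\frac1{\a-\d}$ and $\frac1{\g-\b}$.
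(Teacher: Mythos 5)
Your argument is correct and is essentially the paper's: the two scale observations stated immediately before the proposition (switching to $\o(\g)$ or $\o(\d)$ and invoking Proposition \ref{comparaisonfinie} together with Theorem \ref{precisegrowth}) are exactly the inductive steps for choosing $m_i<n_i$, localization (Lemma \ref{localization}) pins the scales $s_i'$, $t_i$, $t_i'$, $s_{i+1}$ inside the already-committed prefix, and the passage $\a'\to\a^+$, $\b'\to\b^-$, $\e\to 0$ yields the stated inequalities. The paper compresses all of this into the two displayed observations followed by ``these two observations show the following''; your explicit block-building induction and the a priori bounds $\log_2 s_i'=O(n_i)$, $\log_2 t_i'=O(m_{i+1})$ are a faithful expansion of that sketch rather than a different route.
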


The choice of $\o(\a,\b,\g,\d)$ guarantees $\underline{\a}(\G_{\o(\a,\b,\g,\d)}) \geq \d$ and $\overline{\a}(\G_{\o(\a,\b,\g,\d)}) \leq \g$, but these are probably strict inequalities.

Note that the construction of $\o(\a,\b)$ in the proof of theorem \ref{theoremoscillation} is a particular instance of the above proposition with $\g=\b$ and $\d=\a$. In this case, the upper and lower pseudo period exponents are (a priori) infinite.

On the other hand, in order to minimize the upper and lower pseudo period exponents for a fixed oscillation magnitude $\a<\b$, taking $\g=1$ and $\d=\a_0$ gives upper bounds (the lower bounds are trivial from fact \ref{trivial}):
$$\frac{1-\a}{1-\b} \leq u(\a,\b) \leq \frac{1}{1-\b} \textrm{\quad and \quad} \frac{\b}{\a} \leq l(\a,\b) \leq \frac{1}{\a-\a_0}. $$

Since the estimates above are done for any $t$ in $L(\a,\b)$ and $s$ in $U(\a,\b)$, they provide an upper bound for (any choice of definition in remark \ref{period}) pseudo period:
$$\frac{\b(1-\a)}{\a(1-\b)} \leq p(\a,\b) \leq \frac{1}{(1-\b)(\a-\a_0)}. $$

\subsection{Further developments on frequency of oscillations} After this work was first submitted, the results of \cite{BE2} show that there are groups with $u(\a,\b)=\frac{1-\a}{1-\b}$ and $l(\a,\b)=\frac{\b-\a_0}{\a-\a_0}$. Such an upper pseudo-period exponent is optimal by submultiplicativity of growth function. It is not known if such a lower pseudo-period exponent is optimal. 

Similar results concerning oscillations of entropy functions of random walks on groups are obtained in \cite{Bri3}, theorem 5.8.

\end{document}